\newtheorem{theo}{Theorem}[section]
\newtheorem{lemm}[theo]{Lemma}
\newtheorem{prop}[theo]{Proposition}
\newtheorem{coro}[theo]{Corollary}
\newcommand{\bs}{\boldsymbol}{}
\def\pa{0}
\def\pb{1}
\def\pc{2}
\def\pd{3}
\def\pe{4}
\def\pf{5}
\def\pg{6}
\def\ph{7}
\title{Duality transformation formulas for multiple elliptic hypergeometric series of type $BC$}
\author{Yasushi Komori\footnote{
		Department of Mathematics, Rikkyo University, Nishi-Ikebukuro, Toshima-ku, Tokyo 171-8501, Japan. 
		\newline \hspace{14pt} Email:\,komori@rikkyo.ac.jp
	} ,
	Yasuho Masuda\footnote{Department of Mathematics, Kobe University, Rokko, Kobe 657-8501, Japan. 
		\newline \hspace{14pt} Email:\,myasuho@math.kobe-u.ac.jp} ,
	Masatoshi Noumi\footnote{Department of Mathematics, Kobe University, Rokko, Kobe 657-8501, Japan.
		\newline \hspace{14pt} Email:\,noumi@math.kobe-u.ac.jp} }
\date{\ }
\begin{document}
\maketitle
\begin{abstract}
New duality transformation formulas are proposed for multiple elliptic hypergeometric series
of type $BC$ and of type $C$.
Various transformation and  summation formulas are derived as special cases to recover some previously known results.
\end{abstract}
\begin{quote}
	{\small {\sl Key words}\ : 
		Multiple elliptic hypergeometric series; Ruijsenaars\,--\,van Diejen difference operator; transformation formula; summation formula}
	\newline
	{\small{\sl 2010 Mathematics Subject Classification}\ : 33D67(primary); 33E05; 33E20}
\end{quote}
\section{Introduction}
Elliptic hypergeometric series were first introduced by Frenkel\,--\,Turaev \cite{FT1}
in the context of elliptic solutions of the Yang\,--\,Baxter equations.
They found elliptic extensions of the Jackson summation formula, called the Frenkel\,--\,Turaev summation formula,
and the Bailey transformation formula for terminating very well-poised
basic hypergeometric series.
As generalizations of these results,
summation and transformation formulas 
for multiple elliptic hypergeometric series have been 
studied by several authors. 
Warnaar \cite{Wa1} extended
the Frenkel\,--\,Turaev summation formula to a multiple summation formula of type $C_n$.
Also, Rosengren \cite{Ro2} investigated
summation and transformation formulas
associated with classical root systems.

A generalization of the elliptic Bailey transformation to multiple elliptic hypergeometric series,
called the \emph{duality transformation formula} of type $A_{n}$,
was obtained independently by Kajihara\,--\,Noumi \cite{KaN1} and by Rosengren \cite{Ro3}.
A characteristic feature of this formula is that 
the summations of the two sides are taken over multi-indices of possibly 
different dimensions, relevant to different sets of variables.
As a special case it includes  a multiple summation formula of type $A_{n}$ which generalizes the Frenkel\,--\,Turaev summation.
This duality transformation formula can also be interpreted as  a kernel identity for the commuting family of
Ruijsenaars difference operators (\cite{KoNS1}).
We refer the reader to Kajihara \cite{Ka1} for
the duality transformation formula of type $A_n$
for basic hypergeometric series and its relation to Macdonald $q$-difference operators.

In this paper we investigate duality transformation formulas for multiple elliptic hypergeometric series of type $BC_n$ and of type $C_n$.
We formulate two kinds of duality transformations, one on subsets and the other on multi-indices.
In our approach we first establish
duality transformations on subsets combining some kernel identities for Ruijsenaars\,--\,van Diejen difference operators
in one variable and the elliptic Cauchy determinant formula.
Duality transformations on multi-indices are then constructed
from those on subsets through the method of multiple principal specialization as proposed in \cite{KaN1}.
We also discuss some summation formulas which are obtained from our duality transformations.
It would be an important problem to clarify the relation between our results for multiple series
and summation and transformation formulas for elliptic hypergeometric integrals of type $BC_n$ as discussed by
van Diejen\,--\,Spiridonov \cite{vDS1} and Rains \cite{Ra1}. 
In a previous work, 
the second author \cite{M1} derived a duality transformation formula for basic hypergeometric series of type $BC_n$
from a kernel identity for the commuting family of van Diejen $q$-difference operators.
We expect that our transformation formulas can also be understood in a context similar to that of \cite{M1}.
\par\medskip
The present paper is organized as follows. 
We begin by proposing in Section 2 kernel identities for 
Ruijsenaars\,--\,van Diejen difference operators in one variable 
$L(x;\bs{a}|\bs{c})$  of type $BC_1$ and $R(x;\bs{a})$ of type $C_1$.  
In this paper we call a difference operator in $n$ variables 
{\em of type $BC_n$} (resp. {\em of type $C_n$})
if it is invariant under the action of the hyperoctahedral group of degree $n$, 
and contains {\em eight} (resp. {\em four})
parameters 
$\bs{a}=(a_0,a_1,\ldots)$ 
possibly with some constraints. 
In Section 3 we construct a duality transformation formula 
{\em on subsets\/} (Theorem \ref{Theo:BC set}) 
of type $BC_n$ 
applying the product of one variable difference 
operators % $L(x;\bs{a}|c)$ 
to an elliptic version 
of the Cauchy determinant.  We also derive a 
duality transformation on subsets (Theorem \ref{Theo:C set1})
of type $C_n$ from the $BC_n$ case 
by a specialization of parameters. 

We apply the method 
of multiple principal specialization as in Kajihara\,--\,Noumi \cite{KaN1}
to the transformation formulas on subsets 
for constructing duality transformation formulas {\em on multi-indices}.  
Section 4 is devoted to the study of the $(C_m,C_n)$ case (Theorem \ref{Theo:C dual}); it contains 
some remarks on special cases for comparison % of our formula 
with previously known results,
including the summation formula of type $C_n$ by Rosengren \cite{Ro2}.  
As an application of Theorem \ref{Theo:C dual}
we also derive a transformation formula 
of Karlsson\,--\,Minton type (Theorem \ref{Theo:transformation N+r to N+s}), 
which gives a multiple generalization of 
the Karlsson\,--\,Minton type transformation due to Rosengren\,--\,Schlosser \cite{RS1}.  
In Section 5 we propose a duality transformation of type 
$(BC_m,BC_n)$ (Theorem \ref{Theo:BC multi1}).   With an additional constraint on the 
parameters, this formula is further generalized to a duality transformation
intertwining multiple series of different degrees. 

%\tableofcontents
\section{Kernel identities in one variable}
In this section, we introduce a Ruijsenaars\,--\,van Diejen type 
difference operator $L(x; \bs{a} |\bs{c})$ of type $BC_1$ with
complex
parameters $\bs{a} =(a_{\pa}, a_{\pb}, \ldots, a_{\ph})$ and 
$\bs{c}=(c_0, c_1, c_2, c_3)$,
and prove a kernel identity for this operator.
We also derive a kernel identity for a difference operator $R(x; \bs{a})$ of type $C_1$
with four parameters $\bs{a} =(a_{\pa}, a_{\pb}, a_\pc, a_{\pd})$.
\subsection{Ruijsenaars\,--\,van Diejen operators and kernel identities}
Let $[u]$ be a non-zero entire 
odd function in one variable $u$, satisfying the functional equation
\begin{equation}\label{eq:Riemann}
[x \pm u][y \pm  v]-[x \pm v] [y \pm u] =[x \pm y] [u \pm v],
\end{equation} 
where we used an abbreviated notation $[x \pm y]:=[x+y][x-y]$.
We denote by $\Omega$ the set of all zeros of $[u]$.
Then $\Omega$ is a closed discrete subgroup of $\mathbb{C}$.
It is known that such functions are classified, up to constant multiples, into the following three types
by the rank of $\Omega \subset \mathbb{C}$:
\begin{alignat}{4}
& (0)\ \mbox{rational case:} \qquad && e\! \left(a u^2\right)u \qquad &&(\Omega=0),& \nonumber \\ 
& (1)\ \mbox{trigonometric case:} \qquad& & e\! \left(a u^2\right)\sin(\pi u/\omega_1)\qquad && (\Omega=\mathbb{Z}\omega_1),& \nonumber \\
& (2)\ \mbox{elliptic case:}  \qquad && e\! \left(au^2\right)\sigma(u;\Omega)\qquad &&(\Omega=\mathbb{Z}\omega_1\oplus\mathbb{Z}\omega_2),& \label{eq:three cases}
\end{alignat}
where $a \in \mathbb{C}$ and $e(u) = \exp(2\pi \sqrt{-1} u)$.
We denote by $\sigma(u; \Omega)$
the Weierstrass sigma function 
\begin{equation*}
\sigma(u; \Omega) = u \prod_{\omega \in \Omega\backslash\{0\}}
\left(1 - \frac{u}{\omega} \right)
e^{\frac{u}{\omega}+\frac{u^2}{2\omega^2}}
\end{equation*}
associated with the period lattice
$\Omega=\mathbb{Z}\omega_1\oplus\mathbb{Z}\omega_2$, where $\omega_1$ and $\omega_2$ are two $\mathbb{R}$-linearly independent 
complex numbers.
In the following, we confine ourselves to the elliptic case 
$[u] =e\left(au^2\right)\sigma(u;\Omega)$.
In this setting, the function $[u]$ has 
the following quasi-periodicity:
\begin{equation*}
[u + \omega] =
\epsilon_{\omega} e\left(\eta_{\omega} \left(u+\frac{\omega}{2} \right) \right)
[u] \quad (\omega \in \Omega),
\end{equation*}
where 
 $\eta_{\omega} \in \mathbb{C}\ (\omega \in \Omega)$
 and
\begin{equation*}
\epsilon_{\omega} = 
\begin{cases} 
1  &(\omega \in 2\Omega), \\
-1 &(\omega \notin 2\Omega).
\end{cases}
\end{equation*}
We also denote $\omega_3 = -\omega_1-\omega_2, \omega_0 = \omega_4=0$
and set 
$\epsilon_r = \epsilon_{\omega_r}, \eta_r = \eta_{\omega_r}$
for $r=0, 1, 2, 3$. 
In this notation,
the duplication formula for $[u]$ is given by
\begin{equation}\label{eq:double-angle formula}
[2x] = 2[x] \prod_{s=1}^3 
\frac{[x-\frac{1}{2}\omega_s]}
{[-\frac{1}{2}\omega_s]}.
\end{equation}
We remark that the following formula follows from \eqref{eq:double-angle formula}:
\begin{equation}\label{eq:sp of double-angle formula}
\prod_{0 \le s \le 3; s \neq r}[\tfrac{1}{2}(\omega_r-\omega_s) ]
 = \epsilon_r e(\tfrac{1}{2}\eta_r \omega_r )
 \prod_{s=1}^3 [-\tfrac{1}{2}\omega_s ] 
\quad (r=0, 1, 2, 3).
\end{equation}
Fixing a complex parameter $\delta \in \mathbb{C}^*$ such that $\mathbb{Z}\delta \cap \Omega = \{ 0 \}$,
we define the difference operator $L(x;\bs{a}|\bs{c})$ as follows:
\begin{align*}
L(x;\bs{a}|\bs{c}) &= A^+(x; \bs{a})T_x^{\delta} + A^-(x; \bs{a})T_x^{-\delta}+A^0(x; \bs{a}|\bs{c}) ,\\
A^+(x; \bs{a})&= \frac{\prod_{p=\pa}^\ph[x+a_p]}{[2x][2x+\delta]} ,\quad
A^-(x; \bs{a})= \frac{\prod_{p=\pa}^\ph[x-a_p]}{[2x][2x-\delta]} ,\\
A_r^0(x;\bs{a}|c) &= 
\epsilon_r e((\delta - \tfrac{\omega_r}{2} - \tfrac{1}{2} {\textstyle \sum_{p=\pa}^\ph} a_p)\eta_r)
\frac{[x \pm c] \prod_{p=\pa}^\ph[\frac{1}{2}(\omega_r-\delta)+a_p]
}
{2[\frac{1}{2}(\omega_r-\delta) \pm x][\frac{1}{2}(\omega_r-\delta) \pm c]},\\
A^0(x;\bs{a}|\bs{c})&=\sum_{r=0}^3 A^0_r(x;\bs{a}|c_r),
\end{align*}
where $T_{x}^\delta$ stands for the $\delta$-shift operator
$T_x^{\delta} f(x) = f(x+ \delta)$.
Note that this operator is invariant under the sign change $x \rightarrow -x$.
This operator $L(x;\bs{a}|\bs{c})$ is essentially equivalent to the $BC_1$ Ruijsenaars\,--\,van Diejen operator defined by \cite{vD1, KH, Ru2}
(See also \cite{KoNS1}).
We remark that the parameters $\bs{c}=(c_0, c_1, c_2, c_3)$ in $A^0(x; \bs{a}| \bs{c})$ are superfluous.
In fact, from 
\begin{equation*}
\frac{1}{[\frac{1}{2}(\omega_r-\delta) \pm x]}
\left( \frac{[x\pm c]}{[\frac{1}{2}(\omega_r-\delta) \pm c]}
- \frac{[x\pm c']}{[\frac{1}{2}(\omega_r-\delta) \pm c']} \right)
= \frac{[c' \pm c]}{[\frac{1}{2}(\omega_r-\delta) \pm c] [\frac{1}{2}(\omega_r-\delta) \pm c']},
\end{equation*}
we obtain
\begin{equation*}
A^0_r(x; \bs{a}|c)- A^0_r(x; \bs{a}|c')= A^0_r(c'; \bs{a}| c).
\end{equation*}
The difference operator $(2. 13)$ in \cite{KoNS1} with $m=1$
is a special case  $L(x; \bs{a}| \bs{c'})$ where $c'_r = \frac{1}{2}(\omega_r-\delta)+\kappa\ (r=0, 1, 2, 3)$,
and hence, for arbitrary $\bs{c}=(c_0, c_1, c_2, c_3)$  it differs from $L(x; \bs{a}| \bs{c})$
by an additive constant.

For the difference operator $L(x;\bs{a}|\bs{c})$,
the following kernel identity holds.
\begin{theo}\label{Theo:BC_1}
Define the parameters $\bs{b} =(b_{\pa}, b_{\pb}, \ldots, b_{\ph} )$ by $b_{p} = \delta - a_{p}\ (p=\pa, \pb, \ldots, \ph )$.
\\
\noindent
{\rm (1)}\ Under the balancing condition $\sum_{p=\pa}^{\ph}a_p=4 \delta$, the following identity holds\,$:$
\begin{equation}\label{eq:KI m=n=1}
L(x;\bs{a} | \bs{c}) \left(\frac{1}{[x \pm y]} \right) 
= L(y; \bs{b} | \bs{c}) \left(\frac{1}{[x \pm y]} \right) .
\end{equation}
{\rm(2)}\ Under the balancing condition $\sum_{p=\pa}^{\ph} a_p =2\delta$,
the following identity holds\,$:$
\begin{equation}\label{eq:KI m=1 n=0}
L(x;\bs{a}| \bs{a_\pa}) \cdot 1 = \frac{\prod_{p=\pb}^\ph[a_\pa+a_p] }{[2a_\pa +\delta]},
\end{equation}
where $\bs{a_\pa}=(a_\pa, a_\pa, a_\pa, a_\pa)$.
\end{theo}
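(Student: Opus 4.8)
The plan is to prove both identities by the same principle: the two sides, compared as meromorphic functions of $x$, are elliptic functions of $x$ that turn out to be pole-free, hence constant, and the constant is then pinned down by a single evaluation. For part (1) set $g(x)=1/[x\pm y]$ and put $b_p=\delta-a_p$, so that the assignment $(x,\bs{a})\mapsto(y,\bs{b})$ is an involution. Consider $\Phi(x,y)=L(x;\bs{a}|\bs{c})\,g-L(y;\bs{b}|\bs{c})\,g$, which is even in $x$ and in $y$. First I would check that, under $\sum_{p}a_p=4\delta$ (equivalently $\sum_p b_p=4\delta$), $\Phi$ is an elliptic function of $x$: under $x\mapsto x+\omega_i$ the multipliers coming from the eight-factor products $\prod_{p}[x\pm a_p]$ in $A^\pm$ and from the theta quotients in each $A^0_r$ all coincide and reduce to $1$ precisely because of the balancing.

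The heart of the argument is to show that $\Phi$ has no poles in $x$. There are three families of candidate poles modulo $\Omega$. (i) At $x\in\tfrac12\Omega$ the $[2x]$-poles of $A^+(x;\bs{a})T_x^{\delta}g$ and $A^-(x;\bs{a})T_x^{-\delta}g$ cancel: the oddness of $[u]$, the evenness of $g$, the even number of factors $\prod_p[x\pm a_p]$, and $[-\delta]=-[\delta]$ together flip the sign between the two residues. (ii) At $x\equiv\pm\tfrac12(\omega_r-\delta)$ the operator $L(x;\bs{a}|\bs{c})$ is already regular on $g$: the pole of $A^\pm$ arising from $[2x\pm\delta]=[\omega_r]=0$ cancels the pole of $A^0_r$ arising from $[\tfrac12(\omega_r-\delta)\mp x]$. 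This is the step I expect to be the main obstacle. It hinges on the relation $g(\tfrac12(\omega_r-\delta)+\delta)=e(-\delta\eta_r)\,g(\tfrac12(\omega_r-\delta))$, which follows from $2\cdot\tfrac12(\omega_r-\delta)+\delta=\omega_r\in\Omega$ and the quasi-periodicity of $[u]$; matching this against the explicit phase $\epsilon_r e((\delta-\tfrac{\omega_r}{2}-\tfrac12\sum_p a_p)\eta_r)$ in $A^0_r$ (comparing the derivative of $[u]$ at $0$ and at $\omega_r$ via $[u+\omega_r]=\epsilon_r e(\eta_r(u+\tfrac{\omega_r}{2}))[u]$) works out exactly because $\sum_p a_p=4\delta$. (iii) At the interaction points $x\equiv\pm y$ and $x\equiv\pm y\pm\delta$ the residues of the two sides match: since $b_p=\delta-a_p$ gives $y\pm b_p=y\pm\delta\mp a_p$, the numerators produced by the shift terms on the two sides coincide, while at $x\equiv\pm y$ one uses $A^0(y;\bs{a}|\bs{c})=A^0(y;\bs{b}|\bs{c})$, which holds under balancing by applying quasi-periodicity to $\prod_p[\tfrac12(\omega_r-\delta)+a_p]$.

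Once $\Phi$ is elliptic and pole-free in $x$ it is constant in $x$; the $(x,\bs{a})\leftrightarrow(y,\bs{b})$ symmetry (or rerunning the argument in $y$) makes it an absolute constant, which I would evaluate at a convenient specialization to conclude it vanishes, giving \eqref{eq:KI m=n=1}. For part (2) note that $T_x^{\pm\delta}\cdot1=1$, so $L(x;\bs{a}|\bs{a_\pa})\cdot1=A^+(x;\bs{a})+A^-(x;\bs{a})+\sum_{r=0}^{3}A^0_r(x;\bs{a}|a_\pa)$. By the same reasoning, now with source $g\equiv1$ and balancing $\sum_p a_p=2\delta$, this is elliptic in $x$ with no poles: the cancellation at $x\equiv\pm\tfrac12(\omega_r-\delta)$ now reduces to $1=e((\delta-\tfrac12\sum_p a_p)\eta_r)$, which holds precisely because $\sum_p a_p=2\delta$, so the expression is constant in $x$. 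To identify the constant I would evaluate at $x=a_\pa$: there every $A^0_r$ vanishes through its factor $[x\pm a_\pa]$, $A^-$ vanishes through its factor $[x-a_\pa]$, and $A^+(a_\pa;\bs{a})=\prod_{p=\pb}^{\ph}[a_\pa+a_p]/[2a_\pa+\delta]$ after the factor $[a_\pa+a_\pa]=[2a_\pa]$ cancels the $[2a_\pa]$ in the denominator, which is exactly the right-hand side of \eqref{eq:KI m=1 n=0}.
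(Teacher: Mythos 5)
Your strategy is a Liouville-type argument and is genuinely different from the paper's. The paper proves (1) by an explicit elliptic partial-fraction identity: Proposition \ref{prop:partial fraction} applied to a suitable function $F(z)$ with $c=0$ yields Lemma \ref{lemm:general identity}, which upon the specialization $m=4$, $d_r=\tfrac12(\delta-\omega_r)$ and the duplication formula \eqref{eq:double-angle formula} becomes exactly \eqref{eq:KI m=n=1}; part (2) is then obtained from (1) by specializing $y=b_\pa$, $\bs{c}=\bs{b_\pa}$ and shifting $a_\pa\to a_\pa+2\delta$. Your residue analysis is sound where I have checked it: the cancellation at $2x\in\Omega$ between the $A^{+}$ and $A^{-}$ terms, the cancellation at $x\equiv\pm\tfrac12(\omega_r-\delta)$ between $A^{\pm}$ and $A^0_r$ (this is indeed where the explicit phase $\epsilon_r e((\delta-\tfrac{\omega_r}{2}-\tfrac12\sum_p a_p)\eta_r)$ is needed), the matching of residues at $x\equiv\pm y\pm\delta$ via $y\pm b_p=y+\delta\mp a_p$, and the identity $A^0(y;\bs{a}|\bs{c})=A^0(y;\bs{b}|\bs{c})$ under balancing all hold. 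Your direct proof of (2), including the evaluation at $x=a_\pa$, is correct and arguably cleaner than the paper's route through (1).

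There is, however, one step in (1) that fails as written: $\Phi(x,y)=L(x;\bs{a}|\bs{c})g-L(y;\bs{b}|\bs{c})g$ is \emph{not} an elliptic function of $x$, even under the balancing condition. For the term $A^{+}(x;\bs{a})\,g(x+\delta)$ the numerator $\prod_{p}[x+a_p]$ contributes a quasi-period exponent linear in $x$ with coefficient $8\eta_\omega$, while the denominator $[2x][2x+\delta][x+\delta\pm y]$ contributes coefficient $10\eta_\omega$, so this term's multiplier under $x\mapsto x+\omega$ depends on $x$; likewise for the $A^0$ terms. One cannot verify ellipticity of the sum term by term without already knowing the sum vanishes, so the argument stalls at its first step. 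The repair is standard: work instead with $\Psi(x,y)=[x\pm y]\,\Phi(x,y)$, i.e.\ $\Phi/g$. Each term of $\Psi$ is separately elliptic in $x$ precisely under $\sum_{p}a_p=4\delta$, your pole computations carry over unchanged (they concern points $x\not\equiv\pm y$, and the new factor $[x\pm y]$ removes rather than creates poles at $x\equiv\pm y$), and $\Psi$, being an entire elliptic function of $x$, is a constant $C(y)$; this constant vanishes because $\Phi$ is regular at $x=y$ by your step (iii) while $[x-y]$ vanishes there, so no extra ``convenient specialization'' is needed. Note that $\Psi=0$ is, in the special case $m=4$, $d_r=\tfrac12(\delta-\omega_r)$, literally the identity \eqref{eq:general identity} that the paper proves by partial fractions. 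For part (2) your ellipticity claim is correct as stated, since with $g\equiv1$ every term of $A^{+}+A^{-}+A^{0}$ is separately elliptic under $\sum_{p}a_p=2\delta$.
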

We will give a proof of Theorem \ref{Theo:BC_1} in the next subsection.
We now consider the special case 
$\{a_\pe, a_\pf, a_\pg, a_\ph \} = \{-\frac{1}{2}(\omega_r-\delta)\  (r=0, 1, 2, 3)\}$.
Then, from the duplication formula
\eqref{eq:double-angle formula}
we obtain
\begin{equation*}
\prod_{p=\pe}^\ph [x+a_p] 
= \prod_{r=0}^3 [x+\tfrac{\delta}{2} - \tfrac{\omega_r}{2}]
=\frac{[2x+\delta] \prod_{s=1}^3 [-\frac{1}{2}\omega_s]}{2}
\end{equation*}
and
\begin{equation*}
\prod_{p=\pe}^\ph [y+b_p] 
= \prod_{r=0}^3 [-y-\tfrac{\delta}{2} - \tfrac{\omega_r}{2}]
=\frac{[-2y-\delta] \prod_{s=1}^3 [-\frac{1}{2}\omega_s]}{2}.
\end{equation*}
We also note that $A^0(x; \bs{a} | \bs{c}) = A^0(y;\bs{b} | \bs{c})=0$
and $\sum_{p=\pa}^\ph a_p = \sum_{p=\pa}^\pd a_p + 2\delta$.
By this specialization we obtain the kernel identity for a difference operator of type $C$ from Theorem \ref{Theo:BC_1}
after replacing $\delta$ by $\frac{\delta}{2}$.
 
We define the difference operator
$R(x; \bs{a})$ of type $C_1$ with four parameters $\bs{a} = (a_\pa, a_\pb, a_\pc, a_\pd)$ as follows:
\begin{align*}
R(x; \bs{a}) &= B^+(x;\bs{a})T_x^{\delta/2} + B^-(x;\bs{a})T_x^{-\delta/2},\\
B^+(x;\bs{a})&= \frac{\prod_{p=\pa}^\pd[x+a_p]}{[2x]} ,\quad
B^-(x;\bs{a})= -\frac{\prod_{p=\pa}^\pd[x-a_p]}{[2x]}.
\end{align*}
\begin{theo}\label{Theo:C_1}
Define the parameters $\bs{b} =(b_{\pa}, b_{\pb}, b_{\pc}, b_{\pd})$ by $b_{p} = \delta/2 - a_{p}\ 
(p=\pa, \pb, \pc, \pd)$.
\\
\noindent
{\rm (1)}\ Under the balancing condition $\sum_{p=\pa}^\pd{a_p}= \delta$, the following identity holds\,$:$
\begin{equation*} %\label{eq:KI m=n=1 for C}
R(x; \bs{a}) \left(\frac{1}{[x \pm y]} \right) 
= R(y;\bs{b}) \left(\frac{1}{[y \pm x]} \right) .
\end{equation*}
{\rm(2)}\ Under the balancing condition $\sum_{p=\pa}^\pd a_p=0$,
the following identity holds\,$:$
\begin{equation*}
R(x; \bs{a}) \cdot 1 = \prod_{p=\pb}^\pd[a_\pa+a_p].
\end{equation*}
\end{theo}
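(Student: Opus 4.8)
The plan is to deduce Theorem~\ref{Theo:C_1} directly from Theorem~\ref{Theo:BC_1} by performing the parameter specialization $\{a_\pe,a_\pf,a_\pg,a_\ph\}=\{-\tfrac12(\omega_r-\delta)\ (r=0,1,2,3)\}$ introduced above and then replacing $\delta$ by $\delta/2$. The key structural fact to establish is that, under this specialization, the $BC_1$ operator $L$ collapses to a scalar multiple of the $C_1$ operator $R$, where the scalars attached to the $\bs{a}$-side and to the $\bs{b}$-side differ by a sign. Once this is shown, both assertions follow by dividing out a common nonzero constant.

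First I would verify that $L(x;\bs{a}|\bs{c})=\kappa\,R(x;\bs{a})$ with $\kappa=\tfrac12\prod_{s=1}^3[-\tfrac12\omega_s]\neq 0$. The evaluation $\prod_{p=\pe}^\ph[x+a_p]=\tfrac12[2x+\delta]\prod_{s=1}^3[-\tfrac12\omega_s]$ obtained above from the duplication formula \eqref{eq:double-angle formula} cancels the factor $[2x+\delta]$ in the denominator of $A^+(x;\bs{a})$ and leaves exactly $\kappa\,B^+(x;\bs{a})$; the companion evaluation of $\prod_{p=\pe}^\ph[x-a_p]$ turns $A^-(x;\bs{a})$ into $\kappa\,B^-(x;\bs{a})$. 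Moreover $A^0(x;\bs{a}|\bs{c})=0$, since for each $r$ the specialized parameter $a_p=-\tfrac12(\omega_r-\delta)$ contributes the vanishing factor $[\tfrac12(\omega_r-\delta)+a_p]=[0]=0$ to the numerator $\prod_{p=\pa}^\ph[\tfrac12(\omega_r-\delta)+a_p]$ of $A^0_r$. Repeating the computation on the $\bs{b}$-side, the evaluation $\prod_{p=\pe}^\ph[y+b_p]=\tfrac12[-2y-\delta]\prod_{s=1}^3[-\tfrac12\omega_s]$ carries the extra sign coming from $[-2y-\delta]=-[2y+\delta]$, so that $L(y;\bs{b}|\bs{c})=-\kappa\,R(y;\bs{b})$ (again $A^0(y;\bs{b}|\bs{c})=0$). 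This sign asymmetry between the two sides is the one delicate bookkeeping point in the whole argument.

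Part (1) now follows by substituting these identities into \eqref{eq:KI m=n=1}. Since $\sum_{p=\pe}^\ph a_p=2\delta$ (here $\omega_0=0$ and $\omega_3=-\omega_1-\omega_2$ give $\sum_{r=0}^3\omega_r=0$), the balancing condition $\sum_{p=\pa}^\ph a_p=4\delta$ reduces to $\sum_{p=\pa}^\pd a_p=2\delta$, which after $\delta\mapsto\delta/2$ is exactly $\sum_{p=\pa}^\pd a_p=\delta$. Cancelling $\kappa$ in $\kappa\,R(x;\bs{a})(1/[x\pm y])=-\kappa\,R(y;\bs{b})(1/[x\pm y])$ and using $[y\pm x]=-[x\pm y]$ to rewrite $-1/[x\pm y]=1/[y\pm x]$, together with the linearity of $R(y;\bs{b})$, gives $R(x;\bs{a})(1/[x\pm y])=R(y;\bs{b})(1/[y\pm x])$; the sign from $L(y;\bs{b}|\bs{c})=-\kappa\,R(y;\bs{b})$ and the sign from $[y\pm x]=-[x\pm y]$ conspire to produce precisely the stated right-hand side. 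For part (2) I would substitute into \eqref{eq:KI m=1 n=0} with $\bs{c}=\bs{a_\pa}$, the choice of $\bs{c}$ being immaterial because $A^0=0$: the balancing condition becomes $\sum_{p=\pa}^\pd a_p=0$, and on the right-hand side the factor $\prod_{p=\pe}^\ph[a_\pa+a_p]=\tfrac12[2a_\pa+\delta]\prod_{s=1}^3[-\tfrac12\omega_s]$ cancels the denominator $[2a_\pa+\delta]$, leaving $\kappa\prod_{p=\pb}^\pd[a_\pa+a_p]$; dividing by $\kappa$ and replacing $\delta$ by $\delta/2$ yields $R(x;\bs{a})\cdot 1=\prod_{p=\pb}^\pd[a_\pa+a_p]$. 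The only genuine obstacle is the consistent tracking of these signs; every other step is a direct substitution of the evaluations already prepared above.
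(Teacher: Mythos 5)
Your proposal is correct and follows exactly the route the paper itself takes: the paper derives Theorem~\ref{Theo:C_1} from Theorem~\ref{Theo:BC_1} via the specialization $\{a_\pe,a_\pf,a_\pg,a_\ph\}=\{-\tfrac12(\omega_r-\delta)\}$, the duplication-formula evaluations of $\prod_{p=\pe}^\ph[x+a_p]$ and $\prod_{p=\pe}^\ph[y+b_p]$, the vanishing of $A^0$, and the replacement of $\delta$ by $\delta/2$. Your sign bookkeeping ($L(y;\bs{b}|\bs{c})=-\kappa R(y;\bs{b})$ compensating $[y\pm x]=-[x\pm y]$) and the reduction of the balancing conditions are all accurate and simply make explicit what the paper leaves to the reader.
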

\subsection{Proof of the kernel identity}
We first recall the formula for partial fraction decomposition (see \cite[(11.7.6), p.~332]{GR}).
\begin{prop}\label{prop:partial fraction}
For variables $z, (x_1, \ldots, x_N)$ and $(y_1, \ldots, y_N)$,
we have
\begin{equation*} %\label{eq:partial fraction}
[c] \prod_{j=1}^N\frac{[z-y_j]}{[z-x_j]}
=\sum_{i=1}^N \frac{[z-x_i+c]}{[z-x_i]}
\frac{\prod_{1 \le j \le N}[x_i-y_j]}
{\prod_{1 \le j \le N; j \neq i}[x_i-x_j]},
\end{equation*}
where $c= \sum_{i=1}^N (x_i-y_i)$. 
\end{prop}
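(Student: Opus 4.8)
The plan is to prove the identity by regarding both sides as meromorphic functions of $z$, treating $x_1,\dots,x_N,y_1,\dots,y_N$ (and hence $c=\sum_i(x_i-y_i)$) as parameters, and applying the Liouville-type principle for quasi-elliptic functions. I would set $G(z)$ equal to the left-hand side minus the right-hand side and show that $G(z)$ is holomorphic in $z$ and transforms under $z\mapsto z+\omega$ by a multiplier that forces it to vanish identically. It suffices to argue for generic parameters (in particular with all $x_i$ distinct modulo $\Omega$) and then to invoke analytic continuation.

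First I would match the poles. Both sides have at worst simple poles in $z$, located only at the points $z\equiv x_i\ (\mathrm{mod}\ \Omega)$, since the only denominators are $[z-x_j]$ and $[z-x_i]$. Using that $[u]=e(au^2)\sigma(u;\Omega)$ behaves like $u$ near $u=0$, so $\mathrm{Res}_{z=x_i}\,1/[z-x_i]=1$, the residue of the left-hand side at $z=x_i$ is $[c]\prod_{1\le j\le N}[x_i-y_j]/\prod_{j\ne i}[x_i-x_j]$; on the right-hand side only the $i$-th summand is singular there, and its residue is $[x_i-x_i+c]\prod_{j}[x_i-y_j]/\prod_{j\ne i}[x_i-x_j]=[c]\prod_{j}[x_i-y_j]/\prod_{j\ne i}[x_i-x_j]$. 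The residues agree, so every singularity of $G(z)$ is removable and $G$ is entire.

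Next I would compute the quasi-periodicity. Applying $[u+\omega]=\epsilon_\omega e(\eta_\omega(u+\tfrac{\omega}{2}))[u]$ to every factor, the signs $\epsilon_\omega$ and the $z$-dependent parts of the exponentials cancel between numerator and denominator, and the balancing relation $c=\sum_i(x_i-y_i)$ collapses the surviving exponentials into a single factor: the product $\prod_j[z-y_j]/[z-x_j]$ acquires $e(\eta_\omega\sum_j(x_j-y_j))=e(\eta_\omega c)$, while each summand $[z-x_i+c]/[z-x_i]$ likewise acquires $e(\eta_\omega c)$. Hence both sides, and therefore $G$, satisfy $G(z+\omega)=e(\eta_\omega c)\,G(z)$ for all $\omega\in\Omega$.

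The main obstacle is the final vanishing step. An entire function $G$ with constant multipliers has no zeros in a fundamental parallelogram unless it is identically zero (the integral of $G'/G$ around the boundary cancels on opposite sides), so a nonzero $G$ must be of the form $C\,e^{\lambda z}$, and I must rule out $C\ne 0$. Imposing $e^{\lambda\omega_1}=e(\eta_1 c)$ and $e^{\lambda\omega_2}=e(\eta_2 c)$ and eliminating $\lambda$ yields $c(\eta_1\omega_2-\eta_2\omega_1)\in\Omega$; since the Legendre relation linking $\eta_1,\eta_2,\omega_1,\omega_2$ makes $\eta_1\omega_2-\eta_2\omega_1$ a nonzero constant, this amounts to the constraint $c\in\Omega$, which fails for generic $c$. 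Equivalently, $e(\eta_\omega c)$ is the multiplier system of a degree-zero line bundle on $\mathbb{C}/\Omega$, which carries a nonzero holomorphic section only when it is trivial, i.e. when $c\in\Omega$. Thus $G\equiv 0$ for generic $c$, and since both sides are holomorphic in the parameters subject only to $c=\sum_i(x_i-y_i)$, the identity extends to all admissible parameters by analytic continuation. I expect this last step, disentangling the multiplier from the special relation $c=\sum_i(x_i-y_i)$, to be the only genuinely delicate point, the residue and quasi-periodicity computations being routine.
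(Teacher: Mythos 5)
Your argument is correct. Note first that the paper does not actually prove Proposition 2.3: it simply recalls the formula from Gasper--Rahman \cite[(11.7.6), p.~332]{GR}, so there is no in-paper proof to compare against. Your Liouville-type argument is the standard way to establish this theta-function partial-fraction expansion, and all three steps check out: the residues at $z=x_i$ match because $[u]\sim u$ near $u=0$; each factor $[z-y_j]/[z-x_j]$ picks up $e(\eta_\omega(x_j-y_j))$ and each $[z-x_i+c]/[z-x_i]$ picks up $e(\eta_\omega c)$ under $z\mapsto z+\omega$, so the common multiplier is $e(\eta_\omega c)$ precisely because of the constraint $c=\sum_i(x_i-y_i)$ (this constraint is the whole point of the identity, and you correctly isolate it as the place where the multipliers are reconciled); and the elimination of $\lambda$ via the Legendre relation correctly reduces the triviality of the multiplier system to $c\in\Omega$, which fails generically, with analytic continuation in the parameters handling the degenerate locus. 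One could quibble that you should also note that the poles of the left-hand side at translates $x_i+\omega$ with $\omega\neq 0$ are automatically accounted for by the quasi-periodicity of $G$ once the residues at the representatives $x_i$ are matched, but you implicitly use exactly this and it is routine. In short: a complete and correct proof of a statement the paper outsources to a reference.
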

Applying this formula,
we decompose the function
\begin{equation*}
F(z)= \frac{\prod_{p=0}^{m+3}[z+a_p-\tfrac{1}{2}\delta]}
{[z \pm x-\tfrac{1}{2}\delta][z \pm y +\tfrac{1}{2}\delta]
\prod_{r=0}^{m-1}[z+ d_r-\tfrac{1}{2}\delta]}
\end{equation*}
into partial fractions
with $c= \sum_{p=0}^{m+3} a_p-\sum_{r=0}^{m-1} d_r-2\delta$.
Then by setting $c=0$
we obtain the following lemma.
\begin{lemm}\label{lemm:general identity}
For two sets of parameters $\bs{a} = (a_{\pa}, a_{\pb}, \ldots, a_{m+\pd})$
and $\bs{d}=(d_0, d_1, \ldots, d_{m-1})$ satisfying 
$\sum_{p=\pa}^{m+\pd} a_p- \sum_{r=0}^{m-1} d_r=2\delta$ and 
$d_r \not\equiv d_{r'} \pmod{\Omega}\ (0 \le r, r' \le m-1; r \neq r')$,
we have the following identity\,$:$
\begin{multline}\label{eq:general identity}
\frac{\prod_{p=\pa}^{m+\pd} [x+a_p]}
{[2x] \prod_{r=0}^{m-1} [x+d_r]}
\frac{[x \pm y]}{[(x+\delta) \pm y]}
+\frac{\prod_{p=\pa}^{m+\pd} [-x+a_p]}
{[-2x] \prod_{r=0}^{m-1} [-x+d_r]}
\frac{[x \pm y]}{[(x-\delta) \pm y]} \\
+\frac{\prod_{p=\pa}^{m+\pd} [y+b_p]}
{[2y] \prod_{r=0}^{m-1} [y+e_r]}
\frac{[x \pm y]}{[x \pm (y+\delta)]}
+\frac{\prod_{p=\pa}^{m+\pd} [-y+b_p]}
{[-2y] \prod_{r=0}^{m-1} [-y+e_r]}
\frac{[x \pm y]}{[x \pm (y -\delta)]} \\
+\sum_{r=0}^{m-1} 
\frac{\prod_{p=\pa}^{m+\pd}[a_p-d_r]}
{\prod_{r'=0;r'\neq r}^{m-1} [d_{r'}-d_r]}
\frac{[x \pm y]}{[-d_r \pm x] [-e_r \pm y]} =0,
\end{multline}
where $b_p = \delta- a_p\ (p=\pa, \pb, \ldots, m+3)$ and 
$e_r = \delta- d_r\ (r=0, 1, \ldots, m-1)$.
\end{lemm}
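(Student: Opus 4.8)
The plan is to apply the partial fraction formula of Proposition \ref{prop:partial fraction} directly to $F(z)$, read off the residue at each pole, and then specialize to $c=0$. First I would identify the data entering Proposition \ref{prop:partial fraction}. Writing $F(z)=\prod_{j=1}^{N}[z-y_j]/[z-x_j]$ with $N=m+4$, the numerator zeros $y_j$ are the $m+4$ points $\tfrac12\delta-a_p\ (p=0,\ldots,m+3)$, while the poles $x_j$ are the $m+4$ points $\tfrac12\delta\pm x$, $-\tfrac12\delta\pm y$, and $\tfrac12\delta-d_r\ (r=0,\ldots,m-1)$. For generic $x,y$ these poles are mutually distinct modulo $\Omega$, thanks to the hypothesis $d_r\not\equiv d_{r'}$, so Proposition \ref{prop:partial fraction} applies. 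A short computation of $\sum_j x_j-\sum_j y_j$ then yields $c=\sum_{p=0}^{m+3}a_p-\sum_{r=0}^{m-1}d_r-2\delta$, matching the stated balancing quantity.

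Next I would impose $c=0$, i.e.\ the balancing condition $\sum_p a_p-\sum_r d_r=2\delta$. Since $[u]$ is odd we have $[0]=0$, so the left-hand side $[c]\prod_j[z-y_j]/[z-x_j]$ of Proposition \ref{prop:partial fraction} vanishes identically. At the same time, with $c=0$ each summand on the right reduces to $([z-x_i]/[z-x_i])\,R_i=R_i$, where $R_i=\prod_j[x_i-y_j]/\prod_{j\neq i}[x_i-x_j]$ is the residue factor; the $z$-dependence cancels, leaving the $z$-independent relation $\sum_i R_i=0$.

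It then remains to evaluate the $m+4$ residues and multiply the relation by $[x\pm y]$. For the pole $x_i=\tfrac12\delta+x$ one finds $\prod_j[x_i-y_j]=\prod_p[x+a_p]$ and $\prod_{j\neq i}[x_i-x_j]=[2x]\,[(x+\delta)\pm y]\prod_r[x+d_r]$, so that $[x\pm y]R_i$ is exactly the first term of \eqref{eq:general identity}; the pole $\tfrac12\delta-x$ gives the second. Similarly the poles $-\tfrac12\delta\mp y$ produce the two $y$-terms, after rewriting the numerator factors via $b_p=\delta-a_p$ and the denominator factors via $e_r=\delta-d_r$, and each pole $\tfrac12\delta-d_r$ produces the corresponding summand of the final sum. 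Collecting these contributions and equating the total to zero gives \eqref{eq:general identity} as an identity of meromorphic functions in $x,y$.

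The computation carries no conceptual obstacle; the only delicate point is the bookkeeping of signs. Using the oddness of $[u]$ and the abbreviation $[u\pm v]=[u+v][u-v]$, one must check that the auxiliary signs $(-1)^{m+4}$ arising at the $y$-poles (from the reflections $a_p\mapsto\delta-a_p$ and $d_r\mapsto\delta-d_r$) cancel between numerator and denominator, so that the final normalization is sign-free for every $m$. Verifying this cancellation in each of the five groups of terms is the main, though entirely routine, part of the work.
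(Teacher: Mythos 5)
Your proposal is correct and is precisely the paper's own argument: the authors apply Proposition \ref{prop:partial fraction} to the same function $F(z)$ (with the same $m+4$ poles $\tfrac12\delta\pm x$, $-\tfrac12\delta\pm y$, $\tfrac12\delta-d_r$ and zeros $\tfrac12\delta-a_p$, hence the same $c=\sum_p a_p-\sum_r d_r-2\delta$) and set $c=0$, leaving exactly the residue computation and sign bookkeeping you describe. No discrepancy to report.
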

Note that this formula is invariant under the sign changes $x \rightarrow -x$ and $y \rightarrow -y$.
\par\medskip
We remark that a function of the form
$\frac{[x \pm y]}{[a \pm x] [a' \pm y]}$ with $a \equiv a' \pmod{\Omega}$
can be separated into a sum of two functions,
one depending only on $x$ and the other only on $y$.
In fact, for $\omega \in \Omega$ we have
\begin{align*}
\frac{[x \pm y]}{[a \pm x] [a+\omega \pm y]} &= 
e(-\eta_\omega (2a+\omega) ) \frac{[x \pm y]}{[a \pm x] [a \pm y]} \nonumber \\
&=
e(-\eta_\omega (2a+\omega) ) \left(\frac{[x \pm b]}{[a \pm x] [a \pm b]}
-\frac{[y \pm b]}{[a \pm y] [a \pm b]} \right) .  
\end{align*}
We assume $d_r \equiv e_r \pmod{\Omega}$
in \eqref{eq:general identity}.
Then from $e_r = \delta- d_r\ (r=0, 1, \ldots, m-1)$
we have $2d_r \equiv \delta \pmod{\Omega}$. 
In view of the condition $d_r \not\equiv d_{r'} \pmod{\Omega}\ (0 \le r, r' \le m-1; r \neq r')$
we find that the terms involving the factor 
$\frac{[x \pm y]}{[-d_r \pm x] [-e_r \pm y]}$
in \eqref{eq:general identity} can be separated  
when $m = 4$ and $d_r=\frac{1}{2}(\delta-\omega_r)\ (r=0, 1, 2, 3)$.

We prove Theorem \ref{Theo:BC_1} $(1)$ by using Lemma \ref{lemm:general identity}
for $m=4$ and 
$d_r = \frac{1}{2}(\delta- \omega_r)\ (r=0, 1, 2, 3)$.
From $\sum_{p=\pa}^\ph a_p -\sum_{r=0}^3 d_r = 2 \delta$,
we obtain $\sum_{p=\pa}^\ph a_p = 4\delta$.
We compute each term of \eqref{eq:general identity} in this setting.
From formula \eqref{eq:double-angle formula},
we have
\begin{align*}
\frac{\prod_{p=\pa}^{\ph} [x+a_p]}
{[2x] \prod_{r=0}^3 [x+d_r]}
\frac{[x \pm y]}{[(x+\delta) \pm y]}
&= \frac{\prod_{p=\pa}^{\ph} [x+a_p]}
{[2x] \prod_{r=0}^3 [x+\frac{1}{2}(\delta-\omega_r)]}
\frac{[x \pm y]}{[(x+\delta) \pm y]} \nonumber \\
&=
\frac{2}{\prod_{s=1}^3 [-\frac{1}{2}\omega_s ]}
\frac{\prod_{p=\pa}^{\ph} [x+a_p]}
{[2x] [2x+\delta]}
\frac{[x \pm y]}{[(x+\delta) \pm y]}, \\
\frac{\prod_{p=\pa}^{\ph} [y+b_p]}
{[2y] \prod_{r=0}^3 [y+e_r]}
\frac{[x \pm y]}{[x \pm (y+\delta)]}
&=\frac{\prod_{p=\pa}^{\ph} [y+b_p]}
{[2y] \prod_{r=0}^3 [y+\frac{1}{2}(\delta + \omega_r) ]}
\frac{[x \pm y]}{[x \pm (y+\delta)]} \nonumber \\
&=-\frac{2}{\prod_{s=1}^3 [-\frac{1}{2}\omega_s ]}
\frac{\prod_{p=\pa}^{\ph} [y+b_p]}
{[2y] [2y+\delta]}
\frac{[x \pm y]}{[x \pm (y+\delta)]}.
\end{align*}
By using \eqref{eq:sp of double-angle formula}
the last term in \eqref{eq:general identity} is transformed into
\begin{align*}
&\sum_{r=0}^3 
\frac{\prod_{p=\pa}^{\ph}[a_p-d_r]}
{\prod_{r'=0;r'\neq r}^3 [d_{r'}-d_r]}
\frac{[x \pm y]}{[-d_r \pm x] [-e_r \pm y]} \nonumber\\
&=
\sum_{r=0}^3 
\frac{\prod_{p=\pa}^{\ph}[a_p-\frac{1}{2}(\delta-\omega_r) ]}
{\prod_{r'=0;r'\neq r}^3 [\frac{1}{2}(\omega_r- \omega_{r'})]}
\frac{[x \pm y]}
{[\frac{1}{2}(\omega_r-\delta)\pm x] [\frac{1}{2}(-\delta-\omega_r) \pm y]} \nonumber\\
&=
\sum_{r=0}^3 
\epsilon_r e( (-\delta -\tfrac{\omega_r}{2}) \eta_r)
\frac{ \prod_{p=\pa}^\ph[\frac{1}{2}(\omega_r-\delta)+a_p]}
{\prod_{s=1}^3[-\frac{1}{2}\omega_s]}
\frac{[x \pm c_r]}
{[\frac{1}{2}(\omega_r-\delta) \pm x][\frac{1}{2}(\omega_r-\delta) \pm c_r]} \nonumber \\
&-\sum_{r=0}^3 
\epsilon_r e( (-\delta -\tfrac{\omega_r}{2}) \eta_r)
\frac{ \prod_{p=\pa}^\ph[\frac{1}{2}(\omega_r-\delta)+b_p]}
{\prod_{s=1}^3[-\frac{1}{2}\omega_s]}
\frac{[y \pm c_r]}
{[\frac{1}{2}(\omega_r-\delta) \pm y][\frac{1}{2}(\omega_r-\delta) \pm c_r]}.
\end{align*}
This completes the proof of the identity \eqref{eq:KI m=n=1}.
\par\medskip
We next prove Theorem \ref{Theo:BC_1} $(2)$  by specializing
the variable $y$ and the parameters $\bs{c}=(c_0, c_1, c_2, c_3)$ in \eqref{eq:KI m=n=1} as 
$y=b_\pa$
and $\bs{c}=\bs{b_\pa}=(b_\pa, b_\pa, b_\pa, b_\pa)$.
Then from $A^-(y;\bs{b}) =A^0(y;\bs{b} |\bs{c}) =0$,
we have
\begin{equation}\label{eq:sub of Theo 2.1 (2)}
A^+(x;\bs{a}) \frac{[x \pm (a_\pa-2\delta)]}{[x+\delta \pm (\delta-a_\pa)]}
+A^-(x;\bs{a}) \frac{[x \pm (a_\pa-2\delta)]}{[x-\delta \pm (\delta-a_\pa)]}+
A^0(x;\bs{a}|\bs{b_\pa}) \frac{[x \pm (a_\pa-2\delta)]}{[x \pm (\delta-a_\pa)]}
= A^+(b_\pa; \bs{b}).
\end{equation}
If we replace $a_0$ by $a_0+2\delta$, 
after a non-trivial recombination process of factors 
this formula is translated into the equality
\begin{equation*}
A^+(x;\bs{a}) +A^-(x;\bs{a})+ 
A^0(x;\bs{a}| \bs{a_\pa}) = \frac{\prod_{p=\pb}^\ph[a_\pa+a_p]}{[2a_\pa+\delta]}
\end{equation*}
under the balancing condition $\sum_{p=0}^7 a_p =2\delta$.
For example, the third term of \eqref{eq:sub of Theo 2.1 (2)} is computed as
\begin{align*}
&A^0_r(x;a_0+2\delta, a_1, \ldots, a_7| -\delta-a_0) \frac{[x \pm a_0]}{[x \pm (-\delta-a_0)]}\nonumber \\
&= \epsilon_r e((\delta - \tfrac{\omega_r}{2} - \tfrac{1}{2} {\textstyle \sum_{p=\pa}^\ph} a_p-\delta)\eta_r)
\frac{[x \pm a_0] [\frac{1}{2}(\omega_r-\delta)+a_{\pa}+2\delta]
\prod_{p=\pb}^\ph[\frac{1}{2}(\omega_r-\delta)+a_p]
}
{2[\frac{1}{2}(\omega_r-\delta) \pm x][\frac{1}{2}(\omega_r-\delta) \pm (-\delta-a_0)]} \nonumber\\
&=
\epsilon_r e((\delta - \tfrac{\omega_r}{2} - \tfrac{1}{2} {\textstyle \sum_{p=\pa}^\ph} a_p-\delta)\eta_r)
\frac{[x \pm a_0] 
[\frac{1}{2}\omega_r + \frac{3}{2}\delta+a_{\pa}]
\prod_{p=\pb}^\ph[\frac{1}{2}(\omega_r-\delta)+a_p]
}
{2[\frac{1}{2}(\omega_r-\delta) \pm x] [\frac{1}{2}\omega_r-\frac{3}{2}\delta -a_0]
[\frac{1}{2}\omega_r+\frac{1}{2}\delta +a_0]} \nonumber \\
&=
\epsilon_r e((\delta - \tfrac{\omega_r}{2} - \tfrac{1}{2} {\textstyle \sum_{p=\pa}^\ph} a_p)\eta_r)
\frac{[x \pm a_0] 
\prod_{p=\pb}^\ph[\frac{1}{2}(\omega_r-\delta)+a_p]
}
{2[\frac{1}{2}(\omega_r-\delta) \pm x]
[\frac{1}{2}(\omega_r-\delta) -a_0]} \nonumber\\
&= A_r^0(x;\bs{a}|a_0) .
\end{align*}
This completes the proof of \eqref{eq:KI m=1 n=0}.

\section{Duality transformation formulas on subsets}
In this section, we derive duality transformation formulas of type $BC$ and $C$ 
by combining Theorem \ref{Theo:BC_1} (1) and the Cauchy determinant formula for the function $[u]$
in the spirit of \cite{KaN1}.
\subsection{Duality transformation of type $BC$ on subsets}
For the variables $z=(z_1, \ldots, z_N)$ and $w=(w_1, \ldots, w_N)$,
we investigate an elliptic version of the Cauchy determinant
\begin{equation*}
D(z|w):= \det \left( \frac{1}{[z_i \pm w_j]} \right)_{i, j=1}^N.
\end{equation*}
This determinant is factorized as follows:
\begin{equation}\label{eq:determinant formula}
D(z|w) = (-1)^{\binom{N}{2}}
\frac{\prod_{1 \le i < j \le N}[z_i \pm z_j][w_i \pm w_j ]}
{\prod_{1 \le i , j \le N} [z_i \pm w_j]}.
\end{equation}
Formula \eqref{eq:determinant formula} is proved for example 
by using the functional relation \eqref{eq:Riemann} and Jacobi's identity for determinants.
It is a variant of the elliptic Cauchy determinant due to Frobenius \cite{F}.

By applying the difference operator
$E(z;\bs{a} | \bs{c}):=\prod_{i=1}^N (u+ L(z_i;\bs{a} | \bs{c}))$
with a parameter $u$ to
the determinant $D(z|w)$,
from Theorem \ref{Theo:BC_1} (1) we have 
\begin{align*}
E(z;\bs{a} | \bs{c}) D(z|w) 
&= \det \left( \frac{u}{[z_i \pm w_j]} + L(z_i;\bs{a} | \bs{c}) \frac{1}{[z_i \pm w_j]} \right)_{i, j=1}^N \nonumber \\
&= \det \left( \frac{u}{[z_i \pm w_j]} + L(w_j;\bs{b} | \bs{c}) \frac{1}{[z_i \pm w_j]} \right)_{i, j=1}^N \nonumber \\
&= E(w;\bs{b} | \bs{c}) D(z|w) 
\end{align*}
under the balancing condition $\sum_{p=\pa}^\ph a_p = 4\delta$
and $b_p= \delta -a_p\ (p=\pa, \pb, \ldots, \ph)$.
Since $D(z|w)=(-1)^{N^2} D(w|z)$,
by dividing both sides by $D(z|w)$
we have
\begin{equation*}
\frac{E(z;\bs{a} | \bs{c}) D(z|w)}
{D(z|w)} 
= \frac{E(w;\bs{b} | \bs{c}) D(w|z)}{D(w|z)} .
\end{equation*}
We expand the operator $E(z;\bs{a} | \bs{c})$ as
\begin{equation*}
E(z;\bs{a} | \bs{c})
= \sum_{r=0}^N u^{N-r}
\sum_{\substack{I \subset \{1, \ldots, N \}\\ |I|=r}}
\sum_{I_+ \cup I_0 \cup I_- = I}
\prod_{i \in I_+} A^+(z_i;\bs{a}) \prod_{i \in I_0} A^0(z_i;\bs{a} | \bs{c}) 
\prod_{i \in I_-} A^-(z_i;\bs{a})
\prod_{i \in I_+} T_{z_i}^{\delta}
\prod_{i \in I_-} T_{z_i}^{-\delta}.
\end{equation*}
Here the third summation is taken over the all triples $(I_+, I_0, I_-)$ of subsets of $I$ such that
\begin{equation*}
I_+ \cup I_0 \cup I_- = I, \quad |I_+|+|I_0|+|I_-|=|I|.
\end{equation*}
If we set $\epsilon_i= +, 0, -$ according as $i$ belongs to $I_+, I_0, I_- (i=1, \ldots, N)$,
$E(z;\bs{a} | \bs{c})$ is alternatively  expressed as
\begin{equation*}
E(z;\bs{a} | \bs{c})
= \sum_{r=0}^N u^{N-r}
\sum_{\substack{I \subset \{1, \ldots, N \}\\ |I|=r }}
\sum_{\epsilon \in \{ \pm, 0 \}^I}
\prod_{i \in I} A^{\epsilon_i} (z_i;\bs{a}) 
\prod_{i \in I} T_{z_i}^{\epsilon_i \delta},
\end{equation*}
where we have omitted the parameters $\bs{c}$ in $A^0(z; \bs{a}|\bs{c})$.
Applying the shift operator 
$\prod_{i=1}^N T_{z_i}^{\epsilon_i\delta}\ 
(\epsilon_i \in \{\pm , 0 \})$ to $D(z|w)$,
we have
\begin{equation*}
\frac{\prod_{i=1}^N T_{z_i}^{\epsilon_i\delta}D(z|w)}
{D(z|w)}
=\prod_{1 \le i < j \le N}
\frac{[(z_i+\epsilon_i \delta) \pm (z_j + \epsilon_j \delta)]}
  {[z_i \pm z_j]}
\prod_{\substack{1 \le i \le N \\1 \le k \le N}}
\frac{[z_i \pm w_k] }{[(z_i + \epsilon_i \delta) \pm w_k ]} .
\end{equation*}
Hence we obtain
\begin{align*}
\frac{E(z;\bs{a} | \bs{c})D(z|w)}
{D(z|w)}
=&
\sum_{r=0}^N u^{N-r}
\sum_{\substack{I \subset \{1, \ldots, N \}\\ |I|=r }}
\sum_{\epsilon \in \{ \pm, 0 \}^I}
\prod_{i \in I} A^{\epsilon_i} (z_i;\bs{a}) \nonumber\\
&\cdot
\prod_{1 \le i < j \le N}
\frac{[(z_i+\epsilon_i \delta) \pm (z_j + \epsilon_j \delta)]}
  {[z_i \pm z_j]}
\prod_{\substack{1 \le i \le N \\1 \le k \le N}}
\frac{[z_i \pm w_k] }{[(z_i + \epsilon_i \delta) \pm w_k ]} \nonumber\\
=&
\sum_{r=0}^N u^{N-r}
\sum_{\substack{I \subset \{1, \ldots, N \}\\ |I|=r}}
\sum_{I_+ \cup I_- \cup I_0 = I}
\prod_{i \in I_+} A^+(z_i;\bs{a}) \prod_{i \in I_0} A^0(z_i;\bs{a} | \bs{c})
\prod_{i \in I_-} A^-(z_i; \bs{a})
\nonumber\\
&\cdot
\prod_{\{i, j \} \subset I_+} \frac{[z_i +z_j +2\delta]}{[z_i +z_j]}
\prod_{\{i, j \} \subset I_-} \frac{[z_i +z_j -2\delta]}{[z_i +z_j]}
\prod_{\substack{i \in I_+ \\ j \in I_-} }\frac{[z_i - z_j +2\delta]}{[z_i - z_j]} \nonumber \\
&\cdot
\prod_{\substack{i \in I_+ \\ j \in I_0} }
\frac{[z_i + \delta \pm z_j]}{[z_i \pm z_j]}
\prod_{\substack{i \in I_- \\ j \in I_0} }
\frac{[z_i - \delta \pm z_j]}{[z_i \pm z_j]}
\prod_{\substack{i \in I_+ \\ 1 \le k \le N}}
\frac{[z_i \pm w_k]}{[z_i + \delta \pm w_k]}
\prod_{\substack{i \in I_- \\ 1 \le k \le N}}
\frac{[z_i \pm w_k]}{[z_i - \delta \pm w_k]}.
\end{align*}
Here $\prod_{\{i, j \} \subset I_{\epsilon}}\ (\epsilon=\pm, 0)$ stands for
the product  over all two-element subsets of $I_{\epsilon}$.
By computing $\frac{E(w;\bs{b} | \bs{c}) D(w|z)}{D(w|z)}$,
we obtain the following theorem.
\begin{theo}\label{Theo:BC set}
For any complex parameters $\bs{a} = (a_\pa, a_\pb, \ldots, a_\ph)$,
we define the parameters $\bs{b} =(b_\pa, b_\pb, \ldots, b_\ph)$ by $b_p = \delta - a_p\ (p=\pa, \pb, \ldots, \ph)$.
Suppose that the balancing condition $\sum_{p=\pa}^\ph a_p = \sum_{p=\pa}^\ph b_p = 4\delta$ 
is satisfied. Then, 
for two sets of variables $z=(z_1, \ldots, z_N)$ and $w=(w_1, \ldots, w_N)$,
the following identity holds for $r=0, 1, \ldots, N$\,$:$
\begin{align}
&\sum_{\substack{I \subset \{1, \ldots, N \}\\ |I|=r}}
\sum_{I_+ \cup I_- \cup I_0 = I}
\prod_{i \in I_+} A^+(z_i; \bs{a}) \prod_{i \in I_0} A^0(z_i;\bs{a} | \bs{c})
\prod_{i \in I_-} A^-(z_i;\bs{a}) \nonumber\\
&\cdot
\prod_{\{i, j \} \subset I_+} \frac{[z_i +z_j +2\delta]}{[z_i +z_j]}
\prod_{\{i, j \} \subset I_-} \frac{[z_i +z_j -2\delta]}{[z_i +z_j]}
\prod_{\substack{i \in I_+ \\ j \in I_-} }\frac{[z_i - z_j +2\delta]}{[z_i - z_j]} \nonumber \\
&\cdot
\prod_{\substack{i \in I_+ \\ j \in I_0} }
\frac{[z_i + \delta \pm z_j]}{[z_i \pm z_j]}
\prod_{\substack{i \in I_- \\ j \in I_0} }
\frac{[z_i - \delta \pm z_j]}{[z_i \pm z_j]}
\prod_{\substack{i \in I_+ \\ 1 \le k \le N}}
\frac{[z_i \pm w_k]}{[z_i + \delta \pm w_k]}
\prod_{\substack{i \in I_- \\ 1 \le k \le N}}
\frac{[z_i \pm w_k]}{[z_i - \delta \pm w_k]}\nonumber \\
&=\sum_{\substack{K \subset \{1, \ldots, N \}\\ |K|=r}}
\sum_{K_+ \cup K_- \cup K_0 = K}
\prod_{k \in K_+} A^+(w_k;\bs{b}) \prod_{k \in K_0} A^0(w_k;\bs{b} | \bs{c})
\prod_{k \in K_-} A^-(w_k; \bs{b})\nonumber\\
&\cdot
\prod_{\{k, l \} \subset K_+} \frac{[w_k +w_l +2\delta]}{[w_k+w_l]} 
\prod_{\{k, l \} \subset K_-} \frac{[w_k+w_l- 2\delta]}{[w_k+w_l]}
\prod_{\substack{k \in K_+ \\ l \in K_-}}\frac{[w_k-w_l +2\delta]}{[w_k-w_l]}\nonumber\\
&\cdot
\prod_{\substack{k \in K_+ \\ l \in K_0}}
\frac{[w_k+\delta \pm w_l ]}{[w_k \pm w_l]}
\prod_{\substack{k \in K_- \\ l \in K_0}}
\frac{[w_k-\delta \pm w_l ]}{[w_k \pm w_l]} 
\prod_{\substack{k \in K_+ \\ 1 \le i \le N}}
\frac{[w_k \pm z_i]}{[w_k +\delta \pm z_i]}
\prod_{\substack{k \in K_- \\ 1 \le i \le N}}
\frac{[w_k \pm z_i]}{[w_k -\delta \pm z_i]} \label{eq:BC set1}.
\end{align}
\end{theo}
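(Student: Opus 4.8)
The plan is to obtain the identity as the coefficient-wise reading of a single operator identity, produced by applying a product of one-variable Ruijsenaars--van Diejen operators to the elliptic Cauchy determinant $D(z|w)$ of \eqref{eq:determinant formula}. First I would introduce the auxiliary polynomial operator $E(z;\bs{a}|\bs{c}) = \prod_{i=1}^N(u + L(z_i;\bs{a}|\bs{c}))$ in an extra free parameter $u$, whose sole purpose is to package the subsets $I$ of fixed cardinality $r$ into the coefficient of $u^{N-r}$, so that a fixed $r$ in the theorem corresponds to extracting one such coefficient.

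The first key step is the operator identity $E(z;\bs{a}|\bs{c})D(z|w) = E(w;\bs{b}|\bs{c})D(z|w)$. This holds because each $L(z_i;\bs{a}|\bs{c})$ acts only on the variable $z_i$, which occurs solely in the $i$-th row of the matrix $(1/[z_i\pm w_j])_{i,j}$; applying $u + L(z_i;\bs{a}|\bs{c})$ row by row and invoking the kernel identity Theorem \ref{Theo:BC_1}(1) to rewrite each entry as $L(z_i;\bs{a}|\bs{c})(1/[z_i\pm w_j]) = L(w_j;\bs{b}|\bs{c})(1/[z_i\pm w_j])$, one may equally read the resulting determinant as the column-wise action of $E(w;\bs{b}|\bs{c})$. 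The balancing condition $\sum_p a_p = 4\delta$ (equivalently $\sum_p b_p = 4\delta$) is precisely the hypothesis required to apply that kernel identity entry by entry.

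The second step is to make both sides explicit. Using $D(z|w) = (-1)^{N^2}D(w|z)$ I divide through by $D(z|w)$ to obtain the symmetric identity $E(z;\bs{a}|\bs{c})D(z|w)/D(z|w) = E(w;\bs{b}|\bs{c})D(w|z)/D(w|z)$. Expanding $E(z;\bs{a}|\bs{c})$ as a sum over subsets $I$ and triples $(I_+,I_0,I_-)$, the only nontrivial ingredient is the evaluation of the shift ratio $\prod_i T_{z_i}^{\epsilon_i\delta}D(z|w)/D(z|w)$; the factored form \eqref{eq:determinant formula} turns this into an explicit product of theta factors, which reorganizes into the pairwise products over $\{i,j\}\subset I_+$ and $\{i,j\}\subset I_-$, the cross terms between $I_+$ and $I_-$, and the interactions with $I_0$ and with the $w_k$, exactly as displayed for the left-hand side in the computation preceding the theorem. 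The right-hand side is then the same computation carried out with $z\leftrightarrow w$ and $\bs{a}\leftrightarrow\bs{b}$.

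The final step is that $u$ is a free parameter, so the two polynomials in $u$ agree coefficient by coefficient; equating the coefficients of $u^{N-r}$ gives the asserted identity for each $r = 0,1,\ldots,N$. I expect the main bookkeeping obstacle to be the reorganization of the shift ratio into the stated product of pairwise factors: one must track the quasi-periodicity of $[u]$ and check that the interactions among shifted ($I_+$), oppositely shifted ($I_-$), and unshifted ($I_0$) indices assemble into precisely the displayed factors with no residual constants. Because the Cauchy determinant factorizes completely, however, this is a finite and purely combinatorial verification rather than a genuine analytic difficulty.
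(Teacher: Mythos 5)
Your proposal is correct and follows essentially the same route as the paper: the auxiliary operator $E(z;\bs{a}|\bs{c})=\prod_{i=1}^N(u+L(z_i;\bs{a}|\bs{c}))$ applied row-wise to the elliptic Cauchy determinant, the entry-wise use of the kernel identity of Theorem \ref{Theo:BC_1}(1) under the balancing condition to pass to the column-wise action of $E(w;\bs{b}|\bs{c})$, division by $D(z|w)$ with the factorization \eqref{eq:determinant formula} to evaluate the shift ratios, and extraction of the coefficient of $u^{N-r}$. No gaps; the remaining work you flag (reassembling the shift ratio into the displayed pairwise factors) is exactly the bookkeeping the paper carries out explicitly before stating the theorem.
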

This formula is invariant under the action of the Weyl group of type $C$ on
$z$ variables and $w$ variables, respectively.
\subsection{Duality transformation of type $C$ on subsets}
We specialize the parameters in Theorem \ref{Theo:BC set} 
by setting $\{a_\pe, a_\pf, a_\pg, a_\ph \} = \{-\frac{1}{2}(\omega_r-\delta)\  (r=0, 1, 2, 3)\}$.
Since $A^0(x;\bs{a}| \bs{c})=0$ under this specialization,
the terms with $I_0 \neq \emptyset$ in the left-hand side of \eqref{eq:BC set1} vanish.
Similarly, the terms with $K_0 \neq \emptyset$ in the right-hand side become zero.
Replacing the parameter $\delta$ by $\delta/2$, 
we obtain the following theorem.
\begin{theo}\label{Theo:C set1}
For the parameters $\bs{a} = (a_\pa, a_\pb, a_\pc, a_\pd)$, we define the parameters 
$\bs{b} =(b_\pa, b_\pb, b_\pc, b_\pd)$ by $b_p = \delta/2 - a_p\ (p=\pa, \pb, \pc, \pd)$.
Suppose that the balancing condition $\sum_{p=\pa}^\pd a_p = \sum_{p=\pa}^\pd b_p = \delta$
is satisfied. Then the following identity holds
for two sets of variables $z=(z_1, \ldots, z_N)$ and 
$w=(w_1, \ldots, w_N)$ 
for $r=0, 1, \ldots, N$\,$:$ 
\begin{align}
&\sum_{\substack{I \subset \{1, \ldots, N\}\\ |I|=r }}
\sum_{I_+ \cup I_- = I}
\prod_{i \in I_+} B^+(z_i;\bs{a}) \prod_{i \in I_-} B^-(z_i;\bs{a})\nonumber\\
&\cdot
\prod_{\{i, j \} \subset I_+} \frac{[z_i +z_j +\delta]}{[z_i +z_j]}
\prod_{\{i, j \} \subset I_-} \frac{[z_i +z_j -\delta]}{[z_i +z_j]}
\prod_{\substack{i \in I_+ \\ j \in I_-} }\frac{[z_i - z_j +\delta]}{[z_i - z_j]}\nonumber \\
&
\prod_{\substack{i \in I_+ \\ 1 \le k \le N}}
\frac{[z_i \pm w_k]}{[z_i + \frac{\delta}{2} \pm w_k]}
\prod_{\substack{i \in I_- \\ 1 \le k \le N}}
\frac{[z_i \pm w_k]}{[z_i - \frac{\delta}{2} \pm w_k]}\nonumber \\
&=(-1)^r
\sum_{\substack{K \subset \{1, \ldots, N\}\\ |K|=r }}
\sum_{K_+ \cup K_-  = K}
\prod_{k \in K_+} B^+(w_k; \bs{b}) 
\prod_{k \in K_-} B^-(w_k; \bs{b})\nonumber\\
&\cdot
\prod_{\{k, l \} \subset K_+} \frac{[w_k +w_l +\delta]}{[w_k+w_l]} 
\prod_{\{k, l \} \subset K_-} \frac{[w_k+w_l- \delta]}{[w_k+w_l]}
\prod_{\substack{k \in K_+ \\ l \in K_-}}\frac{[w_k-w_l +\delta]}{[w_k-w_l]} \nonumber\\
&\cdot
\prod_{\substack{k \in K_+ \\ 1 \le i \le N}}
\frac{[w_k \pm z_i]}{[w_k +\frac{\delta}{2} \pm z_i]}
\prod_{\substack{k \in K_- \\ 1 \le i \le N}}
\frac{[w_k \pm z_i]}{[w_k -\frac{\delta}{2} \pm z_i]}. \label{eq:C set1}
\end{align}
\end{theo}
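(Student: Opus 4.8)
The plan is to obtain Theorem \ref{Theo:C set1} entirely as a specialization of the type $BC$ identity \eqref{eq:BC set1}, following the recipe announced just before the statement: fix the four auxiliary parameters to $\{a_\pe,a_\pf,a_\pg,a_\ph\}=\{-\tfrac12(\omega_r-\delta):r=0,1,2,3\}$ and, at the very end, rescale $\delta\mapsto\delta/2$. As already recorded in Section 2, this choice forces $A^0(x;\bs{a}\,|\,\bs{c})=0$ and $A^0(w;\bs{b}\,|\,\bs{c})=0$, so every term of \eqref{eq:BC set1} carrying a nonempty $I_0$ (on the left) or $K_0$ (on the right) drops out. What survives is exactly a sum over two-block partitions $I=I_+\cup I_-$ and $K=K_+\cup K_-$, together with precisely the two-variable and cross factors appearing in \eqref{eq:C set1}, but with $2\delta$ where \eqref{eq:C set1} has $\delta$ and with $\delta$ where \eqref{eq:C set1} has $\delta/2$. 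Hence after the rescaling $\delta\mapsto\delta/2$ all of these geometric factors already match, and the real content of the proof is to track what the specialization does to the coefficients $A^{\pm}$.

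First I would record the effect of the specialization on the single-variable coefficients. Writing $\gamma=\tfrac12\prod_{s=1}^3[-\tfrac12\omega_s]$, the duplication formula \eqref{eq:double-angle formula} gives, exactly as in the computation preceding the definition of $R(x;\bs{a})$,
\[
\prod_{p=\pe}^\ph[x+a_p]=\gamma\,[2x+\delta],\qquad \prod_{p=\pe}^\ph[x-a_p]=-\gamma\,[2x-\delta],
\]
where the second identity follows from the first by the substitution $x\mapsto -x$ and the oddness of $[u]$. Substituting these into $A^{\pm}(x;\bs{a})$ and cancelling the factors $[2x\pm\delta]$ against the denominators yields $A^+(z_i;\bs{a})=\gamma\,B^+(z_i;\bs{a})$ and $A^-(z_i;\bs{a})=\gamma\,B^-(z_i;\bs{a})$. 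The same computation for the dual parameters $b_p=\delta-a_p$, i.e. $\{b_\pe,\dots,b_\ph\}=\{\tfrac12(\omega_r+\delta)\}$, produces an \emph{extra sign}: $\prod_{p=\pe}^\ph[w+b_p]=-\gamma\,[2w+\delta]$ and $\prod_{p=\pe}^\ph[w-b_p]=\gamma\,[2w-\delta]$, whence $A^{\pm}(w_k;\bs{b})=-\gamma\,B^{\pm}(w_k;\bs{b})$.

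With these identities in hand the conclusion is immediate. On the left-hand side of \eqref{eq:BC set1} each surviving term carries $\prod_{i\in I_+}A^+(z_i;\bs{a})\prod_{i\in I_-}A^-(z_i;\bs{a})=\gamma^{|I|}\prod_{i\in I_+}B^+\prod_{i\in I_-}B^-=\gamma^{r}\prod B^{\pm}$, while on the right-hand side the corresponding product equals $(-\gamma)^{|K|}\prod B^{\pm}(w;\bs{b})=(-1)^{r}\gamma^{r}\prod B^{\pm}(w;\bs{b})$. Dividing the whole fixed-$r$ identity by the common constant $\gamma^{r}$ therefore leaves the type $C$ left-hand side untouched and multiplies the right-hand side by $(-1)^{r}$, which is exactly the factor displayed in \eqref{eq:C set1}. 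Finally, replacing $\delta$ by $\delta/2$ sends the surviving geometric factors to their stated form ($2\delta\to\delta$, $\delta\to\delta/2$) and turns the balancing conditions $\sum_{p=\pa}^\pd a_p=2\delta$ and $\sum_{p=\pa}^\pd b_p=2\delta$ into $\sum_{p=\pa}^\pd a_p=\sum_{p=\pa}^\pd b_p=\delta$, with $b_p=\delta/2-a_p$.

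The one genuinely delicate point, and the step I would write out most carefully, is the sign asymmetry between the $\bs{a}$-side and the $\bs{b}$-side: both $A^{\pm}(z;\bs{a})=\gamma\,B^{\pm}$ come with $+\gamma$, whereas both $A^{\pm}(w;\bs{b})=-\gamma\,B^{\pm}$ come with $-\gamma$. This is the sole source of the factor $(-1)^{r}$ in the statement, and it rests entirely on tracking the oddness of $[u]$ through the half-period shifts $\tfrac12(\omega_r\mp\delta)$ in the duplication formula. Everything else is bookkeeping: verifying that no $I_0/K_0$ terms survive and that the geometric products transform as claimed under $\delta\mapsto\delta/2$.
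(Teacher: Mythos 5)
Your proposal is correct and follows the same route as the paper: specialize $\{a_\pe,\dots,a_\ph\}=\{-\tfrac12(\omega_r-\delta)\}$ in Theorem \ref{Theo:BC set} so that all $I_0,K_0$ terms drop out, then rescale $\delta\mapsto\delta/2$. The only content the paper leaves implicit is the origin of the factor $(-1)^r$, and your sign bookkeeping — $A^{\pm}(z;\bs{a})=\gamma B^{\pm}(z;\bs{a})$ versus $A^{\pm}(w;\bs{b})=-\gamma B^{\pm}(w;\bs{b})$ via the duplication formula and the oddness of $[u]$ — is exactly the computation already recorded in Section 2 (where $\prod_{p=\pe}^\ph[y+b_p]=\tfrac12[-2y-\delta]\prod_{s=1}^3[-\tfrac12\omega_s]$), so your argument matches the intended one in every detail.
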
 
\section{Duality transformation formulas of type $C$}
\subsection{Duality transformations of type $C$ on multi-indices}
To derive a duality transformation formula for type $C$,
we apply in advance the shift operator $\prod_{i=1}^N T_{z_i}^{\delta/2} \prod_{k=1}^N T_{w_k}^{\delta/2}$
to \eqref{eq:C set1} with $r=N$:
\begin{align}
&\sum_{I_+ \cup I_- = \{1, \ldots, N\}}
\prod_{i \in I_+} B^+(z_i+\tfrac{\delta}{2};\bs{a}) \prod_{i \in I_-} B^-(z_i+\tfrac{\delta}{2};\bs{a}) \nonumber\\
&\cdot
\prod_{\{i, j \} \subset I_+} \frac{[z_i +z_j +2\delta]}{[z_i +z_j+\delta]}
\prod_{\{i, j \} \subset I_-} \frac{[z_i +z_j]}{[z_i +z_j+\delta]}
\prod_{\substack{i \in I_+ \\ j \in I_-} }\frac{[z_i - z_j +\delta]}{[z_i - z_j]}\nonumber \\
&\cdot 
\prod_{\substack{i \in I_+ \\ 1 \le k \le N}}
\frac{[z_i +\frac{\delta}{2} \pm (w_k+\frac{\delta}{2})]}{[z_i +\delta \pm (w_k+\frac{\delta}{2})]}
\prod_{\substack{i \in I_- \\ 1 \le k \le N}}
\frac{[z_i +\frac{\delta}{2} \pm (w_k+\frac{\delta}{2})]}{[z_i  \pm (w_k+\frac{\delta}{2})]} \nonumber \\
&=(-1)^N
\sum_{K_+ \cup K_-  = \{1, \ldots, N\}}
\prod_{k \in K_+} B^+(w_k+\tfrac{\delta}{2}; \bs{b}) 
\prod_{k \in K_-} B^-(w_k+\tfrac{\delta}{2}; \bs{b}) \nonumber\\
&\cdot
\prod_{\{k, l \} \subset K_+} 
\frac{[w_k +w_l +2\delta]}
{[w_k+w_l+\delta]} 
\prod_{\{k, l \} \subset K_-} 
\frac{[w_k+w_l]}{[w_k+w_l+\delta]}
\prod_{\substack{k \in K_+ \\ l \in K_-}}
\frac{[w_k-w_l +\delta]}{[w_k-w_l]} \nonumber\\
&\cdot
\prod_{\substack{k \in K_+ \\ 1 \le i \le N}}
\frac{[w_k+\frac{\delta}{2} \pm (z_i+\frac{\delta}{2})]}
{[w_k +\delta \pm (z_i+\frac{\delta}{2})]}
\prod_{\substack{k \in K_- \\ 1 \le i \le N}}
\frac{[w_k +\frac{\delta}{2} \pm (z_i+\frac{\delta}{2})]}
{[w_k \pm (z_i+\frac{\delta}{2})]}. \label{eq:C set2}
\end{align}
We take two multi-indices
$\alpha =(\alpha_1, \ldots, \alpha_m) \in \mathbb{N}^m$ and
$\beta=(\beta_1, \ldots, \beta_n) \in \mathbb{N}^n$ with
$|\alpha| = |\beta|= N$,
where $|\alpha|=\sum_{i=1}^m \alpha_i$ and 
$|\beta|=\sum_{k=1}^n \beta_k$.
We specialize the variables in \eqref{eq:C set2} by setting $z=(x)_\alpha, x=(x_1, \ldots, x_m)$ and $w=(y)_\beta, y=(y_1, \ldots, y_n)$
as follows:
\begin{equation}\label{eq:mult principal sp}
\begin{split}
z&=(x)_\alpha :=
(x_1, x_1+\delta, \ldots, x_1+(\alpha_1-1)\delta;
\ldots \ldots; x_m, x_m+\delta, \ldots, x_m+(\alpha_m-1)\delta), \\
w&=(y)_\beta :=
(y_1, y_1+\delta, \ldots, y_1+(\beta_1-1)\delta;
\ldots \ldots; y_n, y_n+\delta, \ldots, y_n+(\beta_n-1)\delta). 
\end{split}
\end{equation}
This specialization is called the \textit{multiple principal specialization}.

We first consider the principal specialization $z = (x, x+ \delta, \ldots, x+(N-1)\delta)$ of a single block.
We replace the index set $\{1, \ldots, N\}$ by $I:=\{0, 1, \ldots, N-1 \}$. 
Then \eqref{eq:C set2} is expressed as
\begin{align}
&\sum_{\epsilon \in \{\pm \}^I}
\prod_{i \in I} B^{\epsilon_i} (z_i+\tfrac{\delta}{2};\bs{a}) 
\prod_{\{i, j\} \subset I} 
\frac{[(z_i+\tfrac{\delta}{2}+\epsilon_i \tfrac{\delta}{2}) \pm (z_j +\tfrac{\delta}{2}+\epsilon_j \tfrac{\delta}{2})]}
{[(z_i +\tfrac{\delta}{2}) \pm (z_j +\tfrac{\delta}{2})]} \nonumber \\
&\cdot
\prod_{\substack{i \in I \\ k \in I}} \frac{[(z_i +\tfrac{\delta}{2}) \pm (w_k+\tfrac{\delta}{2})]}
{[(z_i +\tfrac{\delta}{2}+\epsilon_i \tfrac{\delta}{2}) \pm (w_k +\tfrac{\delta}{2})]} \nonumber \\
&=
(-1)^N
\sum_{\epsilon \in  \{ \pm \}^I}
\prod_{k \in I} B^{\epsilon_k} (w_k +\tfrac{\delta}{2}; \bs{b}) \prod_{\{k, l\} \subset I} 
\frac{[(w_k+ \tfrac{\delta}{2} +\epsilon_k \tfrac{\delta}{2}) \pm (w_l +\tfrac{\delta}{2} +\epsilon_l \tfrac{\delta}{2})]}
{[(w_k +\tfrac{\delta}{2}) \pm (w_l +\tfrac{\delta}{2})]} \nonumber \\
&\cdot
\prod_{\substack{k \in I \\ i \in I}} 
\frac{[(w_k +\tfrac{\delta}{2}) \pm (z_i+\tfrac{\delta}{2})]}
{[(w_k +\tfrac{\delta}{2}+\epsilon_k \tfrac{\delta}{2}) \pm (z_i +\tfrac{\delta}{2})]}.
\label{eq:C set epsilon}
\end{align}
In the left-hand side of \eqref{eq:C set epsilon},
the term corresponding to $\epsilon=(\epsilon_0, \ldots, \epsilon_{N-1})$ vanishes if the sign sequence includes
the pattern $(\epsilon_{i}, \epsilon_{i+1}) =(+, -)\ (i=0, 1, \ldots, N-2)$.
If it does not contain the pattern $+-$,
it is an increasing sequence
\begin{equation*}
(\epsilon_0, \ldots, \epsilon_{N-1})=(- \cdots -  \overset{\mu}{+} \cdots +).
\end{equation*}
Then such a sequence $\epsilon=(\epsilon_0, \ldots, \epsilon_{N-1})$
is determined by a non-negative integer $\mu$ such that
$0 \le \mu \le N$;
the number of $-$ signs is given by $\mu$.
Namely,
\begin{equation*}
I_-=[0, \mu), \quad I_+=[\mu, N).
\end{equation*}

When we apply the multiple principal specialization $z=(x)_{\alpha}, w=(y)_{\beta}$ as in \eqref{eq:mult principal sp},
we replace the index set $\{1, \ldots,  N \}$ of the $z$ variables by
\begin{equation*}
\{1, \ldots , N\} = \{(i, a) | 1 \le i \le m, \ 0 \le a < \alpha_i \} %\label{eq:replace of index of C}
\end{equation*}
and write $z_{(i, a)} = x_i +a\delta $.
Then all the pairs $(I_-, I_+)$ of 
subsets giving rise to non-zero terms  are
parametrized by the multi-indices $0 \le \mu \le \alpha$,
namely $0 \le \mu_i \le \alpha_i\ (i=1, \ldots, m)$,
as follows:
\begin{equation*}
\begin{split}
I_- &= \{(i, a) | 1 \le i \le m, \ 0 \le a < \mu_i \} ,\\
I_+ &= \{(i, a) | 1 \le i \le m, \ \mu_i \le a < \alpha_i \}. 
\end{split}
\end{equation*}
Specializing the left-hand side of formula \eqref{eq:C set2}
by $z=(x)_\alpha$ and $w=(y)_\beta$ with this parametrization, we have
\begin{align}
&\prod_{i=1}^m \frac{\prod_{p=0}^3[x_i+\frac{\delta}{2}+a_p]_{\alpha_i}}{[2x_i+\delta]_{\alpha_i}}
\prod_{1 \le i < j \le m}
\frac{[x_i+x_j+(\alpha_j+1)\delta]_{\alpha_i} }
{[x_i+x_j+\delta]_{\alpha_i}}
\prod_{\substack{1 \le i \le m \\ 1 \le k \le n}}
\prod_{r=0}^{\beta_k-1}
\frac{[x_i+\frac{\delta}{2}\pm (y_k +  r\delta+ \frac{\delta}{2})]_{\alpha_i}}
{[x_i+\delta \pm (y_k + r\delta + \frac{\delta}{2})]_{\alpha_i}}
\nonumber\\
&\cdot
\sum_{0 \le \mu \le \alpha} 
\prod_{i=1}^m \prod_{p=\pa}^\pd
\frac{[x_i+\frac{\delta}{2}-a_p]_{\mu_i}}
{[x_i+\frac{\delta}{2}+a_p]_{\mu_i}}
\prod_{i=1}^m \frac{[2x_i +2\mu_i\delta]}{[2x_i]}
\prod_{1 \le i < j \le m} 
\frac{[(x_i +\mu_i\delta) \pm (x_j+\mu_j\delta)]}{[x_i \pm x_j]} \nonumber\\
&\cdot
\prod_{1 \le i, j \le m}
\frac{[x_i+x_j]_{\mu_i} [x_i-x_j-\alpha_j\delta]_{\mu_i}}
{[x_i+x_j+(\alpha_j+1)\delta]_{\mu_i} [x_i-x_j+\delta]_{\mu_i}}
\prod_{\substack{1 \le i \le m \\ 1 \le k \le n}}
\frac{[x_i+y_k+\frac{\delta}{2}+\beta_k\delta]_{\mu_i} [x_i-y_k+\frac{\delta}{2}]_{\mu_i}}
{[x_i+y_k+\frac{\delta}{2}]_{\mu_i} 
[x_i-y_k+\frac{\delta}{2}- \beta_k\delta]_{\mu_i}} .
\label{eq:left of type C after specialization}
\end{align}
Here $[u]_k=[u][u+\delta] \cdots [u+(k-1)\delta]$ and $[u \pm v]_k=[u+v]_k [u-v]_k$
for $k=0, 1, 2 \ldots$.
\par\medskip
For each $\alpha \in \mathbb{N}^m$,
we introduce the function 
\begin{equation*}
\Phi_{\alpha}(x|u):=
\Phi_{\alpha}(x_1, \ldots, x_m|u_1, \ldots, u_n)
\end{equation*}
in the variables $x$ and $u$ by
\begin{align*}
\Phi_{\alpha}(x|u)
=&
\sum_{0 \le \mu \le \alpha} 
\prod_{i=1}^m \frac{[2x_i +2\mu_i\delta]}{[2x_i]}
\prod_{1 \le i < j \le m} 
\frac{[(x_i +\mu_i\delta) \pm (x_j+\mu_j\delta)]}{[x_i \pm x_j]} \nonumber \\
&\cdot \prod_{1 \le i, j \le m}
\frac{[x_i+x_j]_{\mu_i} [x_i-x_j-\alpha_j\delta]_{\mu_i}}
{[x_i+x_j+(\alpha_j+1)\delta]_{\mu_i} [x_i-x_j+\delta]_{\mu_i}}
\prod_{\substack{1 \le i \le m \\ 1 \le k \le n}}
\frac{[x_i+u_k]_{\mu_i}}
{[x_i-u_k+\delta]_{\mu_i}} %\label{eq:def of multiple HG}.
\end{align*}
We remark that when $m=1 (\alpha=N)$ we have 
\begin{equation}\label{eq:Phi to V}
\Phi_{N}(x|u_1, \ldots, u_n)
= {}_{n+6}V_{n+5} (2x; x+u_1, \ldots, x+u_n,-N\delta),
\end{equation}
where 
\begin{equation*}
{}_{r+5}V_{r+4} \left( a_{0};  a_{1}, \ldots, a_{r} \right)
=\sum_{k=0}^\infty
\frac{[a_{0}+2k\delta]}{[a_{0}]} \frac{[a_{0}]_{k}}{[\delta]_{k}}
\prod_{i=1}^{r}
\frac{[a_i]_{k}}
{[\delta+a_{0}-a_{i}]_{k}}.
\end{equation*}
We also note that
\begin{equation*}
\Phi_{(\alpha_1, \ldots, \alpha_{m-1}, 0)}(x_1, \ldots, x_m|u_1, \ldots, u_n)
=\Phi_{(\alpha_1, \ldots, \alpha_{m-1})}(x_1, \ldots, x_{m-1}|u_1, \ldots, u_n)
\end{equation*}
and
\begin{equation*}
\Phi_{(\alpha_1, \ldots, \alpha_{m})}(x_1, \ldots, x_m|u_1, \ldots, u_{n-1}, \tfrac{\delta}{2})
=\Phi_{(\alpha_1, \ldots, \alpha_{m})}(x_1, \ldots, x_{m}|u_1, \ldots, u_{n-1}).
\end{equation*}
In this notation of $\Phi_{\alpha}(x|u)$,
\eqref{eq:left of type C after specialization}
is expressed as 
\begin{align*}
&\prod_{i=1}^m \frac{\prod_{p=0}^3[x_i+\frac{\delta}{2}+a_p]_{\alpha_i}}{[2x_i+\delta]_{\alpha_i}}
\prod_{1 \le i < j \le m}
\frac{[x_i+x_j+(\alpha_j+1)\delta]_{\alpha_i} }
{[x_i+x_j+\delta]_{\alpha_i}}
\prod_{\substack{1 \le i \le m \\ 1 \le k \le n}}
\prod_{r=0}^{\beta_k-1}
\frac{[x_i+\frac{\delta}{2}\pm (y_k +  r\delta+ \frac{\delta}{2})]_{\alpha_i}}
{[x_i+\delta \pm (y_k + r\delta + \frac{\delta}{2})]_{\alpha_i}} \nonumber\\
&\cdot
\Phi_{\alpha}
\left(x \middle|
(\tfrac{\delta}{2}-a_p)_{\pa \le p \le \pd},
(\tfrac{\delta}{2}+y_k+\beta_k\delta, \tfrac{\delta}{2}-y_k)_{1 \le k \le n}
\right).
\end{align*}
Similarly, by applying the same specialization to the right-hand side of 
\eqref{eq:C set2} we obtain the following Theorem.
\begin{theo}\label{Theo:C dual M=N}
Let $\alpha \in \mathbb{N}^m, \beta \in \mathbb{N}^n$ be two multi-indices with $|\alpha|=|\beta|=M$.
We assume the balancing condition $\sum_{p=\pa}^\pd a_p =\delta$.
We define the parameters $b_p\ (p=\pa, \pb, \pc, \pd)$ by 
$b_p=\frac{\delta}{2}-a_p\ (p=\pa, \pb, \pc, \pd)$.
For two sets of variables $x=(x_1, \ldots, x_m)$ and
$y=(y_1, \ldots, y_n)$,
the following identity holds\,$:$
\begin{align*}
&\prod_{i=1}^m \frac{\prod_{p=0}^3[x_i+\frac{\delta}{2}+a_p]_{\alpha_i}}{[2x_i+\delta]_{\alpha_i}}
\prod_{1 \le i < j \le m}
\frac{[x_i+x_j+(\alpha_j+1)\delta]_{\alpha_i} }
{[x_i+x_j+\delta]_{\alpha_i}}\nonumber\\
&\cdot
\Phi_{\alpha}
\left(x \middle|
(\tfrac{\delta}{2}-a_p)_{\pa \le p \le \pd},
(\tfrac{\delta}{2}+y_k+\beta_k\delta, \tfrac{\delta}{2}-y_k)_{1 \le k \le n}
\right) \nonumber\\
&=
(-1)^M
\prod_{\substack{1 \le k \le n \\ 1 \le i \le m}}
\frac{[y_k-x_i+\frac{\delta}{2}-\alpha_i \delta]_{\beta_k}}
{[y_k-x_i+\frac{\delta}{2}]_{\beta_k}} \nonumber\\
&\cdot
\prod_{k=1}^n \frac{\prod_{p=0}^3[y_k+\frac{\delta}{2}+b_p]_{\beta_k}}{[2y_k+\delta]_{\beta_k}}
\prod_{1 \le k < l \le n}
\frac{[y_k+y_l+(\beta_l+1)\delta]_{\beta_k} }
{[y_k+y_l+\delta]_{\beta_k}}
\nonumber \\
&\cdot
\Phi_{\beta}\left(y \middle|\,
(\tfrac{\delta}{2}-b_p)_{\pa \le p \le \pd},
( \tfrac{\delta}{2}+x_i +\alpha_i \delta, \tfrac{\delta}{2}-x_i)_{1 \le i \le m} \right).
%\label{eq:C multi M=N}
\end{align*}
\end{theo}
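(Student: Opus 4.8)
The plan is to deduce Theorem~\ref{Theo:C dual M=N} by specializing the already-established identity \eqref{eq:C set2} through the multiple principal specialization $z=(x)_\alpha$, $w=(y)_\beta$ of \eqref{eq:mult principal sp}. Since \eqref{eq:C set2} is the $r=N$ case of \eqref{eq:C set1} after a global $\delta/2$-shift, it suffices to evaluate each of its two sides under this specialization: the evaluation of the left-hand side is exactly the computation culminating in \eqref{eq:left of type C after specialization}, and the right-hand side will be treated by the same procedure with $(z,x,\alpha,\bs{a})$ and $(w,y,\beta,\bs{b})$ interchanged. The overall $(-1)^M$ in the statement is inherited directly from the $(-1)^N$ of \eqref{eq:C set2} with $N=M$.

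First I would reduce the sign sum to a sum over multi-indices. Indexing the $z$-variables as $\{(i,a)\mid 1\le i\le m,\ 0\le a<\alpha_i\}$ with $z_{(i,a)}=x_i+a\delta$, I use the vanishing mechanism already noted for \eqref{eq:C set epsilon}: if a sign sequence has a consecutive pattern $(\epsilon_{(i,a)},\epsilon_{(i,a+1)})=(+,-)$ inside a block, then the corresponding numerator carries the factor $[(x_i+(a+1)\delta)-(x_i+(a+1)\delta)]=[0]$ and the term drops. As the blocks sit at generically independent base points $x_i$, no further cross-block cancellation occurs, so the surviving configurations are precisely the increasing ones within each block, parametrized by $0\le\mu\le\alpha$ with $I_-=\{(i,a)\mid a<\mu_i\}$ and $I_+=\{(i,a)\mid\mu_i\le a<\alpha_i\}$.

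Next I would perform the telescoping. Under the specialization each coefficient $B^{\epsilon}(z_{(i,a)}+\tfrac{\delta}{2};\bs{a})$ and each of the quadratic factors $\prod_{\{i,j\}}$ and cross factors $\prod_{i,k}$ becomes a string of consecutive $\delta$-shifts of $[u]$, which collapse into the Pochhammer-type symbols $[u]_k$. Separating the $\mu$-independent factors (which form the prefactor $\prod_i\frac{\prod_p[x_i+\frac{\delta}{2}+a_p]_{\alpha_i}}{[2x_i+\delta]_{\alpha_i}}$, the factor $\prod_{i<j}\frac{[x_i+x_j+(\alpha_j+1)\delta]_{\alpha_i}}{[x_i+x_j+\delta]_{\alpha_i}}$, and a cross product $P$ between $x$ and $y$) from the $\mu$-dependent factors (which reassemble into $\Phi_\alpha$ with its $u$-arguments set to $(\tfrac{\delta}{2}-a_p)_p$ and $(\tfrac{\delta}{2}+y_k+\beta_k\delta,\tfrac{\delta}{2}-y_k)_k$) reproduces \eqref{eq:left of type C after specialization}. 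The right-hand side, treated identically, yields the analogous prefactor in $y$, the function $\Phi_\beta$, and its own cross product $Q$ between $y$ and $x$; dividing the resulting identity by $P$ and using that the ratio $Q/P$ telescopes to $\prod_{k,i}\frac{[y_k-x_i+\frac{\delta}{2}-\alpha_i\delta]_{\beta_k}}{[y_k-x_i+\frac{\delta}{2}]_{\beta_k}}$ gives the asserted formula.

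The main obstacle is this last telescoping. Writing $P$ and $Q$ as products of $[u]_k$ over rectangular arrays of arguments, the factors depending on $x_i+y_k$ cancel between $P$ and $Q$ by the symmetry of the two index rectangles (the multiplicity of each $[x_i+y_k+\ell\delta]$ agrees after exchanging the two summation ranges), and the factors depending on $x_i-y_k$ cancel up to sign by the oddness of $[u]$; the residual $\pm\tfrac{\delta}{2}$-shifted difference factors are exactly what survive as the stated cross product. Keeping track of these multiplicities and of the accumulated signs so that the net sign is precisely $(-1)^M$ is the delicate bookkeeping, but once it is confirmed the symmetry of the construction makes the right-hand side evaluation automatic from the left-hand one.
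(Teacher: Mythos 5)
Your proposal is correct and follows essentially the same route as the paper: apply the multiple principal specialization \eqref{eq:mult principal sp} to \eqref{eq:C set2}, observe that within each block only sign sequences free of the pattern $+-$ survive so that the sum is parametrized by $0\le\mu\le\alpha$ (resp.\ $0\le\nu\le\beta$), collapse the shifted factors into the symbols $[u]_k$ to recover \eqref{eq:left of type C after specialization} and the function $\Phi_\alpha$, and treat the right-hand side symmetrically, with the ratio of the two cross products telescoping to $\prod_{k,i}[y_k-x_i+\frac{\delta}{2}-\alpha_i\delta]_{\beta_k}/[y_k-x_i+\frac{\delta}{2}]_{\beta_k}$. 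The level of detail you leave to bookkeeping (the sign accounting and the $x$--$y$ cross-product cancellation) is exactly what the paper also leaves implicit in its ``similarly, by applying the same specialization to the right-hand side'' step.
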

We now  generalize this formula to the case where $|\alpha| \neq |\beta|$. 
In the notation of Theorem \ref{Theo:C dual M=N},
we choose $y_n = b_\pa-\frac{\delta}{2}=-a_\pa$
and set $\sum_{k=1}^{n-1}\beta_k=N$ and $\beta_n=M-N$.
In the left-hand side, we have
\begin{align*}
&\Phi_{\alpha}\left(x \middle|\,
(\tfrac{\delta}{2}-a_p)_{\pa \le p \le \pd},
(\tfrac{\delta}{2}+y_k + \beta_k\delta, \tfrac{\delta}{2}-y_k )_{1 \le k \le n-1},
\tfrac{\delta}{2}-a_\pa+(M-N)\delta, \tfrac{\delta}{2}+a_\pa \right) \nonumber \\
&=\Phi_{\alpha}\left(x \middle|\,
\tfrac{\delta}{2}-a_\pa+(M-N)\delta,
(\tfrac{\delta}{2}-a_p)_{\pb \le p \le \pd},
(\tfrac{\delta}{2}+y_k + \beta_k\delta, \tfrac{\delta}{2}-y_k )_{1 \le k \le n-1} \right).
\end{align*}
On the other hand, in the right-hand side we have 
\begin{align*}
&\Phi_{\beta}\left(y_1, \ldots, y_{n-1}, b_0-\tfrac{\delta}{2} \middle|\, 
(\tfrac{\delta}{2}-b_p)_{\pa \le p \le \pd},
(\tfrac{\delta}{2}+x_i + \alpha_i \delta, \tfrac{\delta}{2}-x_i)_{1 \le i \le m} \right) \nonumber\\
&=\Phi_{(\beta_1, \ldots, \beta_{n-1})}\left(y_1, \ldots, y_{n-1} \middle|\,
\tfrac{\delta}{2}-b_\pa-(M-N)\delta,
(\tfrac{\delta}{2}-b_p)_{\pb \le p \le \pd},
(\tfrac{\delta}{2}+x_i + \alpha_i \delta, \tfrac{\delta}{2}-x_i )_{1 \le i \le m} \right)
\end{align*}
after non-trivial cancellations.
Finally, we have
\begin{align*}
&\prod_{i=1}^m \frac{[x_i+\frac{\delta}{2}+a_0-(M-N)\delta]_{\alpha_i}\prod_{p=\pb}^\pd[x_i+\frac{\delta}{2}+a_p]_{\alpha_i}}
{[2x_i+\delta]_{\alpha_i}}
\prod_{1 \le i < j \le m}
\frac{[x_i+x_j+(\alpha_j+1)\delta]_{\alpha_i} }
{[x_i+x_j+\delta]_{\alpha_i}} \nonumber\\
&\cdot
\Phi_{\alpha}\left(x \middle|\,
\tfrac{\delta}{2}-a_\pa+(M-N)\delta,
(\tfrac{\delta}{2}-a_p)_{\pb \le p \le \pd},
(\tfrac{\delta}{2}+y_k + \beta_k\delta, \tfrac{\delta}{2}-y_k )_{1 \le k \le n-1} \right) \nonumber\\
&=(-1)^M \prod_{p=\pb}^\pd [\delta-a_\pa-a_p]_{M-N}
\prod_{\substack{1 \le k \le n-1 \\ 1 \le i \le m}}
\frac{[y_k-x_i+\frac{\delta}{2}-\alpha_i \delta]_{\beta_k}}
{[y_k-x_i+\frac{\delta}{2}]_{\beta_k}}\nonumber\\
&\cdot
\prod_{k=1}^n
\frac{[y_k+\frac{\delta}{2}+b_0+(M-N)\delta]_{\beta_k}\prod_{p=\pb}^\pd[y_k+\frac{\delta}{2}+b_p]_{\beta_k}}
{[2y_k+\delta]_{\beta_k}}
\prod_{1 \le k< l \le n-1}
\frac{[y_k+y_l+(\beta_l+1)\delta]_{\beta_k}}
{[y_k+y_l+\delta]_{\beta_k}}
\nonumber\\
&\cdot \Phi_{(\beta_1, \ldots, \beta_{n-1})}\left(y_1, \ldots, y_{n-1} \middle|\,
\tfrac{\delta}{2}-b_\pa-(M-N)\delta,
(\tfrac{\delta}{2}-b_p)_{\pb \le p \le \pd},
(\tfrac{\delta}{2}+x_i + \alpha_i \delta, \tfrac{\delta}{2}-x_i )_{1 \le i \le m} \right).
\end{align*}
Replacing $n-1, a_\pa-(M-N)\delta, b_\pa+(M-N)\delta$ by $ n,  a_\pa, b_\pa$ respectively,
we have a duality transformation formula for type $C$ in the case $|\alpha| \ge |\beta|$:
\begin{align}
&\prod_{i=1}^m \frac{\prod_{p=\pa}^\pd[x_i+\frac{\delta}{2}+a_p]_{\alpha_i}}
{[2x_i+\delta]_{\alpha_i}}
\prod_{1 \le i < j \le m}
\frac{[x_i+x_j+(\alpha_j+1)\delta]_{\alpha_i} }
{[x_i+x_j+\delta]_{\alpha_i}}\nonumber\\
&\cdot
\Phi_{\alpha}\left(x \middle|\,
(\tfrac{\delta}{2}-a_p)_{\pa \le p \le \pd},
(\tfrac{\delta}{2}+y_k + \beta_k\delta, \tfrac{\delta}{2}-y_k )_{1 \le k \le n} \right) \nonumber\\
&=(-1)^N \prod_{p=\pb}^\pd [a_\pa+a_p]_{M-N}
\prod_{\substack{1 \le k \le n \\ 1 \le i \le m}}
\frac{[y_k-x_i+\frac{\delta}{2}-\alpha_i \delta]_{\beta_k}}
{[y_k-x_i+\frac{\delta}{2}]_{\beta_k}}\nonumber\\
&\cdot \prod_{k=1}^n
\frac{\prod_{p=\pa}^\pd[y_k+\frac{\delta}{2}+b_p]_{\beta_k}}
{[2y_k+\delta]_{\beta_k}}
\prod_{1 \le k< l \le n}
\frac{[y_k+y_l+(\beta_l+1)\delta]_{\beta_k}}
{[y_k+y_l+\delta]_{\beta_k}} \nonumber \\
&\cdot \Phi_{\beta}\left(y \middle|\,
(\tfrac{\delta}{2}-b_p)_{\pa \le p \le \pd},
(\tfrac{\delta}{2}+x_i + \alpha_i \delta, \tfrac{\delta}{2}-x_i )_{1 \le i \le m} \right),
\label{eq:dual transformation of type C ver Phi}
\end{align}
under the balancing condition $\sum_{p=\pa}^\pd a_p =(N-M+1)\delta$.
Further, since
\begin{equation*}
\prod_{p=\pb}^\pd [a_\pa+a_p]_{M-N}
= \frac{\prod_{p=\pb}^\pd [a_\pa+a_p]_{M}}
{\prod_{1 \le p < q \le \pd}  [b_p+b_q]_{N}}
\end{equation*}
under the balancing condition $\sum_{p=\pa}^\pd a_p =(N-M+1)\delta$,
we find that
 \eqref{eq:dual transformation of type C ver Phi} is expressed as follows:
 \begin{align}
&\prod_{1 \le p < q \le \pd}  [b_p+b_q]_{N}
\prod_{i=1}^m \frac{\prod_{p=\pa}^\pd[x_i+\frac{\delta}{2}+a_p]_{\alpha_i}}
{[2x_i+\delta]_{\alpha_i}}
\prod_{1 \le i < j \le m}
\frac{[x_i+x_j+(\alpha_j+1)\delta]_{\alpha_i} }
{[x_i+x_j+\delta]_{\alpha_i}}\nonumber\\
&\cdot
\Phi_{\alpha}\left(x \middle|\,
(\tfrac{\delta}{2}-a_p)_{\pa \le p \le \pd},
(\tfrac{\delta}{2}+y_k + \beta_k\delta, \tfrac{\delta}{2}-y_k )_{1 \le k \le n} \right) \nonumber\\
&=(-1)^N
\prod_{p=\pb}^\pd [a_\pa+a_p]_{M}
\prod_{\substack{1 \le k \le n \\ 1 \le i \le m}}
\frac{[y_k-x_i+\frac{\delta}{2}-\alpha_i \delta]_{\beta_k}}
{[y_k-x_i+\frac{\delta}{2}]_{\beta_k}} \nonumber\\
&\cdot  
\prod_{k=1}^n
\frac{\prod_{p=\pa}^\pd[y_k+\frac{\delta}{2}+b_p]_{\beta_k}}
{[2y_k+\delta]_{\beta_k}}
\prod_{1 \le k< l \le n}
\frac{[y_k+y_l+(\beta_l+1)\delta]_{\beta_k}}
{[y_k+y_l+\delta]_{\beta_k}} \nonumber\\
&\cdot
\Phi_{\beta}\left(y \middle|\,
(\tfrac{\delta}{2}-b_p)_{\pa \le p \le \pd},
(\tfrac{\delta}{2}+x_i + \alpha_i \delta, \tfrac{\delta}{2}-x_i )_{1 \le i \le m} \right).
\label{eq:dual transformation of type C ver Phi 2}
\end{align}
It is easily checked that this identity is valid in the case $|\alpha| \le |\beta|$ as well.
\begin{theo}[Duality transformation formula of type $C$]
\label{Theo:C dual}
For any complex parameters 
$\bs{a} = (a_\pa, a_\pb, a_\pc, a_\pd)$,
we define $\bs{b} =(b_\pa, b_\pb, b_\pc, b_\pd)$ by $b_p = \frac{\delta}{2} - a_p\ (p=\pa, \pb, \pc, \pd)$.
Take two multi-indices $\alpha \in \mathbb{N}^m$ and 
$\beta \in \mathbb{N}^n$ such that $|\alpha|=M, |\beta|=N$.
Under the balancing condition $\sum_{p=\pa}^\pd a_p =(N-M+1)\delta$,
for two sets of variables $x=(x_1, \ldots, x_m)$ and $y=(y_1, \ldots, y_n)$
the following identity holds\,$:$
\begin{align*}
&\prod_{1 \le p < q \le \pd}  [b_p+b_q]_{N}
\prod_{i=1}^m \frac{\prod_{p=\pa}^\pd[x_i+\frac{\delta}{2}+a_p]_{\alpha_i}}
{[2x_i+\delta]_{\alpha_i}}
\prod_{1 \le i < j \le m}
\frac{[x_i+x_j+(\alpha_j+1)\delta]_{\alpha_i} }
{[x_i+x_j+\delta]_{\alpha_i}}\nonumber\\
&\sum_{0 \le \mu \le \alpha} 
\prod_{i=1}^m \prod_{p=\pa}^\pd
\frac{[x_i+\frac{\delta}{2}- a_p]_{\mu_i}}
{[x_i+\frac{\delta}{2}+a_p]_{\mu_i}}
\prod_{i=1}^m \frac{[2x_i +2\mu_i\delta]}{[2x_i]}
\prod_{1 \le i < j \le m}
\frac{[(x_i +\mu_i\delta) \pm (x_j+\mu_j\delta)]}{[x_i \pm x_j]}\nonumber\\
&\cdot \prod_{1 \le i, j \le m}
\frac{[x_i+x_j]_{\mu_i} [x_i-x_j-\alpha_j\delta]_{\mu_i}}
{[x_i+x_j+(\alpha_j+1)\delta]_{\mu_i} [x_i-x_j+\delta]_{\mu_i}}
\prod_{\substack{1 \le i \le m \\ 1 \le k \le n}}
\frac{[x_i+y_k+\frac{\delta}{2}+\beta_k\delta]_{\mu_i} [x_i-y_k+\frac{\delta}{2}]_{\mu_i}}
{[x_i+y_k+\frac{\delta}{2}]_{\mu_i} 
[x_i-y_k+\frac{\delta}{2}- \beta_k\delta]_{\mu_i}}\nonumber \\
&=(-1)^N  \prod_{p=\pb}^\pd [a_\pa+a_p]_{M}
\prod_{\substack{1 \le k \le n \\ 1 \le i \le m}}
\frac{[y_k-x_i+\frac{\delta}{2}-\alpha_i \delta]_{\beta_k}}
{[y_k-x_i+\frac{\delta}{2}]_{\beta_k}} \nonumber\\
&\cdot \prod_{k =1}^n
\frac{\prod_{p=\pa}^\pd[y_k+\frac{\delta}{2}+b_p]_{\beta_k}}
{[2y_k+\delta]_{\beta_k}}
\prod_{1 \le k< l \le n}
\frac{[y_k+y_l+(\beta_l+1)\delta]_{\beta_k}}
{[y_k+y_l+\delta]_{\beta_k}} \nonumber\\
&
\sum_{0 \le \nu \le \beta}
\prod_{k=1}^n \prod_{p=\pa}^\pd \frac{[y_k+\frac{\delta}{2}-b_p]_{\nu_k}}{[y_k+\frac{\delta}{2}+b_p]_{\nu_k}}
\prod_{k=1}^n \frac{[2y_k+2\nu_k \delta]}{[2y_k]}
\prod_{1 \le k < l \le n} 
\frac{[(y_k+\nu_k\delta)\pm(y_l+\nu_l\delta)]}{[y_k\pm y_l]}
 \nonumber\\
&\cdot
\prod_{1 \le k, l \le n}
\frac{[y_k+y_l]_{\nu_k} [y_k-y_l-\beta_l \delta]_{\nu_k}}
{[y_k+y_l+(\beta_l+1) \delta]_{\nu_k} [y_k-y_l+\delta]_{\nu_k}}
\prod_{\substack{1 \le k \le n \\ 1 \le i \le m}}
\frac{[y_k+x_i+\frac{\delta}{2}+\alpha_i\delta]_{\nu_k}
[y_k+\frac{\delta}{2}-x_i]_{\nu_k}}
{[y_k+x_i+\frac{\delta}{2}]_{\nu_k}
[y_k+\frac{\delta}{2}-x_i-\alpha_i \delta]_{\nu_k}} %\label{eq:C dual}.
\end{align*}
\end{theo}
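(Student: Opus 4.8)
The statement is exactly formula \eqref{eq:dual transformation of type C ver Phi 2} with the functions $\Phi_{\alpha}$ and $\Phi_{\beta}$ written out as the explicit sums over $0\le\mu\le\alpha$ and $0\le\nu\le\beta$ appearing in their definitions; so the task reduces to establishing \eqref{eq:dual transformation of type C ver Phi 2}. The plan is to begin from the type-$C$ duality on subsets, Theorem \ref{Theo:C set1}, taken at the top level $r=N$, to symmetrize it by applying the shift $\prod_{i}T_{z_i}^{\delta/2}\prod_{k}T_{w_k}^{\delta/2}$ so as to obtain \eqref{eq:C set2}, and then to substitute the multiple principal specialization $z=(x)_\alpha$, $w=(y)_\beta$ of \eqref{eq:mult principal sp}. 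Both sides then become finite sums indexed by decompositions $I_+\cup I_-$ of the index set of the size-$N$ collection of blocks, and the whole problem is to identify which terms survive and to repackage the survivors.

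The decisive step is the collapse of the $\epsilon\in\{\pm\}^I$ summation inside each principal block. Writing $z_{(i,a)}=x_i+a\delta$, two consecutive indices in a block differ by $\delta$; if at positions $(i,a)$ and $(i,a+1)$ one has the descent $(\epsilon_{(i,a)},\epsilon_{(i,a+1)})=(+,-)$, then after the half-shift the relevant numerator factor $[\,(z_{(i,a)}+\delta)-z_{(i,a+1)}\,]=[0]$ vanishes, so that term drops out. Hence only the increasing ``staircase'' sign patterns contribute, and within block $i$ these are parametrized by the number $\mu_i$ of minus signs, giving $I_-=\{(i,a):0\le a<\mu_i\}$ and $I_+=\{(i,a):\mu_i\le a<\alpha_i\}$ with $0\le\mu\le\alpha$. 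Collecting the surviving weights $B^{\pm}$, the pair products over $\{i,j\}$, and the mixed products over $(i,k)$ into Pochhammer symbols $[u]_k$ and recognizing the result through the definition of $\Phi_{\alpha}$ yields \eqref{eq:left of type C after specialization}; treating the right-hand side identically produces the equal-degree identity of Theorem \ref{Theo:C dual M=N}.

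To lift the restriction $|\alpha|=|\beta|$, I would specialize one additional $y$-variable, $y_n=-a_0$ (that is $b_0-\tfrac{\delta}{2}$), in the $M=N$ identity and invoke the two reduction properties $\Phi_{(\beta_1,\dots,\beta_{n-1},0)}=\Phi_{(\beta_1,\dots,\beta_{n-1})}$ and $\Phi(\,\cdots\mid\cdots,\tfrac{\delta}{2})=\Phi(\,\cdots\mid\cdots)$, together with the argument rearrangements recorded above the statement. After the cancellations in the prefactors and the relabeling $n-1\mapsto n$, $a_0-(M-N)\delta\mapsto a_0$, one reaches \eqref{eq:dual transformation of type C ver Phi} under the balancing condition $\sum_{p}a_p=(N-M+1)\delta$; the elementary identity $\prod_{p\ge 1}[a_0+a_p]_{M-N}=\prod_{p\ge 1}[a_0+a_p]_M\big/\prod_{p<q}[b_p+b_q]_N$, valid under that condition, then symmetrizes it into \eqref{eq:dual transformation of type C ver Phi 2}, which is the asserted formula once $\Phi_{\alpha}$ and $\Phi_{\beta}$ are expanded.

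The genuinely laborious part is not the vanishing argument, which is clean, but the factor bookkeeping: converting the quotients of $[\,\cdot\,]$-values left by the principal specialization into the Pochhammer products of $\Phi_{\alpha}$, and verifying the non-trivial cancellations in the degree-changing step. I expect the main obstacle to be tracking the pair-product factors $\prod_{\{i,j\}}$ and the mixed $x$--$y$ factors through the half-shift and the block expansion, since these are precisely where the $\alpha_j$- and $\beta_k$-dependent shifts (such as $[x_i-x_j-\alpha_j\delta]_{\mu_i}$ and $[x_i+y_k+\tfrac{\delta}{2}+\beta_k\delta]_{\mu_i}$) are generated; once those are matched, the two sides align termwise with the definition of $\Phi$.
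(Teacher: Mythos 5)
Your proposal is correct and follows essentially the same route as the paper: shifting the $r=N$ case of Theorem \ref{Theo:C set1} to obtain \eqref{eq:C set2}, applying the multiple principal specialization with the $(+,-)$-descent vanishing argument to reach Theorem \ref{Theo:C dual M=N}, and then specializing $y_n=-a_\pa$ and symmetrizing via the $[a_\pa+a_p]_{M-N}$ identity to arrive at \eqref{eq:dual transformation of type C ver Phi 2}. No substantive differences from the paper's argument.
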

Through the limiting procedure we see that Theorem \ref{Theo:C dual} holds in the rational and trigonometric cases as well, with $[u]$ specified as in \eqref{eq:three cases}.
In the trigonometric case,
it gives rise to a one parameter extension of the transformation formula for multiple basic hypergeometric series, due to Rosengren \cite[Corollary 4.4]{Ro1}.

By setting $\alpha=(1, 1, \ldots, 1 )$,
$\beta=(1, 1, \ldots, 1)$
and $z_i = x_i+\frac{\delta}{2}, w_k= y_k+ \frac{\delta}{2}$
in Theorem \ref{Theo:C dual}, we obtain the following generalization of the case $r=N$
of Theorem \ref{Theo:C set1}.
\begin{coro}\label{Coro:C set3}
Consider two sets of variables $z=(z_1, \ldots, z_M), w=(w_1, \ldots, w_N)$. 
We assume the balancing condition $\sum_{p=\pa}^\pd a_p = (N-M+1)\delta$.
We define the parameters $\bs{b}=(b_\pa, b_\pb, b_\pc, b_\pd)$ by 
$b_p=\frac{\delta}{2}-a_p\ (p=\pa, \pb, \pc, \pd)$.
Then, the following identity holds\,$:$
\begin{align*}
&\prod_{1 \le p < q \le \pd}  [b_p+b_q]_{N}
\sum_{I_+ \sqcup I_- = \{1, \ldots, M\}}
\prod_{i \in I_+} 
B^+(z_i;\bs{a})
\prod_{i \in I_-}
B^-(z_i;\bs{a})\nonumber\\
&\cdot
\prod_{\{i, j \} \subset I_+} \frac{[z_i +z_j +\delta]}{[z_i +z_j]}
\prod_{\{i, j \} \subset I_-} \frac{[z_i +z_j-\delta]}{[z_i +z_j]}
\prod_{\substack{i \in I_+ \\ j \in I_-} }
\frac{[z_i - z_j +\delta]}{[z_i - z_j]} \nonumber\\
&\cdot
\prod_{\substack{i \in I_+\\ 1 \le k \le N}}
\frac{[z_i \pm w_k]}{[z_i +\frac{\delta}{2} \pm w_k]}
\prod_{\substack{i \in I_- \\ 1 \le k \le N}}
\frac{[z_i  \pm w_k]}{[z_i -\frac{\delta}{2} \pm w_k]} \nonumber\\
&=(-1)^N \prod_{p=\pb}^\pd [a_\pa+a_p]_{M}
\sum_{K_+ \sqcup K_- = \{1, \ldots, N\}}
\prod_{k \in K_+} B^+(w_k; \bs{b})
\prod_{k \in K_-} B^-(w_k; \bs{b})\nonumber\\
&\cdot
\prod_{\{k, l \} \subset K_+} \frac{[w_k +w_l +\delta]}{[w_k+w_l]} 
\prod_{\{k, l \} \subset K_-} \frac{[w_k+w_l-\delta]}{[w_k+w_l]}
\prod_{\substack{k \in K_+ \\ l \in K_-}}
\frac{[w_k-w_l +\delta]}{[w_k-w_l]} \nonumber\\
&\cdot \prod_{\substack{k \in K_+ \\ 1 \le i \le M}}
\frac{[w_k \pm z_i]}{[w_k +\frac{\delta}{2} \pm z_i]}
\prod_{\substack{k \in K_- \\ 1 \le i \le M}}
\frac{[w_k  \pm z_i]}{[w_k -\frac{\delta}{2} \pm  z_i]}.
%\label{eq:C set3}
\end{align*}
\end{coro}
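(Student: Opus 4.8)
The plan is to derive Corollary \ref{Coro:C set3} as the specialization of the duality transformation formula of type $C$ (Theorem \ref{Theo:C dual}) in which both multi-indices are taken to be $\alpha=\beta=(1,\ldots,1)$, so that $m=M$, $n=N$, together with the shifts $z_i=x_i+\tfrac{\delta}{2}$ and $w_k=y_k+\tfrac{\delta}{2}$. Under $\alpha_i=\beta_k=1$ every elliptic shifted factorial in Theorem \ref{Theo:C dual} has length one, i.e. $[u]_{\alpha_i}=[u]_{\beta_k}=[u]$, and each summation range $0\le\mu\le\alpha$, $0\le\nu\le\beta$ collapses to $\{0,1\}^{M}$, $\{0,1\}^{N}$. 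I will encode these by the decompositions $I_+\sqcup I_-=\{1,\ldots,M\}$ and $K_+\sqcup K_-=\{1,\ldots,N\}$ with $I_-=\{i:\mu_i=1\}$ and $K_-=\{k:\nu_k=1\}$, matching the subset sums in the Corollary.

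The first task is to recover the operator symbols $B^\pm$. The outer prefactor $\prod_i\prod_p[x_i+\tfrac{\delta}{2}+a_p]/[2x_i+\delta]$ is exactly $\prod_iB^+(z_i;\bs{a})$, since $2x_i+\delta=2z_i$ and $x_i+\tfrac{\delta}{2}=z_i$. For an index $i\in I_-$ I will combine this $B^+(z_i;\bs{a})$ with the summand ratio $\prod_p[z_i-a_p]/[z_i+a_p]$, the factor $[2x_i+2\delta]/[2x_i]$, and the diagonal ($i=j$) contribution
\[
\frac{[2x_i]\,[-\delta]}{[2x_i+2\delta]\,[\delta]}=-\frac{[2x_i]}{[2x_i+2\delta]},
\]
whose product telescopes, using that $[u]$ is odd, to $-\prod_p[z_i-a_p]/[2z_i]=B^-(z_i;\bs{a})$; for $i\in I_+$ the summand is trivial and $B^+(z_i;\bs{a})$ survives. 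The same pairing, now with the prefactor $\prod_k\prod_p[y_k+\tfrac{\delta}{2}+b_p]/[2y_k+\delta]=\prod_kB^+(w_k;\bs{b})$ and the $\nu$-diagonal, produces $\prod_{k\in K_+}B^+(w_k;\bs{b})\prod_{k\in K_-}B^-(w_k;\bs{b})$. Likewise the products $\prod_{i<j}[(x_i+\mu_i\delta)\pm(x_j+\mu_j\delta)]/[x_i\pm x_j]$, together with $\prod_{i<j}[x_i+x_j+2\delta]/[x_i+x_j+\delta]$ and the remaining off-diagonal terms, reorganize into the $z$-only factors $\prod_{\{i,j\}\subset I_+}$, $\prod_{\{i,j\}\subset I_-}$, $\prod_{i\in I_+,\,j\in I_-}$ of the Corollary after a routine split into the three cases for $\{i,j\}$; the $w$-side factors arise identically.

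The one genuinely new ingredient is the treatment of the mixing factors. The direct specialization produces, inside the $\mu$-sum, only $[z_i+\tfrac{\delta}{2}\pm w_k]/[z_i-\tfrac{\delta}{2}\pm w_k]$ for $i\in I_-$ (and $1$ for $i\in I_+$), whereas the Corollary carries the symmetric factors $[z_i\pm w_k]/[z_i+\tfrac{\delta}{2}\pm w_k]$ on $I_+$ and $[z_i\pm w_k]/[z_i-\tfrac{\delta}{2}\pm w_k]$ on $I_-$. I will bridge this gap by multiplying both sides of the specialized identity by $\prod_{1\le i\le M,\,1\le k\le N}[z_i\pm w_k]/[z_i+\tfrac{\delta}{2}\pm w_k]$. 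On the left this extra factor is the $I_+$ contribution verbatim, and for $i\in I_-$ it converts $[z_i+\tfrac{\delta}{2}\pm w_k]/[z_i-\tfrac{\delta}{2}\pm w_k]$ into $[z_i\pm w_k]/[z_i-\tfrac{\delta}{2}\pm w_k]$. On the right it is absorbed into the prefactor $\prod_{k,i}[w_k-z_i-\tfrac{\delta}{2}]/[w_k-z_i+\tfrac{\delta}{2}]$ coming from $\prod_{k,i}[y_k-x_i+\tfrac{\delta}{2}-\alpha_i\delta]_{\beta_k}/[y_k-x_i+\tfrac{\delta}{2}]_{\beta_k}$; here the key elementary identity, immediate from the oddness of $[u]$, is
\[
\frac{[w_k-z_i-\tfrac{\delta}{2}]}{[w_k-z_i+\tfrac{\delta}{2}]}\cdot\frac{[z_i\pm w_k]}{[z_i+\tfrac{\delta}{2}\pm w_k]}=\frac{[w_k\pm z_i]}{[w_k+\tfrac{\delta}{2}\pm z_i]},
\]
which produces the $K_+$ mixing factor, while the $\nu$-summand factor $[w_k+\tfrac{\delta}{2}\pm z_i]/[w_k-\tfrac{\delta}{2}\pm z_i]$ upgrades it to the $K_-$ form.

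Finally the scalar prefactor $\prod_{1\le p<q\le 3}[b_p+b_q]_N$, the sign and scalar $(-1)^N\prod_{p=1}^3[a_0+a_p]_M$, and the balancing condition $\sum_{p=0}^3 a_p=(N-M+1)\delta$ are inherited from Theorem \ref{Theo:C dual} without change. The main obstacle is precisely this distribution and cancellation of the mixing factors across the two subset sums; but it reduces to the displayed oddness identity together with the length-one collapses of the shifted factorials, so no essentially new computation beyond careful bookkeeping is required.
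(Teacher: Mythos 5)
Your proposal is correct and is precisely the paper's own derivation: the paper obtains Corollary \ref{Coro:C set3} by setting $\alpha=\beta=(1,\ldots,1)$ and $z_i=x_i+\tfrac{\delta}{2}$, $w_k=y_k+\tfrac{\delta}{2}$ in Theorem \ref{Theo:C dual}, exactly as you do. Your write-up in fact supplies more detail than the paper (which states the specialization without computation), and the key steps you single out --- the diagonal $i=j$ terms of $\prod_{1\le i,j\le m}$ converting $B^+$ into $B^-$ on $I_-$, and the redistribution of the factor $\prod_{i,k}[z_i\pm w_k]/[z_i+\tfrac{\delta}{2}\pm w_k]$ across the two sides --- all check out.
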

This corollary has already been proved in \cite[Theorem 7.9]{Ra1} by using a different method.
We also remark that Theorem \ref{Theo:C dual}
can be derived from Corollary \ref{Coro:C set3}
by multiple principal specialization.
\subsection{Some special cases}
In this subsection, we derive some formulas from duality transformation formula
of type $C$.
Considering Theorem \ref{Theo:C dual}
for $\beta=0$ case, we obtain
a $C_m$ generalization of the Frenkel\,--\,Turaev sum, due to Rosengren \cite[Theorem 7.1]{Ro2}.
\begin{coro}
Take a multi-index $\alpha \in \mathbb{N}^m$ such that $|\alpha|=M$.
Under the balancing condition $\sum_{p=\pa}^\pd a_p =-(M-1)\delta$,
the following identity holds\,$:$
\begin{equation*}
\Phi_{\alpha} \left(x \middle| (\tfrac{\delta}{2}-a_p)_{\pa \le p \le \pd} \right)
=\prod_{p=\pb}^\pd [a_\pa+a_p]_{M}
\prod_{i=1}^m 
\frac{[2x_i+\delta]_{\alpha_i}}
{\prod_{p=\pa}^\pd[x_i+\frac{\delta}{2}+a_p]_{\alpha_i}}
\prod_{1 \le i < j \le m}
\frac{[x_i+x_j+\delta]_{\alpha_i}}
{[x_i+x_j+(\alpha_j+1)\delta]_{\alpha_i}}.
\end{equation*}
\end{coro}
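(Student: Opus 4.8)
The plan is to obtain this Corollary directly from Theorem~\ref{Theo:C dual} by specializing to the degenerate case $\beta=0$, i.e.\ by taking $n=0$, so that $\beta$ is the empty multi-index and $N=|\beta|=0$. With $N=0$ the balancing condition $\sum_{p=0}^{3}a_p=(N-M+1)\delta$ of the theorem becomes $\sum_{p=0}^{3}a_p=-(M-1)\delta$, which is precisely the hypothesis imposed here. All that remains is to track how the $\beta$- and $y$-dependent data collapse under the conventions $[u]_0=1$ and the empty-product/empty-sum rules.

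On the left-hand side, the prefactor $\prod_{1\le p<q\le 3}[b_p+b_q]_N$ equals $1$ since $N=0$; and in the $\mu$-sum only the $y$-dependent factor $\prod_{1\le i\le m,\,1\le k\le n}(\cdots)$ is over an empty index set, so the summand reduces to the one defining $\Phi_{\alpha}(x\mid(\tfrac{\delta}{2}-a_p)_{0\le p\le 3})$ (the four arguments $\tfrac{\delta}{2}-a_p$ still contribute the surviving product $\prod_{i}\prod_{p=0}^{3}[x_i+\tfrac{\delta}{2}-a_p]_{\mu_i}/[x_i+\tfrac{\delta}{2}+a_p]_{\mu_i}$). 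On the right-hand side, $(-1)^{N}=1$, both the double product over $1\le k\le n$ and the single product $\prod_{k=1}^{n}(\cdots)$ are empty and equal $1$, and $\Phi_{\beta}$ attached to the empty multi-index equals $1$: its $\nu$-sum runs over $\nu\in\mathbb{N}^0$, a single empty tuple with empty-product summand $1$ (equivalently, iterate the reduction $\Phi_{(\ldots,0)}=\Phi_{(\ldots)}$ recorded just before Theorem~\ref{Theo:C dual M=N}). Hence the entire right-hand side collapses to $\prod_{p=1}^{3}[a_0+a_p]_M$.

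After these reductions Theorem~\ref{Theo:C dual} reads
\[
\Biggl(\prod_{i=1}^{m}\frac{\prod_{p=0}^{3}[x_i+\tfrac{\delta}{2}+a_p]_{\alpha_i}}{[2x_i+\delta]_{\alpha_i}}\prod_{1\le i<j\le m}\frac{[x_i+x_j+(\alpha_j+1)\delta]_{\alpha_i}}{[x_i+x_j+\delta]_{\alpha_i}}\Biggr)\Phi_{\alpha}\!\left(x\,\middle|\,(\tfrac{\delta}{2}-a_p)_{0\le p\le 3}\right)=\prod_{p=1}^{3}[a_0+a_p]_{M}.
\]
The final step is simply to divide both sides by the product prefactor: inverting the two $x$-products sends $\prod_{p}[x_i+\tfrac{\delta}{2}+a_p]_{\alpha_i}/[2x_i+\delta]_{\alpha_i}$ and $[x_i+x_j+(\alpha_j+1)\delta]_{\alpha_i}/[x_i+x_j+\delta]_{\alpha_i}$ to their reciprocals, yielding exactly the asserted identity.

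Since the argument is a pure specialization, there is no analytic obstacle. The only point demanding care is the bookkeeping of the degenerate conventions, and in particular confirming that the empty-$\beta$ value of $\Phi_{\beta}$ is $1$ rather than $0$, so that the right-hand side does not collapse to the trivial identity $0=0$.
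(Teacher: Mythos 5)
Your proposal is correct and coincides with the paper's own derivation: the paper obtains this corollary precisely by setting $\beta=0$ in Theorem \ref{Theo:C dual}, whereupon the balancing condition becomes $\sum_{p=\pa}^{\pd}a_p=-(M-1)\delta$ and all $N$- and $y$-dependent factors degenerate to $1$ exactly as you describe. Your careful bookkeeping of the empty-product conventions (in particular that $\Phi_\beta=1$ for the empty multi-index) is the only content of the argument, and it is handled correctly.
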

By setting $n=1$ in Theorem \ref{Theo:C dual}, we obtain the following transformation formula.
\begin{coro}\label{Coro:C type n=1}
Let $M$ and $N$ be two non-negative integers.
For any complex parameters 
$\bs{a} = (a_\pa, a_\pb, a_\pc, a_\pd)$,
we define the parameters $\bs{b} =(b_\pa, b_\pb, b_\pc, b_\pd)$ by $b_p = \frac{\delta}{2} - a_p\ (p=\pa, \pb, \pc, \pd)$.
Take a multi-index $\alpha \in \mathbb{N}^m$ with $|\alpha|=M$.
Under the balancing condition $\sum_{p=\pa}^\pd a_p =(N-M+1)\delta$,
for two sets of variables $x=(x_1, \ldots, x_m)$ and $y$
the following identity holds\,$:$
\begin{align*}
&\prod_{1 \le p < q \le \pd}  [b_p+b_q]_{N}
\prod_{i=1}^m \frac{\prod_{p=\pa}^\pd[x_i+\frac{\delta}{2}+a_p]_{\alpha_i}}
{[2x_i+\delta]_{\alpha_i}}
\prod_{1 \le i < j \le m}
\frac{[x_i+x_j+(\alpha_j+1)\delta]_{\alpha_i} }
{[x_i+x_j+\delta]_{\alpha_i}} \nonumber\\
&\cdot
\Phi_{\alpha}
\left(x \middle| (\tfrac{\delta}{2}-a_p)_{\pa \le p \le \pd},
(\tfrac{\delta}{2}+y + N\delta, \tfrac{\delta}{2}-y)\right)\nonumber \\
&=(-1)^N \prod_{p=\pb}^\pd [a_\pa+a_p]_{M}
\prod_{i=1}^m
\frac{[y-x_i+\frac{\delta}{2}-\alpha_i \delta]_{N}}
{[y-x_i+\frac{\delta}{2}]_{N}}
\frac{\prod_{p=\pa}^\pd[y+\frac{\delta}{2}+b_p]_{N}}
{[2y+\delta]_{N}}\nonumber\\
&\cdot
{}_{2m+10}V_{2m+9}
(2y;y+\tfrac{\delta}{2}-b_\pa, \ldots, y+\tfrac{\delta}{2}-b_\pd, 
(\tfrac{\delta}{2}+y-x_i, \tfrac{\delta}{2}+y+x_i+\alpha_i\delta)_{1 \le i \le m}, -N\delta). 
\end{align*}
\end{coro}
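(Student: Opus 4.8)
The plan is to obtain Corollary \ref{Coro:C type n=1} as the special case $n=1$ of Theorem \ref{Theo:C dual}, tracking how the single $y$-variable collapses the $\beta$-sum on the right-hand side into a very-well-poised series. First I would set $n=1$ and write $\beta=(\beta_1)=(N)$ throughout Theorem \ref{Theo:C dual}, so that the products $\prod_{1\le k<l\le n}$ over pairs of $y$-indices become empty, the prefactor $\prod_{1\le k\le n}$ reduces to a single factor in $y$, and the double product $\prod_{1\le k\le n,\,1\le i\le m}$ becomes a single product over $i$. On the left-hand side the argument of $\Phi_\alpha$ specializes directly to $(\tfrac{\delta}{2}-a_p)_{0\le p\le 3},(\tfrac{\delta}{2}+y+N\delta,\tfrac{\delta}{2}-y)$, matching the statement verbatim once one checks the balancing condition $\sum_{p=0}^3 a_p=(N-M+1)\delta$ is inherited unchanged.

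The key step is the identification of the residual $\nu$-sum on the right-hand side with a single $_{2m+10}V_{2m+9}$ series. With $n=1$, the inner sum $\sum_{0\le\nu\le\beta}$ becomes a one-dimensional sum $\sum_{\nu_1=0}^{N}$, and I would invoke the single-block reduction \eqref{eq:Phi to V}, namely $\Phi_N(y\,|\,u_1,\ldots,u_r)={}_{r+6}V_{r+5}(2y;y+u_1,\ldots,y+u_r,-N\delta)$. Here the $u$-arguments are precisely $(\tfrac{\delta}{2}-b_p)_{0\le p\le 3}$ together with the $2m$ parameters $(\tfrac{\delta}{2}+x_i+\alpha_i\delta,\tfrac{\delta}{2}-x_i)_{1\le i\le m}$, giving $r=2m+4$ and hence a $_{2m+10}V_{2m+9}$. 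Reading off $y+u_\bullet$ for each argument reproduces exactly the upper parameters listed in the Corollary: $y+\tfrac{\delta}{2}-b_p$ for $p=0,1,2,3$, the pairs $\tfrac{\delta}{2}+y+x_i+\alpha_i\delta$ and $\tfrac{\delta}{2}+y-x_i$, and the terminating parameter $-N\delta$.

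The main obstacle I anticipate is \emph{bookkeeping rather than mathematics}: one must confirm that every elliptic shifted-factorial prefactor produced by substituting $n=1$ into the generic right-hand side of Theorem \ref{Theo:C dual} matches the explicit prefactors in the Corollary, in particular the factor $\prod_{i=1}^m\frac{[y-x_i+\frac{\delta}{2}-\alpha_i\delta]_N}{[y-x_i+\frac{\delta}{2}]_N}$ coming from the cross term $\prod_{1\le k\le n,\,1\le i\le m}$, and the single-variable factor $\frac{\prod_{p=0}^3[y+\frac{\delta}{2}+b_p]_N}{[2y+\delta]_N}$ coming from the $\prod_{k=1}^n$ prefactor with $\beta_1=N$. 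I would verify that the empty product $\prod_{1\le k<l\le n}$ contributes trivially and that the sign $(-1)^N$ and the scalar $\prod_{p=1}^3[a_0+a_p]_M$, $\prod_{1\le p<q\le 3}[b_p+b_q]_N$ survive verbatim. Once these substitutions are checked termwise, no further identity is needed: the Corollary is literally Theorem \ref{Theo:C dual} specialized at $n=1$ and rewritten via \eqref{eq:Phi to V}, so the proof reduces to transcribing the specialization and naming the resulting one-dimensional series.
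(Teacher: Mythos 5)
Your proposal is correct and is exactly the paper's own proof: Corollary \ref{Coro:C type n=1} is obtained by setting $n=1$ in Theorem \ref{Theo:C dual} and rewriting the resulting one-dimensional $\nu$-sum $\Phi_{(N)}(y|\cdots)$ as a ${}_{2m+10}V_{2m+9}$ series via \eqref{eq:Phi to V}, with the prefactor bookkeeping you describe. Nothing further is needed.
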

When $m=1$ in Corollary \ref{Coro:C type n=1}, we obtain ${}_{12}V_{11}$ transformation formula.
\begin{coro}\label{Coro:12V11 MN}
Let $M$ and $N$ be  two non-negative integers.
Under the balancing condition $\sum_{p=\pa}^\pd a_p = (N-M+1)\delta$,
the following identity holds\,$:$
\begin{align*}
&{}_{12}V_{11}(2x;x+\tfrac{\delta}{2}-a_\pa,
x+\tfrac{\delta}{2}-a_\pb ,x+\tfrac{\delta}{2}-a_\pc,x+\tfrac{\delta}{2}-a_\pd,
x+y+\tfrac{\delta}{2}+N\delta, x-y+\tfrac{\delta}{2}, -M\delta) \nonumber\\
&=\prod_{p=\pb}^\pd \frac{[a_\pa+a_p]_{M}}{[(1-M)\delta-a_\pa-a_p]_N}
\prod_{p=\pa}^\pd 
\frac{[y+\frac{\delta}{2}+b_p]_{N}}
{[x+\frac{\delta}{2}+a_p]_{M}}
\frac{[2x +\delta]_{M}}
{[2y+\delta]_{N}}
\frac{[y-x+\frac{\delta}{2}-M \delta]_{N}}
{[y-x+\frac{\delta}{2}]_{N}} \nonumber\\
&\cdot {}_{12}V_{11}
(2y;y+\tfrac{\delta}{2}-b_\pa,
y+\tfrac{\delta}{2}-b_\pb, y+\tfrac{\delta}{2}-b_\pc,y+\tfrac{\delta}{2}-b_\pd,
y+x+\tfrac{\delta}{2}+M\delta, y-x+\tfrac{\delta}{2}, -N\delta),
\end{align*}
where $b_p =\frac{\delta}{2}-a_p\ (p=\pa, \pb, \pc, \pd)$.
\end{coro}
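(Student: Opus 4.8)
The plan is to obtain Corollary \ref{Coro:12V11 MN} as the single-variable case $m=1$ of Corollary \ref{Coro:C type n=1}. Setting $m=1$ collapses the multi-index to $\alpha=(M)$ and leaves one variable $x=x_1$; consequently every factorial $[\,\cdot\,]_{\alpha_i}$ becomes $[\,\cdot\,]_M$ and the products $\prod_{1\le i<j\le m}$ over pairs of $x$-variables are empty. On the right-hand side of Corollary \ref{Coro:C type n=1} the very-well-poised series is already a ${}_{2m+10}V_{2m+9}$, which at $m=1$ is precisely the ${}_{12}V_{11}$ in $2y$ of the assertion, with upper parameters $y+\tfrac{\delta}{2}-b_p$, $y-x+\tfrac{\delta}{2}$, $y+x+\tfrac{\delta}{2}+M\delta$ and terminating parameter $-N\delta$.

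The one remaining series is the factor $\Phi_{\alpha}(x\,|\,\cdots)$ on the left; since $m=1$ forces $\alpha=M$, I would invoke formula \eqref{eq:Phi to V} with the six spectral arguments
$$
(\tfrac{\delta}{2}-a_\pa,\tfrac{\delta}{2}-a_\pb,\tfrac{\delta}{2}-a_\pc,\tfrac{\delta}{2}-a_\pd,\ \tfrac{\delta}{2}+y+N\delta,\ \tfrac{\delta}{2}-y).
$$
Because each shift $x+u_j$ reproduces an upper parameter, $\Phi_M(x\,|\,\cdots)$ becomes the ${}_{12}V_{11}$ in $2x$ with upper parameters $x+\tfrac{\delta}{2}-a_p$, $x+y+\tfrac{\delta}{2}+N\delta$, $x-y+\tfrac{\delta}{2}$ and terminating parameter $-M\delta$, matching the left side of the corollary.

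It then remains to transport the scalar prefactor on the left to the right and reconcile it with the stated form. Dividing through by $\dfrac{\prod_{p=\pa}^\pd[x+\tfrac{\delta}{2}+a_p]_M}{[2x+\delta]_M}$ immediately yields the factors $\dfrac{[2x+\delta]_M}{\prod_{p=\pa}^\pd[x+\tfrac{\delta}{2}+a_p]_M}$, $\dfrac{\prod_{p=\pa}^\pd[y+\tfrac{\delta}{2}+b_p]_N}{[2y+\delta]_N}$ and $\dfrac{[y-x+\tfrac{\delta}{2}-M\delta]_N}{[y-x+\tfrac{\delta}{2}]_N}$ verbatim. The only non-routine point is the factor $\prod_{1\le p<q\le\pd}[b_p+b_q]_N$, which I would convert using the elliptic shifted-factorial reflection $[u]_N=(-1)^N[-u-(N-1)\delta]_N$, an immediate consequence of the oddness of $[\,\cdot\,]$. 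Since $b_p+b_q=\delta-a_p-a_q$ and the balancing condition $\sum_{p=\pa}^\pd a_p=(N-M+1)\delta$ gives $a_p+a_q=(N-M+1)\delta-a_\pa-a_r$ for the complementary index $r$, each pair contributes $(-1)^N[(1-M)\delta-a_\pa-a_r]_N$, so that $\prod_{1\le p<q\le\pd}[b_p+b_q]_N=(-1)^N\prod_{p=\pb}^\pd[(1-M)\delta-a_\pa-a_p]_N$. This $(-1)^N$ cancels the overall sign coming from Corollary \ref{Coro:C type n=1}, and the surviving combination $\prod_{p=\pb}^\pd[a_\pa+a_p]_M$ over $\prod_{p=\pb}^\pd[(1-M)\delta-a_\pa-a_p]_N$ reproduces the asserted factor $\prod_{p=\pb}^\pd\frac{[a_\pa+a_p]_M}{[(1-M)\delta-a_\pa-a_p]_N}$; this is the same recombination already used in passing from \eqref{eq:dual transformation of type C ver Phi} to \eqref{eq:dual transformation of type C ver Phi 2}. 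The main obstacle is thus purely bookkeeping --- tracking the sign and the index relabelling in the reflection step --- and no analytic input beyond Corollary \ref{Coro:C type n=1} and \eqref{eq:Phi to V} is required.
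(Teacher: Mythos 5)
Your proposal is correct and follows exactly the paper's route: the paper obtains this corollary precisely by setting $m=1$ in Corollary \ref{Coro:C type n=1} and using \eqref{eq:Phi to V} to rewrite $\Phi_M$ as a ${}_{12}V_{11}$. Your bookkeeping of the prefactor is also sound — each of the three pairs contributes a sign, giving $(-1)^{3N}=(-1)^N$, which cancels the $(-1)^N$ from Corollary \ref{Coro:C type n=1} as you claim.
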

As far as we have checked, 
these transformation formulas of Corollaries \ref{Coro:C type n=1} and \ref{Coro:12V11 MN}
seem to be new even in the case of basic hypergeometric series. 
We also remark that Corollary \ref{Coro:12V11 MN} can also be proved directly
by iterating the elliptic Bailey transformation.

If $M > N$ and  $a_\pa + a_\pb=-L\delta\ (L=0, 1, 2, \ldots, M-N-1)$ in \eqref{eq:dual transformation of type C ver Phi},
the right-hand side vanishes and hence we obtain
\begin{equation*}
\Phi_{\alpha}\left(x \middle|\,
\tfrac{\delta}{2}+a_\pb+L\delta,
\tfrac{\delta}{2}-a_\pb,  \tfrac{\delta}{2}+a_\pd+(M-N-L-1)\delta, \tfrac{\delta}{2}-a_\pd,  
(\tfrac{\delta}{2}+y_k + \beta_k\delta, \tfrac{\delta}{2}-y_k )_{1 \le k \le n} \right)=0.
\end{equation*}
Regarding $a_\pb, a_\pd$ as additional $y$ variables, we have the following zero formula.
\begin{coro}\label{Coro:zero formula}
Take two multi-indices $\alpha \in \mathbb{N}^m$ and 
$\beta \in \mathbb{N}^n$ such that $|\alpha|= |\beta|+1$.
Then,
\begin{equation*}
\Phi_{\alpha}\left(x \middle|\,
(\tfrac{\delta}{2}+y_k + \beta_k\delta, \tfrac{\delta}{2}-y_k )_{1 \le k \le n} \right)=0.
\end{equation*}
\end{coro}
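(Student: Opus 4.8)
The plan is to read off the vanishing directly from the duality transformation \eqref{eq:dual transformation of type C ver Phi}, taking advantage of the fact that its right-hand side displays the explicit scalar prefactor $\prod_{p=\pb}^\pd [a_\pa+a_p]_{M-N}$. The whole argument is a specialization of that identity followed by a relabelling of parameters; there is essentially no analytic content beyond the single zero $[0]=0$.

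First I would fix $M>N$, a multi-index $\alpha\in\mathbb{N}^m$ with $|\alpha|=M$, and a $y$-block $\beta\in\mathbb{N}^n$ with $|\beta|=N$, and start from \eqref{eq:dual transformation of type C ver Phi} under its balancing condition $\sum_{p=\pa}^\pd a_p=(N-M+1)\delta$. Specializing $a_\pa+a_\pb=-L\delta$ for an integer $L$ in the range $0\le L\le M-N-1$, the factor $[a_\pa+a_\pb]_{M-N}=\prod_{j=0}^{M-N-1}[a_\pa+a_\pb+j\delta]$ contains the term $[a_\pa+a_\pb+L\delta]=[0]=0$, since $[u]$ is odd. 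Hence the entire right-hand side of \eqref{eq:dual transformation of type C ver Phi} vanishes. The left-hand side is a product of elliptic prefactors, nonzero for generic $x$, multiplied by $\Phi_\alpha(\cdots)$; dividing those prefactors out yields $\Phi_\alpha(\cdots)\equiv 0$ identically in $x$ and $y$.

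Next I would rewrite the $u$-arguments carried by $\Phi_\alpha$. From the balancing condition together with $a_\pa+a_\pb=-L\delta$ one has $a_\pc+a_\pd=(N-M+1+L)\delta$, and a short computation regroups the four arguments $(\tfrac{\delta}{2}-a_p)_{\pa\le p\le\pd}$ into the two pairs $(\tfrac{\delta}{2}+a_\pb+L\delta,\ \tfrac{\delta}{2}-a_\pb)$ and $(\tfrac{\delta}{2}+a_\pd+(M-N-L-1)\delta,\ \tfrac{\delta}{2}-a_\pd)$, each of the canonical shape $(\tfrac{\delta}{2}+y+k\delta,\ \tfrac{\delta}{2}-y)$. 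Regarding $a_\pb,a_\pd$ as two extra $y$-variables with attached exponents $L$ and $M-N-L-1$, the vanishing becomes $\Phi_\alpha\!\big(x\mid(\tfrac{\delta}{2}+y_k+\beta_k\delta,\tfrac{\delta}{2}-y_k)_{1\le k\le n+2}\big)=0$, where the enlarged multi-index has total weight $N+L+(M-N-L-1)=M-1=|\alpha|-1$, as required by the statement.

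It remains to reach an arbitrary multi-index of weight $|\alpha|-1$. Because $\Phi_\alpha(x\mid u)$ involves the $u_k$ only through the symmetric product $\prod_{i,k}[x_i+u_k]_{\mu_i}/[x_i-u_k+\delta]_{\mu_i}$, it is symmetric in the pairs, so their ordering is immaterial. Given any target multi-index $(\beta_1,\ldots,\beta_{\widehat n})$ with $\widehat n\ge 2$ and total weight $|\alpha|-1$, I would let the first $\widehat n-2$ pairs form the $y$-block, so that $N=\beta_1+\cdots+\beta_{\widehat n-2}$, and let the two distinguished pairs carry $L=\beta_{\widehat n-1}$ and $M-N-L-1=\beta_{\widehat n}$; the hypotheses $M>N$ and $0\le L\le M-N-1$ then hold automatically since every $\beta_k\ge0$. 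The degenerate cases $\widehat n\le 1$ follow by adjoining a trivial pair $(\tfrac{\delta}{2},\tfrac{\delta}{2})$, that is $y=0,\ \beta=0$, and applying the reduction identity $\Phi_\alpha(\ldots,u_{n-1},\tfrac{\delta}{2})=\Phi_\alpha(\ldots,u_{n-1})$ twice. The only genuinely delicate point is this parameter bookkeeping, namely checking that the identification is consistent with both the balancing condition and the integrality range of $L$; the analytic ingredient is merely the single zero $[0]=0$.
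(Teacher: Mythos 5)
Your proposal is correct and follows exactly the paper's own argument: the paper likewise obtains the vanishing by specializing $a_\pa+a_\pb=-L\delta$ ($L=0,\ldots,M-N-1$) in \eqref{eq:dual transformation of type C ver Phi}, observing that the factor $[a_\pa+a_\pb]_{M-N}$ on the right-hand side contains $[0]=0$, and then regarding $a_\pb,a_\pd$ as additional $y$-variables with attached exponents $L$ and $M-N-L-1$. Your write-up merely makes explicit the parameter bookkeeping and the degenerate cases $\widehat n\le 1$ that the paper leaves implicit.
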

\subsection{Multiple Karlsson\,--\,Minton type transformation}
In this subsection,
we derive a multiple Karlsson\,--\,Minton type transformation from Theorem \ref{Theo:C dual}.
\begin{theo}\label{Theo:transformation N+r to N+s}
For any non-negative integers $N \ge 0, r \ge 0$ and $s \ge 0$,
take two multi-indices $\alpha \in \mathbb{N}^{m}$ and 
$\beta \in \mathbb{N}^n$ such that $|\alpha|=N+r+s, |\beta|=N$.
For two sets of variables $x=(x_1, \ldots, x_m)$ and $y=(y_1, \ldots, y_n)$
and two parameters $u$ and $v$,
the following identity holds\,$:$
\begin{align}
&
\dfrac{
[v+\delta\pm v]_s
[u+\delta\pm v]_r
\prod_{k=1}^{n}
[y_k+\delta\pm v]_{\beta_k}
}{
\prod_{i=1}^{m}
[x_i+\tfrac{\delta}{2}\pm v]_{\alpha_i}
}
\nonumber\\
&\cdot
\Phi_{(\beta,r)}\left(y,u \middle|\,
v, -v-s\delta,
(\tfrac{\delta}{2}+x_i+\alpha_i\delta, \tfrac{\delta}{2}-x_i)_{1 \le i\le m} \right)
\nonumber\\
&=
\dfrac{
[u+\delta\pm u]_r
[v+\delta\pm u]_s
\prod_{k=1}^{n}
[y_k+\delta\pm u]_{\beta_k}
}{
\prod_{i=1}^{m}
[x_i+\tfrac{\delta}{2}\pm u]_{\alpha_i}
}
\nonumber\\
&\cdot
\Phi_{(\beta,s)} \left(y,v \middle|\,
u, -u-r\delta,
(\tfrac{\delta}{2}+x_i+\alpha_i\delta, \tfrac{\delta}{2}-x_i)_{1\le i\le m} \right).
\label{eq:transformation N+r to N+s}
\end{align}
\end{theo}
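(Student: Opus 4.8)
The plan is to obtain \eqref{eq:transformation N+r to N+s} by applying the duality transformation of Theorem \ref{Theo:C dual} twice, in two parameter specializations interchanged by $(u,r)\leftrightarrow(v,s)$, and then cancelling a factor common to both.

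For the first application I keep the $x$-side $(\alpha,x)$ with $|\alpha|=N+r+s$, and take the $y$-side to be the extended data $\beta'=(\beta,r)$ in the variables $(y,u)=(y_1,\ldots,y_n,u)$, so that $|\beta'|=N+r$. I specialize $a_\pa=a_\pb=\tfrac{\delta}{2}$, $a_\pc=v$, $a_\pd=-v-s\delta$; this satisfies the balancing condition $\sum_{p=\pa}^\pd a_p=(1-s)\delta=\bigl((N+r)-(N+r+s)+1\bigr)\delta$ demanded by Theorem \ref{Theo:C dual}. Then $b_\pa=b_\pb=0$, so the two secondary arguments $\tfrac{\delta}{2}-b_\pa=\tfrac{\delta}{2}-b_\pb=\tfrac{\delta}{2}$ of the right-hand $\Phi_{\beta'}$ disappear by the reduction rule $\Phi_\gamma(\ldots\,|\,\ldots,\tfrac{\delta}{2})=\Phi_\gamma(\ldots\,|\,\ldots)$, and that $\Phi$ collapses to exactly the factor $\Phi_{(\beta,r)}\bigl(y,u\,|\,v,-v-s\delta,(\tfrac{\delta}{2}+x_i+\alpha_i\delta,\tfrac{\delta}{2}-x_i)_{1\le i\le m}\bigr)$ on the left of \eqref{eq:transformation N+r to N+s}. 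Placing the two values $\tfrac{\delta}{2}$ at the indices $\pa,\pb$ rather than at $\pc,\pd$ is what keeps the factors $\prod_{1\le p<q\le\pd}[b_p+b_q]_{N+r}$ and $\prod_{p=\pb}^\pd[a_\pa+a_p]_M$ from vanishing, since $[0]_{N+r}=0$. The second application is the image of the first under $(u,r)\leftrightarrow(v,s)$: the $y$-side is $\beta''=(\beta,s)$ in the variables $(y,v)$, with $a_\pa=a_\pb=\tfrac{\delta}{2}$, $a_\pc=u$, $a_\pd=-u-r\delta$ (balancing $(1-r)\delta$), and the right-hand $\Phi$ reduces to $\Phi_{(\beta,s)}\bigl(y,v\,|\,u,-u-r\delta,(\tfrac{\delta}{2}+x_i+\alpha_i\delta,\tfrac{\delta}{2}-x_i)_{1\le i\le m}\bigr)$, the factor on the right of \eqref{eq:transformation N+r to N+s}.

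The mechanism linking the two applications is that the left-hand side of Theorem \ref{Theo:C dual} equals, up to an explicit prefactor, the sum $\Phi_\alpha(x\,|\,P)$, and in both applications $P$ is the same multiset
\[
P=\{\,0,0,\tfrac{\delta}{2}-v,\tfrac{\delta}{2}+v+s\delta,\tfrac{\delta}{2}+u+r\delta,\tfrac{\delta}{2}-u\,\}\cup\{\,\tfrac{\delta}{2}+y_k+\beta_k\delta,\ \tfrac{\delta}{2}-y_k\,\}_{1\le k\le n},
\]
which is manifestly invariant under $(u,r)\leftrightarrow(v,s)$. Since $\Phi_\alpha(x\,|\,\cdot)$ is symmetric in its secondary arguments (they enter only through $\prod_{i,k}[x_i+u_k]_{\mu_i}/[x_i-u_k+\delta]_{\mu_i}$), the $x$-side sums in the two applications are literally the same function. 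Writing the applications as $L_1\,\Phi_\alpha(x\,|\,P)=R_1\,\Phi^{(1)}$ and $L_2\,\Phi_\alpha(x\,|\,P)=R_2\,\Phi^{(2)}$, where $\Phi^{(1)},\Phi^{(2)}$ are the two right-hand factors above, and applying $(u,r)\leftrightarrow(v,s)$ (which fixes $x,y,\alpha,\beta$, hence fixes $P$) to the first identity, one gets $R_1\Phi^{(1)}/L_1=R_2\Phi^{(2)}/L_2$. This is \eqref{eq:transformation N+r to N+s} once the coefficients $R_j/L_j$ are identified.

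The main obstacle is exactly this identification of prefactors. Here $L_1=\prod_{1\le p<q\le\pd}[b_p+b_q]_{N+r}\prod_{i}\prod_{p}[x_i+\tfrac{\delta}{2}+a_p]_{\alpha_i}/[2x_i+\delta]_{\alpha_i}$ times $\prod_{1\le i<j\le m}[x_i+x_j+(\alpha_j+1)\delta]_{\alpha_i}/[x_i+x_j+\delta]_{\alpha_i}$, while $R_1$ is the full right-hand prefactor of Theorem \ref{Theo:C dual} for $\beta'=(\beta,r)$. Extracting from $R_1$ the contribution of the extra variable $u$ (the $k=n+1$ factors) produces terms such as $[2u+\delta]_r$, $[u+\tfrac{\delta}{2}]_r$, $[u+\delta-v]_r$, $[u+\delta+v+s\delta]_r$ and the cross-terms $\prod_{k}[y_k+u+(r+1)\delta]_{\beta_k}/[y_k+u+\delta]_{\beta_k}$. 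The task is to show, using the balancing relation $\sum_{p=\pa}^\pd a_p=(1-s)\delta$ together with the splitting identity $[a]_{k+l}=[a]_k[a+k\delta]_l$ and the reflection/telescoping relations for the elliptic shifted factorials $[\,\cdot\,]_k$, that all these products (combined with the $v$-dependent factors of $L_1$ and $R_1$) collapse to the compact coefficient $[v+\delta\pm v]_s[u+\delta\pm v]_r\prod_k[y_k+\delta\pm v]_{\beta_k}/\prod_i[x_i+\tfrac{\delta}{2}\pm v]_{\alpha_i}$ of \eqref{eq:transformation N+r to N+s}, up to a factor invariant under $(u,r)\leftrightarrow(v,s)$ that therefore cancels between the two sides. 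This step is lengthy but mechanical; the delicate points are to verify that the spurious shifts by $s\delta$ (and by $r\delta$ on the swapped side) disappear and that the overall sign $(-1)^{N+r}$ recombines with the reflections of the $[\,\cdot\,]_k$ into a genuinely symmetric residual factor.
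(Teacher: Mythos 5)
Your proposal is correct and follows essentially the same route as the paper: both arguments expand the same $(u,r)\leftrightarrow(v,s)$\,-symmetric sum $\Phi_\alpha(x\,|\,P)$ in two ways via the type~$C$ duality \eqref{eq:dual transformation of type C ver Phi 2} and then divide, with the prefactor recombination left as a mechanical (the paper's ``non-trivial'') cancellation. The only difference is minor: the paper carries an auxiliary pair $(a,(l+1)\delta-a)$ through the computation and absorbs it at the end into an extra variable $x_{m+1}=\tfrac{\delta}{2}-a$ with $\alpha_{m+1}=l$, whereas you specialize that pair to $a_{0}=a_{1}=\tfrac{\delta}{2}$ from the outset so that it disappears from both $\Phi$'s by the reduction rule $\Phi(\cdots|\cdots,\tfrac{\delta}{2})=\Phi(\cdots|\cdots)$, and you correctly note that this placement of the degenerate parameters is what keeps the prefactors $\prod_{1\le p<q\le 3}[b_p+b_q]_{N+r}$ and $\prod_{p=1}^{3}[a_{0}+a_p]_{M}$ away from the vanishing value $[0]_{k}=0$.
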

\begin{proof}
For any non-negative integers $r ,s$,
we take two multi-indices $\alpha \in \mathbb{N}^m$
and $\beta \in \mathbb{N}^n$ such that $|\alpha|=N+r+s-l$ and $|\beta|=N+r$,
where $l=0, 1, 2, \ldots, r+s$.
We rewrite 
\begin{equation*}
\Phi_\alpha \left(x \middle|\,
\tfrac{\delta}{2}+a-(l+1)\delta, \tfrac{\delta}{2}-a,
(\tfrac{\delta}{2}+u+r\delta, \tfrac{\delta}{2}-u),
(\tfrac{\delta}{2}+v+s\delta,\tfrac{\delta}{2}-v),
(\tfrac{\delta}{2}+y_k+\beta_k\delta, \tfrac{\delta}{2}-y_k)_{1\le k\le n} \right)
\end{equation*}
in two ways,
one by applying \eqref{eq:dual transformation of type C ver Phi 2} with
$\bs{a}=(a, (l+1)\delta-a, v, -s\delta-v)$ and the other
with $\bs{a}=(a, (l+1)\delta-a, u, -r\delta-u)$.
Then we have
\begin{align*}
&\dfrac{
[v+\delta\pm v]_s
[u+\delta\pm v]_r
\prod_{k=1}^{n}
[y_k+\delta\pm v]_{\beta_k}
}{
[\delta-a\pm v]_l
\prod_{i=1}^{m}
[x_i+\tfrac{\delta}{2}\pm v]_{\alpha_i}
}
\nonumber
\\
&\cdot
\Phi_{(\beta,r)}\left(y,u
\middle|\,
v,
-v-s\delta,
(\tfrac{\delta}{2}+x_i+\alpha_i\delta, \tfrac{\delta}{2}-x_i)_{1 \le i\le m},
((l+1)\delta-a, a) \right) \nonumber\\
&=\dfrac{
[u+\delta\pm u]_r
[v+\delta\pm u]_s
\prod_{k=1}^{n}
[y_k+\delta\pm u]_{\beta_k}
}{
[\delta-a\pm u]_l
\prod_{i=1}^{m}
[x_i+\tfrac{\delta}{2}\pm u]_{\alpha_i}
}
\nonumber\\
&\cdot
\Phi_{(\beta,s)}
\left(y,v
\middle|\,
u,
-u-r\delta,
(\tfrac{\delta}{2}+x_i+\alpha_i\delta, \tfrac{\delta}{2}-x_i)_{1\le i\le m},
((l+1)\delta-a, a) \right)
\end{align*}
after non-trivial cancellation.
Setting $x_{m+1}= \tfrac{\delta}{2}-a, \alpha_{m+1}=l$
and replacing $m+1$ by $m$,
we obtain formula \eqref{eq:transformation N+r to N+s}.
\end{proof}
By setting $\beta=0$ in Theorem \ref{Theo:transformation N+r to N+s}, 
from \eqref{eq:Phi to V} we obtain the following transformation.
\begin{coro}\label{Coro:transformation 2m+8V2m+7}
For any non-negative integers $r \ge 0$ and $s \ge 0$,
take a multi-index $\alpha \in \mathbb{N}^{m}$ such that $|\alpha|=r+s$.
For the variables $x=(x_1, \ldots, x_m)$ and two parameters $u$ and $v$,
the following identity holds\,$:$
\begin{align*}
&
\dfrac{
[v+\delta\pm v]_s
[u+\delta\pm v]_r
}{
\prod_{i=1}^{m}
[x_i+\tfrac{\delta}{2}\pm v]_{\alpha_i}
}
\nonumber\\
&\cdot
{}_{2m+8}V_{2m+7}(2u;u+v, u-v-s\delta,
(u+\tfrac{\delta}{2}+x_i+\alpha_i\delta, u+\tfrac{\delta}{2}-x_i)_{1 \le i\le m}
,-r\delta)
\nonumber\\
&=
\dfrac{
[u+\delta\pm u]_r
[v+\delta\pm u]_s
}{
\prod_{i=1}^{m}
[x_i+\tfrac{\delta}{2}\pm u]_{\alpha_i}
}
\nonumber\\
&\cdot
{}_{2m+8}V_{2m+7}(2v;v+u, v-u-r\delta,
(v+\tfrac{\delta}{2}+x_i+\alpha_i\delta, v+\tfrac{\delta}{2}-x_i)_{1 \le i\le m}
,-s\delta). %\label{eq:transformation 2m+8V2m+7}
\end{align*}
\end{coro}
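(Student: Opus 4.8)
The plan is to obtain this corollary as a direct specialization of Theorem \ref{Theo:transformation N+r to N+s} by setting $\beta=0$, so that the $y$-variables are eliminated entirely. Concretely, I would take $n=0$ in the theorem, which forces $N=|\beta|=0$ and hence $|\alpha|=r+s$, matching the hypothesis of the corollary. (Equivalently, one may keep $n>0$ with all components $\beta_k=0$ and repeatedly invoke the reduction $\Phi_{(\beta_1,\ldots,\beta_{n-1},0)}=\Phi_{(\beta_1,\ldots,\beta_{n-1})}$ noted after the definition of $\Phi_\alpha$; both routes lead to the same statement.)

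With $\beta=0$, every product $\prod_{k=1}^{n}$ appearing in the prefactors of \eqref{eq:transformation N+r to N+s} becomes an empty product equal to $1$. Thus the factors $[y_k+\delta\pm v]_{\beta_k}$ and $[y_k+\delta\pm u]_{\beta_k}$ disappear, leaving precisely the prefactors displayed in the corollary. Simultaneously, the multi-indices $(\beta,r)$ and $(\beta,s)$ collapse to the single-component indices $(r)$ and $(s)$, and the paired arguments $(\tfrac{\delta}{2}+y_k+\beta_k\delta,\tfrac{\delta}{2}-y_k)$ in the $\Phi$-functions drop out. What remains on each side is a single-block function $\Phi_{(r)}(u\,|\,\cdots)$ and $\Phi_{(s)}(v\,|\,\cdots)$, respectively.

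The final step is to rewrite these single-block functions as very-well-poised elliptic series by means of \eqref{eq:Phi to V}. Counting the second arguments of $\Phi_{(r)}(u\,|\,\cdots)$, namely $v$, $-v-s\delta$, together with the $2m$ entries $(\tfrac{\delta}{2}+x_i+\alpha_i\delta,\tfrac{\delta}{2}-x_i)_{1\le i\le m}$, gives $2+2m$ parameters. Hence \eqref{eq:Phi to V} produces a ${}_{(2m+2)+6}V_{(2m+2)+5}={}_{2m+8}V_{2m+7}$ series with base point $2u$, upper arguments $u+v$, $u-v-s\delta$, $(u+\tfrac{\delta}{2}+x_i+\alpha_i\delta,\,u+\tfrac{\delta}{2}-x_i)_{1\le i\le m}$, and terminating argument $-r\delta$. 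The analogous computation on the right-hand side, now centered at the variable $v$, yields the companion ${}_{2m+8}V_{2m+7}$ series with argument $-s\delta$.

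There is essentially no substantive obstacle here beyond careful bookkeeping. The only point requiring genuine attention is verifying that the parameter count fed into \eqref{eq:Phi to V} is exactly $2m+2$, so that the index of the resulting $V$-series is $2m+8$ (upper) and $2m+7$ (lower), and that each upper argument is correctly shifted by $u$ (respectively $v$) under the identification $\Phi_{(N)}(x\,|\,u_1,\ldots)={}_{n+6}V_{n+5}(2x;x+u_1,\ldots,-N\delta)$. Since these verifications are immediate, the corollary follows at once.
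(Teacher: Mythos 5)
Your proposal is correct and follows exactly the paper's route: the corollary is obtained by setting $\beta=0$ in Theorem \ref{Theo:transformation N+r to N+s} and then converting the resulting single-block functions $\Phi_{(r)}$ and $\Phi_{(s)}$ into ${}_{2m+8}V_{2m+7}$ series via \eqref{eq:Phi to V}. Your bookkeeping of the empty products, the collapsed multi-indices, and the parameter count $2m+2$ is all accurate.
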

By setting $s=0$ in Corollary \ref{Coro:transformation 2m+8V2m+7}, 
we obtain the following summation formula.
\begin{coro}\label{Coro:summation 2m+8V2m+7}
Take a multi-index $\alpha \in \mathbb{N}^{m}$ such that $|\alpha|=M$.
For the variables $x=(x_1, \ldots, x_m)$ and two parameters $u$ and $v$,
the following identity holds\,$:$
\begin{align*}
&
{}_{2m+8}V_{2m+7}(2u;u+v, u-v,
(u+\tfrac{\delta}{2}+x_i+\alpha_i\delta, u+\tfrac{\delta}{2}-x_i)_{1 \le i\le m}
,-M\delta)
\nonumber\\
&=
\dfrac{[u+\delta\pm u]_M}
{[u+\delta\pm v]_M}
\prod_{i=1}^{m}
\dfrac{[x_i+\tfrac{\delta}{2}\pm v]_{\alpha_i}}
{[x_i+\tfrac{\delta}{2}\pm u]_{\alpha_i}}.
%\label{eq:summation 2m+8V2m+7}
\end{align*}
\end{coro}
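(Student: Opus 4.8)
The plan is to obtain this summation formula as the degenerate case $s=0$ of the transformation formula in Corollary \ref{Coro:transformation 2m+8V2m+7}. Setting $s=0$ turns the constraint $|\alpha|=r+s$ into $|\alpha|=r$, so that the common value $M=r$ matches the hypothesis $|\alpha|=M$ of the present statement, and the terminating parameter $-r\delta$ of the left-hand series becomes $-M\delta$, reproducing exactly the series to be summed.

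First I would simplify the elliptic factorial prefactors carrying the index $s$. Since $[\,w\,]_0=1$ is an empty product for every argument $w$, the factor $[v+\delta\pm v]_s$ on the left and the factor $[v+\delta\pm u]_s$ on the right both reduce to $1$, while $[u+\delta\pm v]_r$ and $[u+\delta\pm u]_r$ become $[u+\delta\pm v]_M$ and $[u+\delta\pm u]_M$ respectively. After this bookkeeping the left-hand side already carries the $V$-series appearing in the statement, multiplied by $[u+\delta\pm v]_M/\prod_{i}[x_i+\tfrac{\delta}{2}\pm v]_{\alpha_i}$.

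The crucial step---and the only one requiring any thought---is to show that the very-well-poised series on the right-hand side collapses when $s=0$. Its terminating parameter is $-s\delta=0$, so from the definition of ${}_{r+5}V_{r+4}$ the numerator of the $k$-th summand contains the factor $[-s\delta]_k=[\,0\,]_k=[0][\delta]\cdots[(k-1)\delta]$. Because this product contains the zero factor $[0]$ for every $k\ge 1$ (here $\mathbb{Z}\delta\cap\Omega=\{0\}$ guarantees $[j\delta]\neq 0$ for $j\neq 0$), all terms with $k\ge 1$ vanish and only the $k=0$ term survives. At $k=0$ every elliptic factorial $[\,\cdot\,]_k$ equals $1$ and the well-poised prefactor is $[a_0]/[a_0]=1$, so the entire series on the right equals $1$.

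Finally I would substitute these simplifications back into Corollary \ref{Coro:transformation 2m+8V2m+7} and divide both sides by the coefficient of the surviving series on the left, which isolates
\[
{}_{2m+8}V_{2m+7}(2u;u+v,u-v,(u+\tfrac{\delta}{2}+x_i+\alpha_i\delta,u+\tfrac{\delta}{2}-x_i)_{1\le i\le m},-M\delta)
=\frac{[u+\delta\pm u]_M}{[u+\delta\pm v]_M}\prod_{i=1}^m\frac{[x_i+\tfrac{\delta}{2}\pm v]_{\alpha_i}}{[x_i+\tfrac{\delta}{2}\pm u]_{\alpha_i}},
\]
which is the assertion. The rearrangement of the elliptic factorial factors is entirely routine; the truncation of the right-hand series to its single $k=0$ term is the one genuine point of the argument, and I do not anticipate any further obstacle.
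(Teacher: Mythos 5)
Your proposal is correct and is exactly the paper's route: the authors obtain this corollary by setting $s=0$ in Corollary \ref{Coro:transformation 2m+8V2m+7}, and your observations that $[\,\cdot\,]_0=1$ and that the terminating parameter $-s\delta=0$ forces the right-hand ${}_{2m+8}V_{2m+7}$ series to truncate to its $k=0$ term (equal to $1$) are precisely the details behind that one-line derivation.
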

Corollary \ref{Coro:transformation 2m+8V2m+7} and
\ref{Coro:summation 2m+8V2m+7} have been proved by Rosengren and Schlosser in \cite{RS1}.
\section{Duality transformation formulas of type $BC$}
\subsection{Duality transformation of type $BC$ on multi-indices}\label{subsection:Duality transformation of type $BC$ on multi-indices}
In this subsection,
we derive a $BC$ type duality transformation formula from the case $r=N$ of Theorem \ref{Theo:BC set}:
\begin{align}
&\sum_{I_+ \cup I_- \cup I_0 = \{1, \ldots, N \}}
\prod_{i \in I_+} A^+(z_i; \bs{a}) \prod_{i \in I_0} A^0(z_i;\bs{a} | \bs{c})
\prod_{i \in I_-} A^-(z_i;\bs{a})\nonumber\\
&\cdot
\prod_{\{i, j \} \subset I_+} \frac{[z_i +z_j +2\delta]}{[z_i +z_j]}
\prod_{\{i, j \} \subset I_-} \frac{[z_i +z_j -2\delta]}{[z_i +z_j]}
\prod_{\substack{i \in I_+ \\ j \in I_-} }\frac{[z_i - z_j +2\delta]}{[z_i - z_j]} \nonumber\\
&\cdot
\prod_{\substack{i \in I_+ \\ j \in I_0} }
\frac{[z_i + \delta \pm z_j]}{[z_i \pm z_j]}
\prod_{\substack{i \in I_- \\ j \in I_0} }
\frac{[z_i - \delta \pm z_j]}{[z_i \pm z_j]}
\prod_{\substack{i \in I_+ \\ 1 \le k \le N}}
\frac{[z_i \pm w_k]}{[z_i + \delta \pm w_k]}
\prod_{\substack{i \in I_- \\ 1 \le k \le N}}
\frac{[z_i \pm w_k]}{[z_i - \delta \pm w_k]} \nonumber\\
&=
\sum_{K_+ \cup K_- \cup K_0 = \{1, \ldots, N \}}
\prod_{k \in K_+} A^+(w_k;\bs{b}) \prod_{k \in K_0} A^0(w_k;\bs{b} | \bs{c})
\prod_{k \in K_-} A^-(w_k; \bs{b})\nonumber\\
&\cdot
\prod_{\{k, l \} \subset K_+} \frac{[w_k +w_l +2\delta]}{[w_k+w_l]} 
\prod_{\{k, l \} \subset K_-} \frac{[w_k+w_l- 2\delta]}{[w_k+w_l]}
\prod_{\substack{k \in K_+ \\ l \in K_-}}\frac{[w_k-w_l +2\delta]}{[w_k-w_l]} \nonumber\\
&\cdot
\prod_{\substack{k \in K_+ \\ l \in K_0}}
\frac{[w_k+\delta \pm w_l ]}{[w_k \pm w_l]}
\prod_{\substack{k \in K_- \\ l \in K_0}}
\frac{[w_k-\delta \pm w_l ]}{[w_k \pm w_l]} 
\prod_{\substack{k \in K_+ \\ 1 \le i \le N}}
\frac{[w_k \pm z_i]}{[w_k +\delta \pm z_i]}
\prod_{\substack{k \in K_- \\ 1 \le i \le N}}
\frac{[w_k \pm z_i]}{[w_k -\delta \pm z_i]}.
\label{eq:BC set N}
\end{align}
We replace the index set $\{1, \ldots, N\}$ by $I:=\{0, 1, \ldots, N-1 \}$. 
Then \eqref{eq:BC set N} is expressed as
\begin{align}
&\sum_{\epsilon \in \{\pm, 0\}^I}
\prod_{i \in I} A^{\epsilon_i} (z_i;\bs{a}) \prod_{\{i, j\} \subset I} 
\frac{[(z_i+\epsilon_i \delta) \pm (z_j +\epsilon_j \delta)]}{[z_i \pm z_j]} 
\prod_{\substack{i \in I \\ k \in I}} \frac{[z_i \pm w_k]}{[z_i +\epsilon_i \delta \pm w_k]} \nonumber\\
&=\sum_{\epsilon \in  \{ \pm , 0\}^I}
\prod_{k \in I} A^{\epsilon_k} (w_k; \bs{b}) \prod_{\{k, l\} \subset I} 
\frac{[(w_k+\epsilon_k \delta) \pm (w_l +\epsilon_l \delta)]}{[w_k \pm w_l]} 
\prod_{\substack{k \in I \\ i \in I}} \frac{[w_k \pm z_i]}{[w_k +\epsilon_k \delta \pm z_i]}.
\label{eq:BC set N2}
\end{align}
We take two multi-indices
$\alpha =(\alpha_1, \ldots, \alpha_m) \in \mathbb{N}^m$ and
$\beta=(\beta_1, \ldots, \beta_n) \in \mathbb{N}^n$ with
$|\alpha| = |\beta|= N$,
where $|\alpha|=\sum_{i=1}^m \alpha_i$ and 
$|\beta|=\sum_{k=1}^n \beta_k$.
We specialize $z =(z_0, \ldots, z_{N-1})$ and
$w=(w_0 \ldots, w_{N-1})$ in \eqref{eq:BC set N2} as follows:
\begin{equation}\label{eq:multiple principal specialization}
\begin{split}
z&=(x)_\alpha :=
(x_1, x_1+\delta, \ldots, x_1+(\alpha_1-1)\delta;
\ldots \ldots; x_m, x_m+\delta, \ldots, x_m+(\alpha_m-1)\delta), \\
w&=(y)_\beta :=
(y_1, y_1+\delta, \ldots, y_1+(\beta_1-1)\delta;
\ldots \ldots; y_n, y_n+\delta, \ldots, y_n+(\beta_n-1)\delta). 
\end{split}
\end{equation}

We first consider the principal specialization $z = (x, x+ \delta, \ldots, x+(N-1)\delta)$ of a single block.
In the left-hand side of \eqref{eq:BC set N2},
the term corresponding to $\epsilon=(\epsilon_0, \ldots, \epsilon_{N-1})$ vanishes if the sign sequence includes
some of the following patterns:
\begin{equation}\label{eq:condition of sign}
\begin{aligned}
+\, 0 &:\  \epsilon_i=+,\   \epsilon_{i+1}=0 \quad & ( &0 \le i \le N-2), \\
0\, - &:\  \epsilon_i=0,\  \epsilon_{i+1}=- \quad & ( & 0 \le i \le N-2), \\
+\, *\, - &:\  \epsilon_i=+,\  \epsilon_{i+1} \in \{\pm , 0 \},\  \epsilon_{i+2}=- \quad & ( & 0 \le i \le N-3).
\end{aligned}
\end{equation}
Supposing that a sign sequence $\epsilon=(\epsilon_0, \ldots, \epsilon_{N-1})$
does not contain any pattern in \eqref{eq:condition of sign},
we count the number $r$ of occurrences of the pattern $+-$.
If it does not contain the pattern $+-\ (r=0)$,
it is an increasing sequence
\begin{equation*}
(\epsilon_0, \ldots, \epsilon_{N-1})=(- \cdots - \overset{\nu}{0}  \cdots 0 \overset{\mu}{+} \cdots +).
\end{equation*}
Then such a sequence $\epsilon=(\epsilon_0, \ldots, \epsilon_{N-1})$
is determined by two non-negative integers $\nu$ and $\mu$ such that
$0 \le \nu \le \mu \le N$;
the number of $-$ and $+$ signs are given by $\nu$ and by $N-\mu$, respectively.
Namely,
\begin{equation*}
I_-=[0, \nu), \quad I_0=[\nu, \mu), \quad I_+=[\mu, N).
\end{equation*}
When $r >0$,
we number the places of $-$ in the patterns $+-$ as follows:
\begin{equation*}
| \overset{0}{\cdot}\cdot \cdot \cdots | + \overset{i_1}{-} | \cdots | +\overset{i_2}{-} | \cdots \cdots \cdots | + \overset{i_r}{-} | 
\cdots \cdot \cdot \overset{N-1}{\cdot} |,
\end{equation*}
where $\{i_p\}_{1 \le p \le r}$ is an increasing sequence of positive integers such that
\begin{equation*}
0 < i_1 < i_2 < \cdots < i_r < N; \quad i_{p+1}-i_p \ge 2\ (p=1, \ldots, r-1).
\end{equation*}
Then by the assumption that it does not contain the third pattern of \eqref{eq:condition of sign},
we find that $\epsilon_{i_{p}-2}\neq +$ and $\epsilon_{i_{p}+1} \neq -$.
Hence  such a sign sequence $\epsilon = (\epsilon_0, \ldots, \epsilon_{N-1})$ should be of the form
\begin{equation}\label{eq:sequence}
| \underbrace{\overset{0}{-}\cdots -}_{\mu^-} \overset{\nu}{0} \cdots 0 | + \overset{i_1}{-} |0 \cdots 0 |
 +\overset{i_2}{-} |0 \cdots \cdots \cdots 0
| + \overset{i_r}{-} | 0 \cdots 0 \underbrace{\overset{\mu}{+} \cdots \overset{N-1}{+}}_{\mu^+} |.
\end{equation}
We denote the number of $-$ in $[0, i_1-2]$ by $\mu^{-}$ and
the number of $+$ in $[i_{r}+1, N-1]$ by $\mu^+$. 
If we set $\nu= \mu^-$ and $\mu=N-\mu^+$, 
$\epsilon=(\epsilon_0, \ldots, \epsilon_{N-1})$ is determined by the non-negative integer sequence
\begin{equation*}
0 \le \nu < i_1 < i_2 < \cdots < i_r < \mu \le N; \quad i_{p+1}-i_p \ge 2 \quad (p=1, \ldots, r-1).
\end{equation*}
In terms of this sequence,
the corresponding subsets $I_-, I_0, I_+$ are given by
\begin{equation*}
\begin{split}
&I_-=[0, \nu) \cup \{i_p | p=1, \ldots, r \}, \\
&I_0=[\nu, \mu) \setminus \{i_p , i_p-1| p=1, \ldots, r \},\\
&I_+=[\mu, N) \cup \{i_p-1 | p=1, \ldots, r \}.
\end{split}
\end{equation*}

We now analyze the contribution of the $+-$ patterns in the expression
\begin{equation}\label{eq:remark factor}
\prod_{\{i, j\} \subset I} 
\frac{[(z_i+\epsilon_i \delta) \pm (z_j +\epsilon_j \delta)]}{[z_i \pm z_j]} 
\prod_{\substack{i \in I \\ k \in I}} \frac{[z_i \pm w_k]}{[z_i +\epsilon_i \delta \pm w_k]}.
\end{equation}
Supposing that $(\epsilon_{i}, \epsilon_{i+1}) =(+, -)$, we set $(z_i, z_{i+1})=(a, a+\delta)$.
Then
\begin{equation*}
\frac{\Delta(z_i+\epsilon_i\delta, z_{i+1}+\epsilon_{i+1}\delta)}{\Delta(z_i, z_{i+1})} =
\frac{\Delta(a+\delta, a)}{\Delta(a, a+\delta)}=-1,
\end{equation*}
where $\Delta(z_i, z_j) := [z_i \pm z_j]$.
We also obtain
\begin{align*}
\frac{\Delta(z_i+\epsilon_i \delta, z_k+\epsilon_k\delta)}{\Delta(z_i, z_k)}
\frac{\Delta(z_{i+1}+\epsilon_{i+1} \delta, z_k+\epsilon_k\delta)}{\Delta(z_{i+1}, z_k)}
&=
\frac{\Delta(a+\delta, z_k+\epsilon_k \delta)}{\Delta(a, z_k)}
\frac{\Delta(a, z_k+\epsilon_k \delta)}{\Delta(a+\delta, z_k)} \nonumber\\
&=\frac{\Delta(z_i, z_k+\epsilon_k\delta)}{\Delta(z_i, z_k)}
\frac{\Delta(z_{i+1}, z_k+\epsilon_k\delta)}{\Delta(z_{i+1}, z_k)},
\end{align*}
for $k \neq i, i+1$ and
\begin{equation*}
\begin{split}
\frac{\Delta(z_i+\epsilon_i\delta, w_l)}{\Delta(z_i, w_l)}
\frac{\Delta(z_{i+1}+\epsilon_{i+1}\delta, w_l)}{\Delta(z_{i+1}, w_l)}
&=
\frac{\Delta(a+\delta, w_l)}{\Delta(a, w_l)}
\frac{\Delta(a, w_l)}{\Delta(a+\delta, w_l)}=1
\end{split}
\end{equation*}
for $l=0, \ldots, N-1$.
This means that the pattern $+-$ in the pair $(z_i, z_{i+1})$  
gives the same effects as the pattern $00$ to other variables $z_k\ (k \neq i, i+1)$ and $w_l\ (0 \le l \le N-1)$.
Hence we see that 
the expression \eqref{eq:remark factor} for the sign sequence 
$\epsilon=(\epsilon_0, \ldots, \epsilon_{N-1})$ \eqref{eq:sequence} coincides with
that for the increasing sequence $\epsilon=(- \cdots - 0 \cdots 0 + \cdots +)$
obtained by replacing $00$ for $+-$ (with same $\nu, \mu$)
up to multiplication by $(-1)^r$.

We next consider applying the multiple principal specialization 
\eqref{eq:multiple principal specialization} to \eqref{eq:BC set N2}.
We replace the index set $I=\{0, \ldots, N-1\}$ by
\begin{equation*}
I= \bigcup_{i=1}^m I^{(i)}, \quad I^{(i)} = \{(i, k)| k \in [0, \alpha_i) \}
\end{equation*}
and set $z_{(i, k)} := x_i + (k-1)\delta$.
By the same argument above,
we find that
the term corresponding to a sign sequence $\epsilon$ vanishes
if it contains  either $+0, 0+$ or $+*-$ in some block.
For each $i=1, \ldots, m$,
we denote by $r_i$ the number of patterns $+-$ 
in the $i$-th block $I^{(i)}$.
Similarly to the case of a single block,
we number the positions of $-$ in the patterns $+-$
by $\xi_{i, p} \ (1\le i \le m, 1 \le p \le r_i)$.
For two multi-indices $\nu, \mu \in \mathbb{N}^m$,
we write $\nu \le \mu$ 
if $\nu_i \le \mu_i$ for all $i=1, \ldots, m$.
Then the set of sign sequences $\epsilon$ that give rise to non-zero terms is parametrized graphically as
\begin{equation}\label{eq:sing sequence}
|\cdots \cdots \cdots ;
 |\overset{0}{-}\cdots - \overset{\nu_i}{0} \cdots 0 | 
+ \overset{\xi_{i,1}}{-} |0 \cdots 0 |
+\overset{\xi_{i,2}}{-} |0 \cdots \cdots \cdots 0
| + \overset{\xi_{i,r_i}}{-} | 0 \cdots 0 \overset{\mu_i}{+} \cdots \overset{\alpha_i-1}{+}|;
\cdots \cdots \cdots |  
\end{equation}
by two multi-indices $\nu, \mu \in \mathbb{N}^m$ with
$0 \le \nu \le \mu \le \alpha$
and a sequence $(\xi_{i, p})_{1 \le i \le m, \, 1 \le p \le r_i}$ of positive integers such that
\begin{equation*}
0 \le \nu_i < \xi_{i,1} < \xi_{i,2} < \cdots < \xi_{i,r_i} < \mu_{i} \le \alpha_{i}; \quad \xi_{i,p+1}-\xi_{i, p} \ge 2\ 
(1 \le i \le m,\ 1 \le p \le r_i).\\
\end{equation*}
As we have seen above, each $+-$ pattern  in a block
behaves like $00$ in relation to other variables.
Denoting the term corresponding to
\begin{equation*}
|\cdots \cdots \cdots ;
 |\overset{0}{-}\cdots - \overset{\nu_i}{0} \cdots 
 \cdots 0 \overset{\mu_i}{+} \cdots \overset{\alpha_i-1}{+}|;
\cdots \cdots \cdots |
\end{equation*}
by $F^{\alpha}_{\mu, \nu}(x;y)$,
we find that
the term corresponding to the sign sequence \eqref{eq:sing sequence} is equal to 
\begin{equation*}
(-1)^{\sum_i r_i}
\prod_{i=1}^m
\prod_{p=1}^{r_i}
\frac{A^+(x_i+(\xi_{i,p}-1)\delta;\bs{a})A^-(x_i+\xi_{i,p}\delta;\bs{a})}
{A^0(x_i+(\xi_{i,p}-1)\delta;\bs{a} | \bs{c})A^0(x_i+\xi_{i,p}\delta;\bs{a} | \bs{c})}
F^{\alpha}_{\mu, \nu}(x;y).
\end{equation*}
By using the formula
\begin{equation*}
\prod_{1 \le i \neq j \le m}\frac{[x_i-x_j-\mu_j \delta]_{\mu_i}}{[x_i -x_j +\delta]_{\mu_i}}
= \prod_{1 \le i < j \le m}\frac{[x_i-x_j]}{[(x_i+\mu_i\delta)-(x_j+\mu_j \delta)]},
\end{equation*}
we can explicitly compute $F^{\alpha}_{\mu, \nu}(x;y)$ as follows:
\begin{align*}
F^{\alpha}_{\mu, \nu}(x;y)
&=(-1)^{|\nu|+|\alpha-\mu|}
\prod_{i=1}^m A^-(x_i;\bs{a})_{\nu_i} A^0(x_i+\nu_i \delta;\bs{a} | \bs{c})_{\mu_i-\nu_i}
A^+(x_i+\mu_i\delta;\bs{a})_{\alpha_i -\mu_i} \nonumber \\
&\cdot \prod_{i=1}^m
\frac{[2x_i+2(\nu_i-1)\delta]}{[2x_i-2\delta]}
\frac{[2x_i+2\mu_i\delta]}{[2x_i+2\alpha_i\delta]} \nonumber\\
&\cdot\prod_{1 \le i< j \le m}
\frac{[(x_i+\nu_i\delta-\delta) \pm (x_j+\nu_j\delta -\delta)]}
{[(x_i-\delta) \pm (x_j-\delta)]}
\frac{[(x_i+\mu_i\delta) \pm (x_j+\mu_j\delta)]}
{[(x_i+\alpha_i \delta) \pm (x_j+\alpha_j\delta)]}\nonumber\\
&\cdot \prod_{1 \le i, j \le m}
\frac{[(x_i-\delta) \pm (x_j+\alpha_j\delta)]}
{[(x_i-\delta) \pm (x_j+\mu_j \delta)]}
\frac{[(x_i+\nu_i\delta-\delta) \pm (x_j+\mu_j\delta)]}
{[(x_i+\nu_i\delta-\delta) \pm (x_j+\alpha_j\delta)]} \nonumber\\
&\cdot\prod_{1 \le i, j \le m}
\frac{[x_i+ x_j-2\delta]_{\nu_i}}
{[x_i+x_j+(\alpha_j -1)\delta]_{\nu_i}}
\frac{[x_i- x_j-\alpha_j \delta]_{\nu_i}}
{[x_i-x_j+ \delta]_{\nu_i}}\nonumber\\
&\cdot \prod_{1 \le i, j \le m}
\frac{[-x_i- x_j-(\alpha_i+\alpha_j)\delta]_{\alpha_i-\mu_i}}
{[-x_i- x_j-(\alpha_i-1)\delta]_{\alpha_i-\mu_i}}
\frac{[-x_i+ x_j-\alpha_i\delta]_{\alpha_i-\mu_i}}
{[-x_i + x_j + (\alpha_j-\alpha_i+1)\delta]_{\alpha_i-\mu_i}}\nonumber\\
&\cdot
\prod_{\substack{1 \le i \le m \\ 1 \le k \le n}}
\frac{[x_i+y_k+(\beta_k-1)\delta]_{\nu_i}}
{[x_i+y_k-\delta]_{\nu_i}}
\frac{[x_i-y_k]_{\nu_i}}
{[x_i-y_k-\beta_k\delta]_{\nu_i}} \nonumber\\
&\cdot
\prod_{\substack{1 \le i \le m \\ 1 \le k \le n}}
\frac{[-x_i-y_k-(\alpha_i-1)\delta]_{\alpha_i-\mu_i}}
{[-x_i-y_k-(\alpha_i+\beta_k-1)\delta]_{\alpha_i-\mu_i}}
\frac{[-x_i+y_k-(\alpha_i-\beta_k)\delta]_{\alpha_i-\mu_i}}
{[-x_i+y_k-\alpha_i\delta]_{\alpha_i-\mu_i}},
\end{align*}
where we used the shorthand notation $f(u)_{k}:= \prod_{i=0}^{k-1} f(u+i \delta)\ (k =0, 1, 2, \ldots )$.
Applying the same specialization to the right-hand side of \eqref{eq:BC set N2},
we obtain the following duality transformation formula of type $BC$. 
\begin{theo}[Duality transformation formula of type $BC$]
\label{Theo:BC multi1}
For any complex parameters $\bs{a} = (a_\pa, a_\pb, \ldots, a_\ph)$,
we define the parameters $\bs{b} =(b_\pa, b_\pb, \ldots, b_\ph)$ by $b_p = \delta - a_p\ (p=\pa, \pb, \ldots, \ph)$.
Take two multi-indices $\alpha \in \mathbb{N}^m$ and 
$\beta \in \mathbb{N}^n$ with $|\alpha|=|\beta|$.
Under the balancing condition $\sum_{p=\pa}^\ph a_p =4\delta$,
the following identity holds
for two sets of variables $x=(x_1, \ldots, x_m)$ and $y=(y_1, \ldots, y_n)$\,$:$
\begin{align*}
&\sum_{0 \le \nu \le \mu \le  \alpha}
(-1)^{|\nu|+|\alpha-\mu|}
\prod_{i=1}^m 
A^-(x_i;\bs{a})_{\nu_i} 
A^+(x_i+\mu_i\delta;\bs{a})_{\alpha_i -\mu_i} C_{{\mu_i}-{\nu_i}}(x_i+\nu_i\delta;\bs{a} | \bs{c})  \nonumber\\
&\cdot 
\prod_{i=1}^m
\frac{[2x_i+2(\nu_i-1)\delta]}{[2x_i-2\delta]}
\frac{[2x_i+2\mu_i\delta]}{[2x_i+2\alpha_i\delta]}\nonumber\\
&\cdot \prod_{1 \le i< j \le m}
\frac{[(x_i+\nu_i\delta-\delta) \pm (x_j+\nu_j\delta -\delta)]}
{[(x_i-\delta) \pm (x_j-\delta)]}
\frac{[(x_i+\mu_i\delta) \pm (x_j+\mu_j\delta)]}
{[(x_i+\alpha_i \delta) \pm (x_j+\alpha_j\delta)]}\nonumber\\
&\cdot \prod_{1 \le i, j \le m}
\frac{[(x_i-\delta) \pm (x_j+\alpha_j\delta)]}
{[(x_i-\delta) \pm (x_j+\mu_j \delta)]}
\frac{[(x_i+\nu_i\delta-\delta) \pm (x_j+\mu_j\delta)]}
{[(x_i+\nu_i\delta-\delta) \pm (x_j+\alpha_j\delta)]}
\frac{[x_i+ x_j-2\delta]_{\nu_i}}
{[x_i+x_j+(\alpha_j -1)\delta]_{\nu_i}}
\frac{[x_i- x_j-\alpha_j \delta]_{\nu_i}}
{[x_i-x_j+ \delta]_{\nu_i}}\nonumber\\
&\cdot \prod_{1 \le i, j \le m}
\frac{[-x_i- x_j-(\alpha_i+\alpha_j)\delta]_{\alpha_i-\mu_i}}
{[-x_i- x_j-(\alpha_i-1)\delta]_{\alpha_i-\mu_i}}
\frac{[-x_i+ x_j-\alpha_i\delta]_{\alpha_i-\mu_i}}
{[-x_i + x_j + (\alpha_j-\alpha_i+1)\delta]_{\alpha_i-\mu_i}}\nonumber\\
&\cdot
\prod_{\substack{1 \le i \le m \\ 1 \le k \le n}}
\frac{[x_i+y_k+(\beta_k-1)\delta]_{\nu_i}}
{[x_i+y_k-\delta]_{\nu_i}}
\frac{[x_i-y_k]_{\nu_i}}
{[x_i-y_k-\beta_k\delta]_{\nu_i}} \nonumber\\
&\cdot
\prod_{\substack{1 \le i \le m \\ 1 \le k \le n}}
\frac{[-x_i-y_k-(\alpha_i-1)\delta]_{\alpha_i-\mu_i}}
{[-x_i-y_k-(\alpha_i+\beta_k-1)\delta]_{\alpha_i-\mu_i}}
\frac{[-x_i+y_k-(\alpha_i-\beta_k)\delta]_{\alpha_i-\mu_i}}
{[-x_i+y_k-\alpha_i\delta]_{\alpha_i-\mu_i}}\nonumber \\
&=
\sum_{0 \le \kappa \le \lambda \le  \beta}
(-1)^{|\kappa|+|\beta-\lambda|}\prod_{k=1}^n
A^-(y_k;\bs{b})_{\kappa_k} 
A^+(y_k+\lambda_k\delta;\bs{b})_{\beta_k -\lambda_k} C_{{\lambda_k}-{\kappa_k}}(y_k+\kappa_k\delta;\bs{b} | \bs{c}) \nonumber\\
&\cdot
\prod_{k=1}^n
\frac{[2y_k+2(\kappa_k-1)\delta]}{[2y_k-2\delta]}
\frac{[2y_k+2\lambda_k\delta]}{[2y_k+2\beta_k\delta]}\nonumber\\
&\cdot \prod_{1 \le k<l \le n}
\frac{[(y_k+\kappa_k\delta-\delta) \pm (y_l+\kappa_l\delta -\delta)]}
{[(y_k-\delta) \pm (y_l-\delta)]}
\frac{[(y_k+\lambda_k\delta) \pm (y_l+\lambda_l\delta)]}
{[(y_k+\beta_k \delta) \pm (y_l+\beta_l\delta)]}\nonumber\\
&\cdot \prod_{1 \le k, l \le n}
\frac{[(y_k-\delta) \pm (y_l+\beta_l\delta)]}
{[(y_k-\delta) \pm (y_l+\lambda_l \delta)]}
\frac{[(y_k+\kappa_k\delta-\delta) \pm (y_l+\lambda_l\delta)]}
{[(y_k+\kappa_k\delta-\delta) \pm (y_l+\beta_l\delta)]}
\frac{[y_k+ y_l-2\delta]_{\kappa_k}}
{[y_k+y_l+(\beta_l -1)\delta]_{\kappa_k}}
\frac{[y_k- y_l-\beta_l \delta]_{\kappa_k}}
{[y_k-y_l+ \delta]_{\kappa_k}}\nonumber\\
&\cdot \prod_{1 \le k, l \le n}
\frac{[-y_k- y_l-(\beta_k+\beta_l)\delta]_{\beta_k-\lambda_k}}
{[-y_k- y_l-(\beta_k-1)\delta]_{\beta_k-\lambda_k}}
\frac{[-y_k+ y_l-\beta_k\delta]_{\beta_k-\lambda_k}}
{[-y_k + y_l + (\beta_l-\beta_k+1)\delta]_{\beta_k-\lambda_k}}\nonumber\\
&\cdot
\prod_{\substack{1 \le k \le n \\ 1 \le i \le m}}
\frac{[y_k+x_i+(\alpha_i-1)\delta]_{\kappa_k}}
{[y_k+x_i-\delta]_{\kappa_k}}
\frac{[y_k-x_i]_{\kappa_k}}
{[y_k-x_i-\alpha_i\delta]_{\kappa_k}} \nonumber\\
&\cdot
\prod_{\substack{1 \le k \le n \\ 1 \le i \le m}}
\frac{[-y_k-x_i-(\beta_k-1)\delta]_{\beta_k-\lambda_k}}
{[-y_k-x_i-(\beta_k+\alpha_i-1)\delta]_{\beta_k-\lambda_k}}
\frac{[-y_k+x_i-(\beta_k-\alpha_i)\delta]_{\beta_k-\lambda_k}}
{[-y_k+x_i-\beta_k\delta]_{\beta_k-\lambda_k}},
\end{align*}
where 
\begin{equation}\label{eq:def C}
C_{\sigma}(z;\bs{a} | \bs{c})=
\sum_{\substack{0 \le r \le \sigma/2}}
(-1)^{r}  
\sum_{(\xi_{p})_{p}}
\prod_{p=1}^{r}
A^+(z+(\xi_{p}-1)\delta;\bs{a})A^-(z+\xi_{p}\delta;\bs{a})
\prod_{\substack{k=0 \\ k \neq \xi_{p}-1, \xi_{p}}}^{\sigma-1}
A^0(z+k \delta;\bs{a} | \bs{c}).
\end{equation}
Here, 
the second summation is taken over all sequences 
$(\xi_{p})_{1 \le p \le r}$ of positive integers satisfying the following conditions\,$:$
\begin{equation*}
\begin{split}
0 < \xi_{1} <  \cdots < \xi_{r} < \sigma, \quad
\xi_{p+1}-\xi_{p}\ge 2 \ (p=1, \ldots, r-1).
\end{split}
\end{equation*}
\end{theo}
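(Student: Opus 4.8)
The plan is to start from the subset-duality identity \eqref{eq:BC set N2}, which is exactly the $r=N$ case of Theorem \ref{Theo:BC set} rewritten over the index set $I=\{0,\ldots,N-1\}$ in terms of sign sequences $\epsilon\in\{\pm,0\}^I$, and to apply to it the multiple principal specialization $z=(x)_\alpha$, $w=(y)_\beta$ of \eqref{eq:multiple principal specialization}. The whole content of the theorem is then to reorganize the surviving terms on each side into the stated sums over $0\le\nu\le\mu\le\alpha$ and $0\le\kappa\le\lambda\le\beta$. Since the statement is preceded by the requisite case analysis, I would freely invoke those preparatory results.

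First I would use the vanishing analysis already carried out: after specialization a term indexed by $\epsilon$ survives only when, within each block $I^{(i)}$, the sign pattern avoids the configurations $+0$, $0-$ and $+\ast-$ of \eqref{eq:condition of sign}, so that the admissible $\epsilon$ are precisely those of the graphical form \eqref{eq:sing sequence}, parametrized by $(\nu,\mu)$ with $0\le\nu\le\mu\le\alpha$ together with the positions $(\xi_{i,p})$ of the $+-$ patterns lying in the middle $0$-regions $[\nu_i,\mu_i)$. Next I would invoke the reduction established just above the statement, namely that each $+-$ pattern acts on all other variables exactly as $00$ does, contributing only an overall factor $-1$ and the local factor $A^+(x_i+(\xi_{i,p}-1)\delta;\bs{a})A^-(x_i+\xi_{i,p}\delta;\bs{a})$ in place of the two $A^0$'s it covers. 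This lets me write the specialized term for \eqref{eq:sing sequence} as $(-1)^{\sum_i r_i}\prod_{i,p}\frac{A^+A^-}{A^0A^0}\,F^{\alpha}_{\mu,\nu}(x;y)$, with $F^{\alpha}_{\mu,\nu}$ the explicit shifted-factorial product computed above.

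The key remaining step is the summation over the $+-$ placements for fixed $(\nu,\mu)$. Here I would observe that the factor $A^0(x_i+\nu_i\delta;\bs{a}\,|\,\bs{c})_{\mu_i-\nu_i}$ occurring in $F^{\alpha}_{\mu,\nu}$ is exactly the all-$0$ term ($r=0$) of the expansion \eqref{eq:def C} of $C_{\mu_i-\nu_i}(x_i+\nu_i\delta;\bs{a}\,|\,\bs{c})$, while the sum over the admissible sequences $(\xi_{i,p})$ in block $i$ — each contributing the sign $-1$ and the replacement of a consecutive pair of $A^0$'s by $A^+A^-$ — reproduces term by term the remaining summands in the alternating sum defining $C_{\mu_i-\nu_i}$. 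Thus, for fixed $(\nu,\mu)$, summing over all $(\xi_{i,p})_{i,p}$ replaces $\prod_i A^0(x_i+\nu_i\delta;\bs{a}\,|\,\bs{c})_{\mu_i-\nu_i}$ by $\prod_i C_{\mu_i-\nu_i}(x_i+\nu_i\delta;\bs{a}\,|\,\bs{c})$, and inserting the explicit form of $F^{\alpha}_{\mu,\nu}(x;y)$ produces precisely the left-hand side of the theorem, signs $(-1)^{|\nu|+|\alpha-\mu|}$ and all.

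Finally, applying the identical chain of steps to the right-hand side of \eqref{eq:BC set N2}, now with $w=(y)_\beta$, with $a_p$ replaced by $b_p=\delta-a_p$, with $\alpha$ replaced by $\beta$, and with the roles of $x,y$ and of $m,n$ interchanged, yields the right-hand side with summation indices $(\kappa,\lambda)$; the balancing condition $\sum_{p}a_p=4\delta$ (equivalently $\sum_p b_p=4\delta$) is simply inherited from Theorem \ref{Theo:BC set}. I expect the main obstacle to be bookkeeping rather than conceptual: one must verify that the telescoping of the principally specialized Cauchy-type and coefficient factors collapses exactly into the shifted-factorial products of $F^{\alpha}_{\mu,\nu}$, and that the $+-\leftrightarrow 00$ reduction and the $C_\sigma$ packaging remain compatible with the boundary factors $[2x_i+2(\nu_i-1)\delta]/[2x_i-2\delta]$ and $[2x_i+2\mu_i\delta]/[2x_i+2\alpha_i\delta]$ and with the global signs, so that no spurious shifts are introduced when the two sides are matched.
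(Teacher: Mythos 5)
Your proposal follows essentially the same route as the paper's own derivation: specialize the $r=N$ subset identity \eqref{eq:BC set N2} via the multiple principal specialization, discard terms containing the patterns of \eqref{eq:condition of sign}, reduce each $+-$ pattern to $00$ up to a sign and the local $A^+A^-$ factor, and package the resulting alternating sum over the placements $(\xi_{i,p})$ into $C_{\mu_i-\nu_i}$ times the explicit product $F^{\alpha}_{\mu,\nu}(x;y)$, with the mirror argument on the $w$-side. This is exactly the argument of Subsection 5.1, so the proposal is correct and not a genuinely different approach.
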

We remark that $C_{\sigma}(z;\bs{a} | \bs{c})$ has the following determinant formula:
\begin{equation}\label{eq:det of C}
C_{\sigma}(z;\bs{a} | \bs{c})
 =\det
  \begin{pmatrix}
    A^0_0 & A^-_{1} &  \\
    A^+_{0} & A^0_{1} & A^-_{2} \\
          & A^+_{1} & A^0_{2} &\ddots \\
          &       & \ddots & \ddots & A^-_{\sigma-1}\\
          &       &        & A^+_{\sigma-2}  & A^0_{\sigma-1}\\
  \end{pmatrix},
\end{equation}
where $A^{\epsilon}_\xi = A^{\epsilon}(z+\xi\delta;\bs{a})\ (\epsilon \in \{\pm, 0 \})$
and we have omitted the parameters $\bs{c}$ in $A^0(z;\bs{a}|\bs{c})$.
This can be shown from the fact that both the right-hand sides of \eqref{eq:def C}
and \eqref{eq:det of C} satisfy the following three-term recurrence relation:
\begin{align*}
    F_{\sigma+2}&=F_{\sigma+1}A_{\sigma+1}^0-F_{\sigma}A_{\sigma}^+A_{\sigma+1}^- \quad (\sigma \ge 0)
\end{align*}
with the initial conditions 
\begin{equation*}
  F_{0}=1,\qquad
  F_{1}=A^0_{0}.
\end{equation*}
\subsection{Case where $a_\ph=a_\pa+\delta$}
When two of the $a$ parameters differ by $\delta$,
the transformation formula of Theorem \ref{Theo:BC multi1}
can be generalized further to the case $|\alpha| \neq |\beta|$.
We first consider to generalize the case $r=N$ of Theorem \ref{Theo:BC set}.
\begin{theo}\label{Theo:BC set M not N}
Let $M$ and $N$ be two non-negative integers with $M \ge N$.
For a set of complex parameters  $\bs{a} = (a_\pa, a_\pb, \ldots, a_\ph)$,
we assume the balancing condition $\sum_{p=\pa}^\ph a_p = (4-2M+2N)\delta$ 
and $a_\ph= a_\pa + \delta$.
We define the parameters $\bs{b} =(b_\pa, b_\pb, \ldots, b_\ph)$ by $b_\pa= \delta-a_\ph,
b_\ph=\delta-a_\pa$ and $b_p = \delta - a_p\ (p= \pb, \ldots, \pg)$.
For two sets of variables $z=(z_1, \ldots, z_M)$ and $w=(w_1, \ldots, w_N)$,
the following identity holds\,$:$
\begin{align}
&\sum_{I_+ \cup I_- \cup I_0 = \{1, \ldots, M \}}
\prod_{i \in I_+} A^+(z_i; \bs{a}) \prod_{i \in I_0} A^0(z_i;\bs{a} | \bs{a_\pa})
\prod_{i \in I_-} A^-(z_i;\bs{a})\nonumber\\
&\cdot
\prod_{\{i, j \} \subset I_+} \frac{[z_i +z_j +2\delta]}{[z_i +z_j]}
\prod_{\{i, j \} \subset I_-} \frac{[z_i +z_j -2\delta]}{[z_i +z_j]}
\prod_{\substack{i \in I_+ \\ j \in I_-} }\frac{[z_i - z_j +2\delta]}{[z_i - z_j]} \nonumber\\
&\cdot
\prod_{\substack{i \in I_+ \\ j \in I_0} }
\frac{[z_i + \delta \pm z_j]}{[z_i \pm z_j]}
\prod_{\substack{i \in I_- \\ j \in I_0} }
\frac{[z_i - \delta \pm z_j]}{[z_i \pm z_j]}
\prod_{\substack{i \in I_+ \\ 1 \le k \le N}}
\frac{[z_i \pm w_k]}{[z_i + \delta \pm w_k]}
\prod_{\substack{i \in I_- \\ 1 \le k \le N}}
\frac{[z_i \pm w_k]}{[z_i - \delta \pm w_k]} \nonumber\\
&=
\prod_{p=\pb}^{\pg}[a_\pa +a_p]_{M-N}
\sum_{K_+ \cup K_- \cup K_0 = \{1, \ldots, N \}}
\prod_{k \in K_+} A^+(w_k;\bs{b}) \prod_{k \in K_0} A^0(w_k;\bs{b} | \bs{b_\pa})
\prod_{k \in K_-} A^-(w_k; \bs{b})\nonumber\\
&\cdot
\prod_{\{k, l \} \subset K_+} \frac{[w_k +w_l +2\delta]}{[w_k+w_l]} 
\prod_{\{k, l \} \subset K_-} \frac{[w_k+w_l- 2\delta]}{[w_k+w_l]}
\prod_{\substack{k \in K_+ \\ l \in K_-}}\frac{[w_k-w_l +2\delta]}{[w_k-w_l]} \nonumber\\
&\cdot
\prod_{\substack{k \in K_+ \\ l \in K_0}}
\frac{[w_k+\delta \pm w_l ]}{[w_k \pm w_l]}
\prod_{\substack{k \in K_- \\ l \in K_0}}
\frac{[w_k-\delta \pm w_l ]}{[w_k \pm w_l]} 
\prod_{\substack{k \in K_+ \\ 1 \le i \le M}}
\frac{[w_k \pm z_i]}{[w_k +\delta \pm z_i]}
\prod_{\substack{k \in K_- \\ 1 \le i \le M}}
\frac{[w_k \pm z_i]}{[w_k -\delta \pm z_i]},
\label{eq:BC set M not N}
\end{align}
where $\bs{a_\pa}=(a_\pa, a_\pa, a_\pa, a_\pa)$ and $\bs{b_\pa}=(b_\pa, b_\pa, b_\pa, b_\pa)$.
\end{theo}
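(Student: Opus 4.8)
The plan is to prove \eqref{eq:BC set M not N} by induction on $M-N$, the inductive step being a single application of the one-variable collapse used in the proof of Theorem~\ref{Theo:BC_1}(2). The base case $M=N$ is the equal-variable duality \eqref{eq:BC set N}: the prefactor $\prod_{p=\pb}^\pg[a_\pa+a_p]_{M-N}$ is then empty, the balancing condition reads $\sum_{p=\pa}^\ph a_p=4\delta$, and the extra hypothesis $a_\ph=a_\pa+\delta$ is merely a restriction of the generic identity; the asymmetric choice of $\bs c$ ($\bs{a_\pa}$ on the left, $\bs{b_\pa}$ on the right) and the twisted form of $\bs b$ are immaterial here because $A^\pm$ and $A^0$ are symmetric in the parameters and $A^0$ depends on $\bs c$ only up to an additive constant. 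To pass from the $(m,n)=(M,N{+}1)$ instance (difference $M-N-1$) to the $(M,N)$ instance (difference $M-N$), I would specialize the last free variable to $w_{N+1}=-a_\pa$.

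The key point is that the hypothesis $a_\ph=a_\pa+\delta$ makes the value $-a_\pa$ coincide with one of the parameters $b_p$, so that $A^-(-a_\pa;\bs b)=0$; choosing $\bs c$ with all entries equal to $-a_\pa$ additionally gives $A^0(-a_\pa;\bs b\,|\,\bs c)=0$. Hence on the right-hand side the specialized index $N+1$ is forced into $K_+$, and only the coefficient $A^+(-a_\pa;\bs b)$ survives there. Summing over the remaining indices then reproduces an identity over the $N$ free $w$-variables, exactly parallel to the passage from \eqref{eq:sub of Theo 2.1 (2)} to \eqref{eq:KI m=1 n=0}: the factorization $\prod_{p=\pb}^\ph[a_\pa+a_p]=[2a_\pa+\delta]\prod_{p=\pb}^\pg[a_\pa+a_p]$ (which again uses $a_\ph=a_\pa+\delta$) cancels the spurious denominator $[2a_\pa+\delta]$ produced by $A^+$ and contributes one further factor to the prefactor.

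On the left-hand side the specialized variable enters only through $\prod_{i\in I_+}\prod_k\frac{[z_i\pm w_k]}{[z_i+\delta\pm w_k]}$ and $\prod_{i\in I_-}\prod_k\frac{[z_i\pm w_k]}{[z_i-\delta\pm w_k]}$, whose $w_{N+1}$-factors recombine with $A^\pm(z_i;\bs a)$; after the accompanying shift of the parameters $a_\pa,a_\ph$ (which lowers the balancing by $2\delta$ per step, from $4\delta$ down to $(4-2M+2N)\delta$ after all $M-N$ steps, while preserving $a_\ph=a_\pa+\delta$), and using the constant-shift identity $A^0_r(x;\dots\,|\,\bs{b_\pa})=A^0_r(x;\bs a\,|\,\bs{a_\pa})$ of the type established in Section~2, the left side reassembles into the coefficients $A^\pm(z_i;\bs a)$, $A^0(z_i;\bs a\,|\,\bs{a_\pa})$ of \eqref{eq:BC set M not N}. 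Iterating the step $M-N$ times accumulates the prefactor into $\prod_{p=\pb}^\pg[a_\pa+a_p]_{M-N}$, and a final relabelling $b_\pa\leftrightarrow b_\ph$ brings $\bs b$ to the twisted form in the statement. Equivalently, all $M-N$ specializations can be performed at once on a principal block $w_{N+j}=-a_\pa+(j-1)\delta$, the block sign-sum being controlled by the tridiagonal structure of the preceding subsection.

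The main obstacle is precisely this recombination of factors at each step. One must check that the $w_{N+1}$-contributions on the right collapse \emph{exactly} onto the single factor adjoined to the prefactor, with no residual dependence on the surviving $w$'s or on the $z$'s, and that the matching telescoping of the rational factors on the left reassembles \emph{precisely} into the shifted operator coefficients. This is the ``non-trivial recombination of factors'' already met in the one-variable case, now carried out in the presence of all the other variables; it rests on the duplication formula \eqref{eq:double-angle formula}, the quasi-periodicity of $[u]$, the relation $a_\ph=a_\pa+\delta$, and the sign-pattern vanishing of the preceding subsection, and executing it in full is the only substantial computation in the proof.
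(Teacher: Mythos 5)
Your proposal follows essentially the same route as the paper's proof: there one starts from \eqref{eq:BC set N2} with $M$ variables on each side, specializes the surplus $w$-variables to an arithmetic $\delta$-progression based at $b_\pa=-a_\pa$, forces those indices into $K_+$ by the vanishing of $A^-$ and $A^0(\,\cdot\,;\bs{b}\,|\,\bs{b_\pa})$ at $b_\pa$ together with the sign-pattern argument, and then carries out exactly the recombination you describe --- the one device you do not name is that both sides are first multiplied by $\prod_{i=1}^M[z_i\pm(b_\pa+(M-N)\delta)]/[z_i\pm b_\pa]$, which is what converts the leftover $z$-dependent factors uniformly (including on $I_0$, via the identity $A^0_r(z_i;\bs{a}\,|\,\bs{b_\pa})\cdot\frac{[z_i\pm(b_\pa+(M-N)\delta)]}{[z_i\pm b_\pa]}=A^0_r(z_i;a_\pa-(M-N)\delta,\ldots\,|\,b_\pa+(M-N)\delta)$) into the shifted coefficients, after which a relabelling of the parameters gives the stated form. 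Your one-variable-at-a-time induction is the same computation performed $\delta$ at a time, and your ``all at once on a principal block'' variant is literally the paper's argument, so the proposal is correct in approach with the acknowledged recombination being the only part left to execute.
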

\begin{proof}
We begin with the duality transformation formula \eqref{eq:BC set N2} of type $BC$ on subsets.
Setting $I=\{1, \ldots, N, N+1, \ldots,  M \}$
and $a_\pb= a_\pa-\delta, c_r =b_\pa\ (r=0, 1, 2, 3)$,
we consider the partial principal specialization $(w_{N+1}, \ldots, w_M) =(b_\pa, b_\pa+\delta, \ldots, b_\pa+(M-N-1)\delta)$
of a single block.
By the same argument as in Subsection \ref{subsection:Duality transformation of type $BC$ on multi-indices},
the term corresponding to $\epsilon=(\epsilon_1, \ldots, \epsilon_M)$ in the right-hand side becomes zero 
if the sign sequence $(\epsilon_{N+1}, \ldots, \epsilon_M)$
includes some of the patterns $+0, 0-$ and $+*-$.
Further, we find that the term with $\epsilon_{N+1} \in \{0, - \}$
or $\epsilon_{N+2}=-$ vanishes by the definition of $A^0(x;\bs{b}|\bs{b_\pa})$ and $A^-(x;\bs{b})$.
Hence the corresponding term vanishes unless $(\epsilon_{N+1}, \epsilon_{N+2})=(+, +)$.
Since the patterns $+0$ and $+\! *\! -$ are not allowed,
only the terms with $(\epsilon_{N+1}, \epsilon_{N+2}, \ldots, \epsilon_{M})=(+, +, \ldots, +)$ remain.
Multiplying the both sides by $\prod_{i=1}^M\frac{[z_i \pm (b_\pa+(M-N)\delta)]}{[z_i \pm b_\pa]}$,
we compute the non-zero terms.
We first compute by using $b_\pa =\delta-a_\pa=-a_\pb$
\begin{align*}
&A_r^0(z_i;\bs{a} | \bs{b_\pa})
\frac{[z_i \pm (b_\pa+(M-N)\delta)]}{[z_i \pm b_\pa]}\nonumber\\
&=\epsilon_r e((\delta - \tfrac{\omega_r}{2} - \tfrac{1}{2} {\textstyle \sum_{p=\pa}^\ph} a_p)\eta_r)
\frac{[z_i \pm (b_\pa+(M-N)\delta)]\prod_{p=\pa}^\ph[\frac{1}{2}(\omega_r-\delta)+a_p]}
{2[\frac{1}{2}(\omega_r-\delta) \pm z_i][\frac{1}{2}(\omega_r-\delta) \pm b_\pa]}\nonumber\\
&=\epsilon_r e((\delta - \tfrac{\omega_r}{2} - \tfrac{1}{2} {\textstyle \sum_{p=\pa}^\ph} a_p)\eta_r)
\frac{[z_i \pm (b_\pa+(M-N)\delta)]\prod_{p=\pa}^\ph[\frac{1}{2}(\omega_r-\delta)+a_p]}
{2[\frac{1}{2}(\omega_r-\delta) \pm z_i][\frac{1}{2}(\omega_r+\delta) -a_\pa]
[\frac{1}{2}(\omega_r-\delta) +a_\pb]}\nonumber\\
&=\epsilon_r e((\delta - \tfrac{\omega_r}{2} - \tfrac{1}{2} {\textstyle \sum_{p=\pa}^\ph} a_p)\eta_r)
\frac{[z_i \pm (b_\pa+(M-N)\delta)]
\prod_{p=\pc}^\ph[\frac{1}{2}(\omega_r-\delta)+a_p]}
{-2\epsilon_r e((\frac{\delta}{2}-a_\pa)\eta_r)[\frac{1}{2}(\omega_r-\delta) \pm z_i]}. 
\end{align*}
Note here that 
\begin{align*}
&\frac{[\tfrac{1}{2}(\omega_r-\delta)+a_\pa-(M-N)\delta]
[\tfrac{1}{2}(\omega_r-\delta)+a_\pb-(M-N)\delta]}
{[\tfrac{1}{2}(\omega_r-\delta)+ b_\pa+(M-N)\delta]
[\tfrac{1}{2}(\omega_r-\delta) - b_\pa-(M-N)\delta)]}  \nonumber\\
&=\frac{[\tfrac{1}{2}(\omega_r-\delta)+a_\pa-(M-N)\delta]}
{[\tfrac{1}{2}(\omega_r-\delta)+ \delta-a_\pa+(M-N)\delta]} \nonumber\\
&=
\epsilon_r e((-\tfrac{\delta}{2}+a_\pa -(M-N)\delta) \eta_r)
\frac{[\tfrac{1}{2}(\omega_r-\delta)+a_\pa-(M-N)\delta]}
{[\tfrac{1}{2}(-\omega_r +\delta)-a_\pa +(M-N)\delta]} \nonumber\\
&=-\epsilon_r e((-\tfrac{\delta}{2}+a_\pa -(M-N)\delta) \eta_r).
\end{align*}
Hence we have
\begin{align*}
&A_r^0(z_i;\bs{a} | \bs{b_\pa})
\frac{[z_i \pm (b_\pa+(M-N)\delta)]}{[z_i \pm b_\pa]} \nonumber\\
&=\epsilon_r e((\delta - \tfrac{\omega_r}{2} - \tfrac{1}{2} {\textstyle \sum_{p=\pa}^\ph} a_p+(M-N)\delta)\eta_r)
\frac{[z_i \pm (b_\pa+(M-N)\delta)]
\prod_{p=\pc}^\ph[\frac{1}{2}(\omega_r-\delta)+a_p]
}
{2[\frac{1}{2}(\omega_r-\delta) \pm z_i]}\nonumber\\
&\cdot
\frac{[\tfrac{1}{2}(\omega_r-\delta)+a_\pa-(M-N)\delta]
[\tfrac{1}{2}(\omega_r-\delta)+a_\pb-(M-N)\delta]}
{[\tfrac{1}{2}(\omega_r-\delta)+ b_\pa+(M-N)\delta]
[\tfrac{1}{2}(\omega_r-\delta) - b_\pa-(M-N)\delta)]}\nonumber \\
&=A^0_r(z_i; a_\pa-(M-N)\delta, a_\pb-(M-N)\delta, a_\pc, \ldots, a_\ph|
b_\pa+(M-N)\delta).
\end{align*}
Therefore we obtain
\begin{align*}
&(LHS) =
\sum_{I_+ \cup I_- \cup I_0 = \{1, \ldots, M \}}
\prod_{i \in I_+} A^+(z_i; \bs{a}) 
\frac{[z_i- b_\pa-(M-N)\delta][z_i- b_\pa-(M-N-1)\delta]}
{[z_i-b_\pa][z_i-b_\pa+\delta]} \nonumber\\
&\cdot
\prod_{i \in I_0} A^0(z_i;\bs{a} | \bs{b_\pa})
\frac{[z_i \pm (b_\pa+(M-N)\delta)]}{[z_i \pm b_\pa]}\nonumber\\
&\cdot
\prod_{i \in I_-} A^-(z_i;\bs{a})
\frac{[z_i+ b_\pa+(M-N)\delta][z_i+ b_\pa+(M-N-1)\delta]}
{[z_i+b_\pa][z_i+b_\pa-\delta]}\nonumber\\
&\cdot
\prod_{\{i, j \} \subset I_+} \frac{[z_i +z_j +2\delta]}{[z_i +z_j]}
\prod_{\{i, j \} \subset I_-} \frac{[z_i +z_j -2\delta]}{[z_i +z_j]}
\prod_{\substack{i \in I_+ \\ j \in I_-} }\frac{[z_i - z_j +2\delta]}{[z_i - z_j]}\nonumber \\
&\cdot
\prod_{\substack{i \in I_+ \\ j \in I_0} }
\frac{[z_i + \delta \pm z_j]}{[z_i \pm z_j]}
\prod_{\substack{i \in I_- \\ j \in I_0} }
\frac{[z_i - \delta \pm z_j]}{[z_i \pm z_j]}
\prod_{\substack{i \in I_+ \\ 1 \le k \le N}}
\frac{[z_i \pm w_k]}{[z_i + \delta \pm w_k]}
\prod_{\substack{i \in I_- \\ 1 \le k \le N}}
\frac{[z_i \pm w_k]}{[z_i - \delta \pm w_k]}\nonumber\\
&=
\sum_{I_+ \cup I_- \cup I_0 = \{1, \ldots, M \}}
\prod_{i \in I_+} A^+(z_i; a_\pa-(M-N)\delta, a_\pb-(M-N)\delta, a_\pc, \ldots, a_\ph) \nonumber\\
&\cdot
\prod_{i \in I_0} A^0(z_i; a_\pa-(M-N)\delta, a_\pb-(M-N)\delta, a_\pc, \ldots, a_\ph|
\bs{\widetilde{b}_0})\nonumber\\
&\cdot
\prod_{i \in I_-} A^-(z_i;a_\pa-(M-N)\delta, a_\pb-(M-N)\delta, a_\pc, \ldots, a_\ph)
\nonumber\\
&\cdot
\prod_{\{i, j \} \subset I_+} \frac{[z_i +z_j +2\delta]}{[z_i +z_j]}
\prod_{\{i, j \} \subset I_-} \frac{[z_i +z_j -2\delta]}{[z_i +z_j]}
\prod_{\substack{i \in I_+ \\ j \in I_-} }\frac{[z_i - z_j +2\delta]}{[z_i - z_j]} \nonumber\\
&\cdot
\prod_{\substack{i \in I_+ \\ j \in I_0} }
\frac{[z_i + \delta \pm z_j]}{[z_i \pm z_j]}
\prod_{\substack{i \in I_- \\ j \in I_0} }
\frac{[z_i - \delta \pm z_j]}{[z_i \pm z_j]}
\prod_{\substack{i \in I_+ \\ 1 \le k \le N}}
\frac{[z_i \pm w_k]}{[z_i + \delta \pm w_k]}
\prod_{\substack{i \in I_- \\ 1 \le k \le N}}
\frac{[z_i \pm w_k]}{[z_i - \delta \pm w_k]}.
\end{align*}
Here $\bs{\widetilde{b}_\pa} = (\widetilde{b}_\pa, \widetilde{b}_\pa, \widetilde{b}_\pa, \widetilde{b}_\pa)$ and 
$\widetilde{b}_\pa=b_\pa+(M-N)\delta$.
By the same computation, we have
\begin{align*}
&(RHS)=
\prod_{p=\pc}^\ph [b_\pa+b_p]_{M-N}
\sum_{K_+ \cup K_- \cup K_0 = \{1, \ldots, N \}}
\prod_{k \in K_+} A^+(w_k;b_\pa+(M-N)\delta, b_\pb+(M-N)\delta, b_\pc, \ldots, b_\ph) \nonumber\\
&\cdot
\prod_{k \in K_0} A^0(w_k;b_\pa+(M-N)\delta, b_\pb+(M-N)\delta, b_\pc, \ldots, b_\ph |
\bs{\widetilde{b}_0})\nonumber\\
&\cdot
\prod_{k \in K_-} A^-(w_k; b_\pa+(M-N)\delta, b_\pb+(M-N)\delta, b_\pc, \ldots, b_\ph)\nonumber\\
&\cdot
\prod_{\{k, l \} \subset K_+} \frac{[w_k +w_l +2\delta]}{[w_k+w_l]} 
\prod_{\{k, l \} \subset K_-} \frac{[w_k+w_l- 2\delta]}{[w_k+w_l]}
\prod_{\substack{k \in K_+ \\ l \in K_-}}\frac{[w_k-w_l +2\delta]}{[w_k-w_l]}\nonumber\\
&\cdot
\prod_{\substack{k \in K_+ \\ l \in K_0}}
\frac{[w_k+\delta \pm w_l ]}{[w_k \pm w_l]}
\prod_{\substack{k \in K_- \\ l \in K_0}}
\frac{[w_k-\delta \pm w_l ]}{[w_k \pm w_l]} 
\prod_{\substack{k \in K_+ \\ 1 \le i \le M}}
\frac{[w_k \pm z_i]}{[w_k +\delta \pm z_i]}
\prod_{\substack{k \in K_- \\ 1 \le i \le M}}
\frac{[w_k \pm z_i]}{[w_k -\delta \pm z_i]}.
\end{align*}
From this,
we obtain  formula \eqref{eq:BC set M not N} by
replacing $(a_\pa-(M-N)\delta, a_\pb-(M-N)\delta, a_\pc, \ldots, a_\ph),
(b_\pa+(M-N)\delta, b_\pb+(M-N)\delta, b_\pc, \ldots, b_\ph)$
with $(a_\ph, a_\pa, a_\pb, \ldots, a_\pg),
(b_\pa, b_\ph, b_\pb, \ldots, b_\pg)$ respectively.
\end{proof}
As a special case $N=0$ of Theorem \ref{Theo:BC set M not N},
we obtain a summation formula on subsets. 
\begin{coro}\label{Coro:BC summation}
Suppose that the balancing condition $\sum_{p=\pa}^\ph a_p = (4-2M)\delta$ 
is satisfied.
If $a_\ph= a_\pa + \delta$,
the following identity holds\,$:$
\begin{align*}
&\sum_{I_+ \cup I_- \cup I_0 = \{1, \ldots, M \}}
\prod_{i \in I_+} A^+(z_i; \bs{a}) \prod_{i \in I_0} A^0(z_i;\bs{a} | \bs{a_\pa})
\prod_{i \in I_-} A^-(z_i;\bs{a}) \nonumber\\
&\cdot
\prod_{\{i, j \} \subset I_+} \frac{[z_i +z_j +2\delta]}{[z_i +z_j]}
\prod_{\{i, j \} \subset I_-} \frac{[z_i +z_j -2\delta]}{[z_i +z_j]}
\prod_{\substack{i \in I_+ \\ j \in I_-} }\frac{[z_i - z_j +2\delta]}{[z_i - z_j]} \nonumber\\
&\cdot
\prod_{\substack{i \in I_+ \\ j \in I_0} }
\frac{[z_i + \delta \pm z_j]}{[z_i \pm z_j]}
\prod_{\substack{i \in I_- \\ j \in I_0} }
\frac{[z_i - \delta \pm z_j]}{[z_i \pm z_j]}\nonumber\\
&=
\prod_{p=\pb}^{\pg}[a_0 +a_p]_{M}.
%\label{eq:BC summation}
\end{align*}
\end{coro}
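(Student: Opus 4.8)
The plan is to obtain this summation as the degenerate case $N = 0$ of Theorem \ref{Theo:BC set M not N}, which has already been established for all pairs of non-negative integers $M \ge N$; since $N=0\le M$ the constraint $M\ge N$ holds automatically. The balancing condition there reads $\sum_{p=\pa}^\ph a_p = (4-2M+2N)\delta$, and setting $N = 0$ turns it precisely into the hypothesis $\sum_{p=\pa}^\ph a_p = (4-2M)\delta$ of the corollary, while the auxiliary constraint $a_\ph = a_\pa + \delta$ is unaffected. Thus the specialization $N = 0$ is admissible, and all that remains is to track how both sides of \eqref{eq:BC set M not N} collapse.

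First I would specialize the left-hand side of \eqref{eq:BC set M not N}. With $N = 0$ the tuple $w = (w_1, \ldots, w_N)$ is empty, so the two products indexed by $1 \le k \le N$ (those of the form $\prod_{i \in I_+,\,1 \le k \le N}$ and $\prod_{i \in I_-,\,1 \le k \le N}$) are empty and equal $1$, and what survives is exactly the left-hand side displayed in the corollary. Next I would specialize the right-hand side: the outer summation now runs over triples $K_+ \cup K_- \cup K_0 = \{1, \ldots, N\} = \emptyset$, contributing only the single empty triple, whose summand is an empty product equal to $1$; meanwhile the prefactor $\prod_{p=\pb}^\pg [a_\pa + a_p]_{M-N}$ becomes $\prod_{p=\pb}^\pg [a_\pa + a_p]_{M}$. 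Hence the right-hand side reduces to the asserted constant $\prod_{p=\pb}^\pg [a_\pa + a_p]_{M}$.

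As an independent sanity check I would examine the case $M = 1$. There the left-hand side is the single-variable sum $A^+(z_1;\bs{a}) + A^-(z_1;\bs{a}) + A^0(z_1;\bs{a}|\bs{a_\pa}) = L(z_1;\bs{a}|\bs{a_\pa})\cdot 1$, which by Theorem \ref{Theo:BC_1}(2) equals $\prod_{p=\pb}^\ph[a_\pa + a_p]/[2a_\pa+\delta]$ under $\sum_{p=\pa}^\ph a_p = 2\delta$. Imposing $a_\ph = a_\pa + \delta$ makes the top factor $[a_\pa + a_\ph] = [2a_\pa + \delta]$ cancel the denominator, leaving $\prod_{p=\pb}^\pg [a_\pa + a_p]$, in agreement with the corollary. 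This also explains the role of the constraint $a_\ph = a_\pa + \delta$: it is exactly what clears the Ruijsenaars\,--\,van Diejen normalization denominator $[2a_\pa+\delta]$.

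Since Theorem \ref{Theo:BC set M not N} supplies the identity directly, there is no analytic obstacle; the only point demanding care is the bookkeeping of empty index sets. I would verify explicitly that the degenerate summation on the right collapses to a single constant term and that no $w$-dependent factor is silently lost on the left, so that the two displayed sides match term by term.
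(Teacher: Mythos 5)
Your proposal is correct and is exactly the paper's argument: the corollary is stated there as the special case $N=0$ of Theorem \ref{Theo:BC set M not N}, with the empty $w$-products on the left and the single empty triple $K_+\cup K_-\cup K_0=\emptyset$ on the right collapsing precisely as you describe. Your additional $M=1$ consistency check against Theorem \ref{Theo:BC_1}(2) is a nice touch but not needed.
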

Considering the multiple principal specialization of \eqref{eq:BC set M not N}
as in Subsection \ref{subsection:Duality transformation of type $BC$ on multi-indices},
we obtain the following transformation formula. 
\begin{theo}\label{Theo:BC duality transformation M not N}
Let $M$ and $N$ be two non-negative integers with $M \ge N$.
Take two multi-indices $\alpha \in \mathbb{N}^m$ and 
$\beta \in \mathbb{N}^n$ with $|\alpha|=M$ and $|\beta|=N$.
For a set of complex parameters $\bs{a} = (a_\pa, a_\pb, \ldots, a_\ph)$,
we assume the balancing condition $\sum_{p=\pa}^\ph a_p =(4-2M+2N)\delta$
and $a_\ph=a_\pa+\delta$.
We define the parameters $\bs{b} =(b_\pa, b_\pb, \ldots, b_\ph)$ by $b_\pa= \delta-a_\ph,
b_\ph=\delta-a_\pa$ and $b_p = \delta - a_p\ (p= \pb, \ldots, \pg)$.
For two sets of variables $x=(x_1, \ldots, x_m)$ and $y=(y_1, \ldots, y_n)$,
the following identity holds\,$:$
\begin{align*}
&\sum_{0 \le \nu \le \mu \le  \alpha}
(-1)^{|\nu|+|\alpha-\mu|}
\prod_{i=1}^m 
A^-(x_i;\bs{a})_{\nu_i} 
A^+(x_i+\mu_i\delta;\bs{a})_{\alpha_i -\mu_i} C_{{\mu_i}-{\nu_i}}(x_i+\nu_i\delta;\bs{a} | \bs{a_0})  \nonumber\\
&\cdot 
\prod_{i=1}^m
\frac{[2x_i+2(\nu_i-1)\delta]}{[2x_i-2\delta]}
\frac{[2x_i+2\mu_i\delta]}{[2x_i+2\alpha_i\delta]}\nonumber\\
&\cdot \prod_{1 \le i< j \le m}
\frac{[(x_i+\nu_i\delta-\delta) \pm (x_j+\nu_j\delta -\delta)]}
{[(x_i-\delta) \pm (x_j-\delta)]}
\frac{[(x_i+\mu_i\delta) \pm (x_j+\mu_j\delta)]}
{[(x_i+\alpha_i \delta) \pm (x_j+\alpha_j\delta)]}\nonumber\\
&\cdot \prod_{1 \le i, j \le m}
\frac{[(x_i-\delta) \pm (x_j+\alpha_j\delta)]}
{[(x_i-\delta) \pm (x_j+\mu_j \delta)]}
\frac{[(x_i+\nu_i\delta-\delta) \pm (x_j+\mu_j\delta)]}
{[(x_i+\nu_i\delta-\delta) \pm (x_j+\alpha_j\delta)]}
\frac{[x_i+ x_j-2\delta]_{\nu_i}}
{[x_i+x_j+(\alpha_j -1)\delta]_{\nu_i}}
\frac{[x_i- x_j-\alpha_j \delta]_{\nu_i}}
{[x_i-x_j+ \delta]_{\nu_i}}\nonumber\\
&\cdot \prod_{1 \le i, j \le m}
\frac{[-x_i- x_j-(\alpha_i+\alpha_j)\delta]_{\alpha_i-\mu_i}}
{[-x_i- x_j-(\alpha_i-1)\delta]_{\alpha_i-\mu_i}}
\frac{[-x_i+ x_j-\alpha_i\delta]_{\alpha_i-\mu_i}}
{[-x_i + x_j + (\alpha_j-\alpha_i+1)\delta]_{\alpha_i-\mu_i}}\nonumber\\
&\cdot
\prod_{\substack{1 \le i \le m \\ 1 \le k \le n}}
\frac{[x_i+y_k+(\beta_k-1)\delta]_{\nu_i}}
{[x_i+y_k-\delta]_{\nu_i}}
\frac{[x_i-y_k]_{\nu_i}}
{[x_i-y_k-\beta_k\delta]_{\nu_i}} \nonumber\\
&\cdot
\prod_{\substack{1 \le i \le m \\ 1 \le k \le n}}
\frac{[-x_i-y_k-(\alpha_i-1)\delta]_{\alpha_i-\mu_i}}
{[-x_i-y_k-(\alpha_i+\beta_k-1)\delta]_{\alpha_i-\mu_i}}
\frac{[-x_i+y_k-(\alpha_i-\beta_k)\delta]_{\alpha_i-\mu_i}}
{[-x_i+y_k-\alpha_i\delta]_{\alpha_i-\mu_i}}\nonumber \\
&=
\prod_{p=\pb}^{\pg}[a_\pa+a_p]_{M-N}
\sum_{0 \le \kappa \le \lambda \le  \beta}
(-1)^{|\kappa|+|\beta-\lambda|}\prod_{k=1}^n
A^-(y_k;\bs{b})_{\kappa_k} 
A^+(y_k+\lambda_k\delta;\bs{b})_{\beta_k -\lambda_k} C_{{\lambda_k}-{\kappa_k}}(y_k+\kappa_k\delta;\bs{b} | \bs{b_0}) \nonumber\\
&\cdot
\prod_{k=1}^n
\frac{[2y_k+2(\kappa_k-1)\delta]}{[2y_k-2\delta]}
\frac{[2y_k+2\lambda_k\delta]}{[2y_k+2\beta_k\delta]}\nonumber\\
&\cdot \prod_{1 \le k<l \le n}
\frac{[(y_k+\kappa_k\delta-\delta) \pm (y_l+\kappa_l\delta -\delta)]}
{[(y_k-\delta) \pm (y_l-\delta)]}
\frac{[(y_k+\lambda_k\delta) \pm (y_l+\lambda_l\delta)]}
{[(y_k+\beta_k \delta) \pm (y_l+\beta_l\delta)]}\nonumber\\
&\cdot \prod_{1 \le k, l \le n}
\frac{[(y_k-\delta) \pm (y_l+\beta_l\delta)]}
{[(y_k-\delta) \pm (y_l+\lambda_l \delta)]}
\frac{[(y_k+\kappa_k\delta-\delta) \pm (y_l+\lambda_l\delta)]}
{[(y_k+\kappa_k\delta-\delta) \pm (y_l+\beta_l\delta)]}
\frac{[y_k+ y_l-2\delta]_{\kappa_k}}
{[y_k+y_l+(\beta_l -1)\delta]_{\kappa_k}}
\frac{[y_k- y_l-\beta_l \delta]_{\kappa_k}}
{[y_k-y_l+ \delta]_{\kappa_k}}\nonumber\\
&\cdot \prod_{1 \le k, l \le n}
\frac{[-y_k- y_l-(\beta_k+\beta_l)\delta]_{\beta_k-\lambda_k}}
{[-y_k- y_l-(\beta_k-1)\delta]_{\beta_k-\lambda_k}}
\frac{[-y_k+ y_l-\beta_k\delta]_{\beta_k-\lambda_k}}
{[-y_k + y_l + (\beta_l-\beta_k+1)\delta]_{\beta_k-\lambda_k}}\nonumber\\
&\cdot
\prod_{\substack{1 \le k \le n \\ 1 \le i \le m}}
\frac{[y_k+x_i+(\alpha_i-1)\delta]_{\kappa_k}}
{[y_k+x_i-\delta]_{\kappa_k}}
\frac{[y_k-x_i]_{\kappa_k}}
{[y_k-x_i-\alpha_i\delta]_{\kappa_k}} \nonumber\\
&\cdot
\prod_{\substack{1 \le k \le n \\ 1 \le i \le m}}
\frac{[-y_k-x_i-(\beta_k-1)\delta]_{\beta_k-\lambda_k}}
{[-y_k-x_i-(\beta_k+\alpha_i-1)\delta]_{\beta_k-\lambda_k}}
\frac{[-y_k+x_i-(\beta_k-\alpha_i)\delta]_{\beta_k-\lambda_k}}
{[-y_k+x_i-\beta_k\delta]_{\beta_k-\lambda_k}},
\end{align*}
where $\bs{a_\pa}=(a_\pa, a_\pa, a_\pa, a_\pa), \bs{b_\pa}=(b_\pa, b_\pa, b_\pa, b_\pa)$
and $C_{\sigma}(z; \bs{a}| \bs{c})$ is defined in \eqref{eq:def C}.
\end{theo}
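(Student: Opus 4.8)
The plan is to derive Theorem \ref{Theo:BC duality transformation M not N} from the subset identity of Theorem \ref{Theo:BC set M not N} by precisely the multiple principal specialization used in Subsection \ref{subsection:Duality transformation of type $BC$ on multi-indices} to pass from \eqref{eq:BC set N2} to Theorem \ref{Theo:BC multi1}; no new analytic input is needed beyond \eqref{eq:BC set M not N} itself. First I would rewrite \eqref{eq:BC set M not N} in sign-sequence form, expressing the left-hand sum over triples $(I_+, I_0, I_-)$ partitioning $\{1, \ldots, M\}$ as a sum over $\epsilon \in \{\pm, 0\}^{\{1, \ldots, M\}}$ with $A^0$ evaluated at $\bs{c}=\bs{a_\pa}$, and the right-hand sum over $\{1, \ldots, N\}$ as a sum over $\epsilon \in \{\pm, 0\}^{\{1, \ldots, N\}}$ with $\bs{c}=\bs{b_\pa}$, retaining the scalar prefactor $\prod_{p=\pb}^\pg [a_\pa+a_p]_{M-N}$ on the right.

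Next I would apply the specialization $z=(x)_\alpha$ with $|\alpha|=M$ to the $z$ variables and $w=(y)_\beta$ with $|\beta|=N$ to the $w$ variables, as in \eqref{eq:multiple principal specialization}. The combinatorial analysis is then carried out block by block and is insensitive to the cardinality mismatch $M \neq N$: within each block the argument already given in Subsection \ref{subsection:Duality transformation of type $BC$ on multi-indices} shows that a sign sequence contributes a nonzero term only if it avoids the patterns $+0$, $0-$, and $+\!*\!-$ of \eqref{eq:condition of sign}; that the admissible sequences are parametrized by multi-indices $0 \le \nu \le \mu \le \alpha$ (respectively $0 \le \kappa \le \lambda \le \beta$) together with the positions $(\xi_{i,p})$ of the patterns $+-$; and that each $+-$ pair contributes a factor $-1$ while acting on all other $x$-, $y$-variables exactly as $00$. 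Summing over the $\xi$-positions inside a block of length $\mu_i-\nu_i$ produces exactly the function $C_{\mu_i-\nu_i}(x_i+\nu_i\delta; \bs{a}\,|\,\bs{a_\pa})$ of \eqref{eq:def C}, while the remaining factors assemble into the explicit product $F^\alpha_{\mu,\nu}(x;y)$ computed there. The same evaluation applied to the right-hand side, now with the swapped parameters $\bs{b}$ (so that $b_\pa=\delta-a_\ph$, $b_\ph=\delta-a_\pa$, and $b_p=\delta-a_p$ otherwise) and $\bs{c}=\bs{b_\pa}$, yields the corresponding sum over $0 \le \kappa \le \lambda \le \beta$ with $C_{\lambda_k-\kappa_k}(y_k+\kappa_k\delta; \bs{b}\,|\,\bs{b_\pa})$.

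The only feature absent from the proof of Theorem \ref{Theo:BC multi1} is the scalar prefactor $\prod_{p=\pb}^\pg [a_\pa+a_p]_{M-N}$ inherited from Theorem \ref{Theo:BC set M not N}; being independent of the summation indices, it simply multiplies the specialized right-hand side and reappears unchanged in the final formula. Collecting the factored prefactors — the products over $1 \le i < j \le m$ and $1 \le k < l \le n$, the ratios $[2x_i+2\mu_i\delta]/[2x_i-2\delta]$ and their analogues, and the cross terms mixing $x$ and $y$ — and matching them term by term against the displayed identity completes the derivation. I expect the main obstacle to be purely bookkeeping rather than conceptual: one must verify that, under the mismatch $M \neq N$, the $F^\alpha_{\mu,\nu}(x;y)$ factors and the cross terms between the $x$- and $y$-blocks specialize to exactly the displayed products, a lengthy but routine check that proceeds verbatim as in Subsection \ref{subsection:Duality transformation of type $BC$ on multi-indices}.
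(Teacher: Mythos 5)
Your proposal is correct and follows essentially the same route as the paper: the authors obtain Theorem \ref{Theo:BC duality transformation M not N} precisely by applying the multiple principal specialization $z=(x)_\alpha$, $w=(y)_\beta$ of Subsection \ref{subsection:Duality transformation of type $BC$ on multi-indices} to the subset identity \eqref{eq:BC set M not N}, with the same pattern analysis producing the sums over $0\le\nu\le\mu\le\alpha$ and $0\le\kappa\le\lambda\le\beta$, the factors $C_\sigma$, and the prefactor $\prod_{p=\pb}^{\pg}[a_\pa+a_p]_{M-N}$ carried through unchanged.
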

If we set $\beta=0$ in Theorem \ref{Theo:BC duality transformation M not N},
we obtain the following summation formula.
\begin{coro}\label{Coro:BC summation for multiple}
Let $M$ be a non-negative integer.
Take a multi-index $\alpha \in \mathbb{N}^m$ with $|\alpha|=M$.
For a set of complex parameters $\bs{a} = (a_\pa, a_\pb, \ldots, a_\ph)$,
we assume the balancing condition $\sum_{p=\pa}^\ph a_p =(4-2M)\delta$
and $a_\ph=a_\pa+\delta$.
For a set of variables $x=(x_1, \ldots, x_m)$,
the following identity holds\,$:$
\begin{align*}
&\sum_{0 \le \nu \le \mu \le  \alpha}
(-1)^{|\nu|+|\alpha-\mu|}
\prod_{i=1}^m 
A^-(x_i;\bs{a})_{\nu_i} 
A^+(x_i+\mu_i\delta;\bs{a})_{\alpha_i -\mu_i} C_{{\mu_i}-{\nu_i}}(x_i+\nu_i\delta;\bs{a} | \bs{a_0})  \nonumber\\
&\cdot 
\prod_{i=1}^m
\frac{[2x_i+2(\nu_i-1)\delta]}{[2x_i-2\delta]}
\frac{[2x_i+2\mu_i\delta]}{[2x_i+2\alpha_i\delta]}\nonumber\\
&\cdot \prod_{1 \le i< j \le m}
\frac{[(x_i+\nu_i\delta-\delta) \pm (x_j+\nu_j\delta -\delta)]}
{[(x_i-\delta) \pm (x_j-\delta)]}
\frac{[(x_i+\mu_i\delta) \pm (x_j+\mu_j\delta)]}
{[(x_i+\alpha_i \delta) \pm (x_j+\alpha_j\delta)]}\nonumber\\
&\cdot \prod_{1 \le i, j \le m}
\frac{[(x_i-\delta) \pm (x_j+\alpha_j\delta)]}
{[(x_i-\delta) \pm (x_j+\mu_j \delta)]}
\frac{[(x_i+\nu_i\delta-\delta) \pm (x_j+\mu_j\delta)]}
{[(x_i+\nu_i\delta-\delta) \pm (x_j+\alpha_j\delta)]}
\frac{[x_i+ x_j-2\delta]_{\nu_i}}
{[x_i+x_j+(\alpha_j -1)\delta]_{\nu_i}}
\frac{[x_i- x_j-\alpha_j \delta]_{\nu_i}}
{[x_i-x_j+ \delta]_{\nu_i}}\nonumber\\
&\cdot \prod_{1 \le i, j \le m}
\frac{[-x_i- x_j-(\alpha_i+\alpha_j)\delta]_{\alpha_i-\mu_i}}
{[-x_i- x_j-(\alpha_i-1)\delta]_{\alpha_i-\mu_i}}
\frac{[-x_i+ x_j-\alpha_i\delta]_{\alpha_i-\mu_i}}
{[-x_i + x_j + (\alpha_j-\alpha_i+1)\delta]_{\alpha_i-\mu_i}}\nonumber\\
&=
\prod_{p=\pb}^{\pg}[a_\pa+a_p]_{M},
\end{align*}
where $\bs{a_\pa}=(a_\pa, a_\pa, a_\pa, a_\pa)$.
\end{coro}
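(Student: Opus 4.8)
The plan is to obtain Corollary \ref{Coro:BC summation for multiple} directly as the specialization of Theorem \ref{Theo:BC duality transformation M not N} in which the second multi-index is taken to be trivial. Concretely, I would set $n=0$, so that $\beta$ is the empty multi-index and $N=|\beta|=0$. With $N=0$ the balancing condition $\sum_{p=\pa}^\ph a_p=(4-2M+2N)\delta$ of the theorem becomes exactly $\sum_{p=\pa}^\ph a_p=(4-2M)\delta$, and the hypothesis $a_\ph=a_\pa+\delta$ is carried over unchanged, so the hypotheses of the theorem match those of the corollary. Since the theorem is already established, the only remaining work is to check that both sides degenerate to the asserted forms under this specialization.

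First I would simplify the right-hand side. With $\beta=0$ every $\beta_k$ is zero and the constraint $0\le\kappa\le\lambda\le\beta$ forces $\kappa=\lambda=0$, so the double summation over $(\kappa,\lambda)$ collapses to a single term. In that term each factor $A^-(y_k;\bs{b})_{\kappa_k}$, $A^+(y_k+\lambda_k\delta;\bs{b})_{\beta_k-\lambda_k}$ and $C_{\lambda_k-\kappa_k}(\cdots)$ reduces to the empty product or to $C_0=1$ (using $F_0=1$ from the three-term recurrence for $C_\sigma$), and every ratio carrying a subscript $\kappa_k$ or $\beta_k-\lambda_k$ is an empty product equal to $1$; the same holds for the prefactors $[2y_k+2(\kappa_k-1)\delta]/[2y_k-2\delta]$ and $[2y_k+2\lambda_k\delta]/[2y_k+2\beta_k\delta]$. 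Hence the entire double sum on the right equals $1$, and the surviving prefactor $\prod_{p=\pb}^{\pg}[a_\pa+a_p]_{M-N}$ becomes $\prod_{p=\pb}^{\pg}[a_\pa+a_p]_{M}$ because $N=0$.

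Next I would specialize the left-hand side. When $n=0$ there are no $y$ variables, so the two blocks of products indexed by $1\le k\le n$, namely those involving $x_i\pm y_k$, are empty and equal $1$. Every remaining factor depends only on $x$, $\alpha$, $\mu$, $\nu$ and $\bs{a}$, and is literally the corresponding factor of Corollary \ref{Coro:BC summation for multiple}; in particular the operator products $A^{\pm}$ and the kernel polynomials $C_{\mu_i-\nu_i}(x_i+\nu_i\delta;\bs{a}|\bs{a_\pa})$ are unchanged. Equating the two specialized sides yields the claimed identity.

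I expect no genuine obstacle, since the statement is a clean degeneration of an already-proven formula; the only point requiring care is bookkeeping, namely verifying that the $(\kappa,\lambda)$-summation together with all $y$-dependent products collapse to $1$ rather than producing spurious factors, and that the subscript $M-N$ in the prefactor correctly becomes $M$. As a consistency check one may observe that the same result is produced by applying the multiple principal specialization \eqref{eq:multiple principal specialization} to the summation on subsets in the case $N=0$, that is, to Corollary \ref{Coro:BC summation}, which is precisely the mechanism underlying Theorem \ref{Theo:BC duality transformation M not N}.
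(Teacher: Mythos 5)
Your proposal is correct and coincides with the paper's own derivation: the paper obtains this corollary precisely by setting $\beta=0$ in Theorem \ref{Theo:BC duality transformation M not N}, whereupon the $(\kappa,\lambda)$-sum collapses to the single term $\kappa=\lambda=0$ and the prefactor $\prod_{p=\pb}^{\pg}[a_\pa+a_p]_{M-N}$ becomes $\prod_{p=\pb}^{\pg}[a_\pa+a_p]_{M}$. Your bookkeeping of the empty products and of $C_0=1$ is exactly the verification required.
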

It would be an interesting problem to clarify how Corollaries \ref{Coro:BC summation}
and \ref{Coro:BC summation for multiple}  are related to the Ruijsenaars\,--\,van Diejen difference operators in higher dimensions.

\section*{Acknowledgment}
This research is partially supported by Grant-in-Aid for Scientific Research (C) 25400026 
and (B) 15H03626.

\end{document}